\documentclass[1 leqno,11pt]{amsart}
\usepackage{amssymb, amsmath,latexsym,amsfonts,amsbsy, amsthm,mathtools,graphicx,,color}
\usepackage{float}
\usepackage{hyperref}

\numberwithin{equation}{section}

\allowdisplaybreaks

\let\al=\alpha

\let\d=\delta
\let\e=\varepsilon

\let\la=\lambda

\let\f=\frac

\let\na=\nabla
\let\th=\theta
\let\pa=\partial

\newcommand{\beq}{\begin{equation}}
\newcommand{\eeq}{\end{equation}}
\newcommand{\ben}{\begin{eqnarray}}
\newcommand{\een}{\end{eqnarray}}
\newcommand{\beno}{\begin{eqnarray*}}
\newcommand{\eeno}{\end{eqnarray*}}

\newtheorem{theorem}{Theorem}[section]

\newtheorem{lemma}[theorem]{Lemma}
\newtheorem{proposition}[theorem]{Proposition}

\newtheorem{remark}[theorem]{Remark}

\begin{document}

\title{} 
\title[instability of the unbounded shear layer]{Long-Wave Spectral Instability of  Shear Layers for the Compressible Euler Equations}

\author[C. Wang]{Chao Wang}
\address{School of Mathematical Sciences\\ Peking University\\ Beijing 100871, China}
\email{wangchao@math.pku.edu.cn}

\author[Y. Wang]{Yuxi Wang}
\address{School of Mathematics, Sichuan University, Chengdu 610064, China}
\email{wangyuxi@scu.edu.cn}

\author[W. Wu]{Wenzhi Wu}
\address{School of Mathematical Sciences\\ Peking University\\ Beijing 100871, China}
\email{wuwenzhi@stu.pku.edu.cn}

\author[Z. Zhang]{Zhifei Zhang}
\address{School of Mathematical Sciences, Peking University, Beijing 100871,  China}
\email{zfzhang@math.pku.edu.cn}
\maketitle

\begin{abstract}
We study the long-wave spectral instability of the two-dimensional compressible Euler equations around smooth monotone shear layers. We construct decaying half-line solutions of the compressible Rayleigh equation through a long-wave expansion and derive a second-order expansion of the matching Wronskian. For every fixed Mach number $m>0$, we prove the existence of unstable modes  for  sufficiently small wavenumbers. For $m<\sqrt2$, this holds for a  class of profiles, with $c_i$ tending to a positive constant as $\alpha\to0$. At $m=\sqrt{2}$, the profile $U_s(Y)=\tanh Y$ admits an unstable mode with $c\to0$ and $c_i$ of order $\alpha^{1/3}$. For $m>\sqrt2$, the same profile remains unstable, with $c\to c_*(m)\in(0,1)$ and $c_i>0$ of order $\alpha$. The corresponding temporal growth rates are of order $\alpha$, $\alpha^{4/3}$ and $\alpha^2$, respectively, showing a change in the long-wave instability scaling at $m=\sqrt2$.
In the zero-thickness limit, the supercritical unstable eigenvalue approaches the real axis, consistently with the stability results for supersonic compressible vortex sheets in \cite{CS1,CS2}.

\end{abstract}
\section{Introduction}

  This paper investigates the instability of a two-dimensional compressible flow for all Mach numbers  $m>0 $. The motion of the fluid is described by the dimensionless compressible Euler equations in the domain $(X,Y)\in \mathbb{R}\times\mathbb{R}$:
\begin{align}\label{eq:NS}
\left\{
\begin{aligned}
&\pa_t\rho^e+\na\cdot(\rho^e U^e)=0,\\
&\rho^e\big(\pa_tU^e+U^e\cdot \na U^e\big)+\na P(\rho^e)=0,\\
&\lim_{Y\to\pm \infty}U_2^e(Y)=0.
\end{aligned}
\right.
\end{align}
Here $\rho^e$ is the density, $U^e=(U^e_1, U_2^e)$ is the velocity, and  $P(\rho^e)$ is the pressure. We consider a  shear-layer profile connecting two uniform states at infinity, which means that $(\rho_s^e,U_s^e)(Y)=(1,U_s(Y),0)$ satisfies
\beno
\lim_{Y\to \pm \infty}U_s(Y)=\pm 1.
\eeno
It is easy to check that $(1,U_s(Y),0)$ is a steady solution to \eqref{eq:NS}.
To understand the (in)stability properties of the above shear layer  profile, we study the compressible Euler equations \eqref{eq:NS} linearized around $(1,U_s(Y),0)$. The linearized system on the whole domain $(X,Y)\in \mathbb{R}\times\mathbb{R}$ is given by
\begin{align}\label{eq:LCNS}
	\left\{
	\begin{aligned}
&\partial_t\varrho+U_s\partial_X\varrho+\nabla\cdot U=0,\\
		&\partial_tU+U_s\partial_X U+\partial_Y U_s(Y)(U_2,0)+m^{-2}\nabla \varrho=0,\\
		&\lim_{Y\to \pm \infty} U_2(Y)=0.
	\end{aligned}
	\right.
\end{align}
Here $\varrho =\rho^e-1$, $U=(U_1, U_2)= U^e -(U_s(Y),0)$, and $m=\frac{1}{\sqrt{P'(1)}}$ is the Mach number.

Our goal is to seek unstable solutions of the linearized system $\eqref{eq:LCNS}$ in the form 
\begin{align*}
	(\varrho,U_1, U_2)(t,X,Y)=(\rho,u,v)(Y)e^{\mathrm i\alpha(X-ct)}+\mathrm c.\mathrm c, 
\end{align*}
which means that $(\rho(Y),u(Y),v(Y))$ satisfies the following system
\begin{align}\label{eq:LCNS-Y}
	\left\{
	\begin{aligned}
		&\mathrm i\alpha(U_s-c)\rho+\mathrm{div}_\alpha(u,v)=0,\\
		&\mathrm i\alpha(U_s-c)u+v\partial_Y U_s+\mathrm i\alpha m^{-2}\rho=0,\\
		&\mathrm i\alpha(U_s-c)v+m^{-2}\partial_Y\rho=0,
	\end{aligned}
	\right.
\end{align}
where $\mathrm{div}_\alpha(u,v)=\mathrm i\alpha u+\partial_Yv$ and $v$ satisfies the boundary condition
\begin{align}\label{BC: u(0)=v(0)=0}
\lim_{Y\to\pm\infty}v(Y)=0.
\end{align}
 If for some $c\in\mathbb{C}$ with positive imaginary part $c_i > 0$ and wavenumber $\al>0$, the boundary value problem \eqref{eq:LCNS-Y} with \eqref{BC: u(0)=v(0)=0} has a non-trivial solution, then the shear layer profile $(1,U_s(Y),0)$ is spectrally unstable.  The goal of this paper is to construct an unstable solution to system \eqref{eq:LCNS-Y}-\eqref{BC: u(0)=v(0)=0} for all Mach numbers $m>0$.
 
By the first equation of \eqref{eq:LCNS-Y}, there exists a generalized stream function $\varphi(Y)$ such that 
\begin{align}\nonumber
	\partial_Y\varphi(Y)=u(Y)+\big(U_s(Y)-c\big)\rho(Y), \quad-i\alpha\varphi(Y)=v.
\end{align}
Then the system \eqref{eq:LCNS-Y} can be reduced to a Rayleigh-type system
\begin{align}\label{eq:GOS}
	\left\{
	\begin{aligned}
		&(U_s-c)\Lambda\varphi-\partial_Y(A^{-1}\partial_Y U_s)\varphi=0,\\
		&\rho=m^2A^{-1}(Y)\big(\partial_Y U_s\varphi-(U_s-c)\partial_Y\varphi\big),
	\end{aligned}
	\right.
\end{align}
where 
\begin{align*}
\Lambda=\partial_Y(A^{-1}\partial_Y)-\alpha^2,\quad A(Y)=1-m^2(U_s(Y)-c)^2,
\end{align*}
and the boundary condition \eqref{BC: u(0)=v(0)=0} is reduced to
\begin{align}\label{BC}
\varphi(\pm\infty)=0.
\end{align}
From system \eqref{eq:GOS}, we know that once we construct $\varphi$, we get the solution to $\rho$ according to the second equation in \eqref{eq:GOS}. For this reason, we only need to construct an unstable solution to $\varphi$ with the boundary condition \eqref{BC}.
\medskip

We begin by reviewing some results on the (in)stability of inviscid incompressible flows. Rayleigh \cite{Rayleigh} first established a necessary condition for instability: the presence of an inflection point in the shear flow. Fjørtoft \cite{Fjortoft} provided a refinement of Rayleigh’s condition. Lin established a sufficient instability criterion for a class of shear profiles \cite{Lin 2003}. His proof used a shooting argument for the Rayleigh equation. We refer the reader to \cite{FSV-1, FSV-2, Grenier, Lin-2004} for results on nonlinear instability.
\medskip

As for the (in)stability of inviscid compressible flows, we refer the reader to the instability result of Lees–Lin \cite{LL}, where Rayleigh's criterion for incompressible flows was extended to inviscid subsonic flows. Coulombel and Secchi established the linear stability of two-dimensional supersonic compressible vortex sheets for $m>\sqrt2$ in \cite{CS1} and subsequently proved their nonlinear stability by a Nash--Moser iteration in \cite{CS2}. They also analyzed the transition at the critical Mach number $m=\sqrt2$ in \cite{CS}.
 \medskip

We briefly review recent progress in the (in)stability theory of boundary layer flows. For shear profiles that are spectrally unstable at the inviscid level, Grenier and Nguyen \cite{GN} showed that the small viscosity does not suppress the instability, and the flow remains unstable at the viscous level. In contrast, for monotonic concave shear profiles, which are linearly stable in the inviscid setting, Grenier, Guo, and Nguyen \cite{GGN-DMJ} rigorously constructed unstable temporal modes by introducing the Rayleigh–Airy iterative scheme. These modes, referred to as Tollmien–Schlichting (T–S) waves, play an important role in the early stages of boundary layer transition.
The theoretical foundation of T–S waves originates from the pioneering works of Heisenberg, Tollmien, Schlichting, and C.~C.~Lin, among others \cite{DR, Lin, Sch}. The mathematical existence of neutral curves for such modes was established by Chen, Wu, and Zhang \cite{CWZ}; see also \cite{BG-2023} for related results. The construction of T–S waves has been extended to incompressible magnetohydrodynamic (MHD) flows in \cite{LYZ}.
For a broader discussion of nonlinear stability and instability in boundary layer theory, we refer to \cite{GMM-duke, GMM-arxiv, GN-arxiv, GM, GZ, CWZ1, CWZ2, GN} and references therein. Results concerning T–S waves in subsonic compressible flows can be found in \cite{YZ, MWWZ}, while the study
of Mack modes in supersonic boundary
layers is presented in \cite{MWWZ1}.
\medskip

The results above concern either discontinuous vortex sheets or wall-bounded shear flows. The long-wave spectral stability of smooth compressible shear layers in the whole plane, especially at and above the critical Mach number $m=\sqrt2$, is less understood. In this paper, we show that monotone shear layers admit unstable long-wave modes in all Mach numbers $m>0$. The critical and supercritical instabilities are weaker and are determined by a profile-dependent correction to the leading dispersion relation.

\medskip

Throughout this paper, we impose the following structural assumptions on the shear layer profile $U_s(Y)\in C^2(\mathbb{R})$: we assume that  there exists a constant $\theta>0$ such that
\begin{align}\label{eq:S-A}
\begin{split}
	&U_s(0)=0,\quad\lim_{Y\to\pm\infty}U_s(Y)=\pm 1,\quad\partial_YU_s(Y)>0,\\
	&\sup_{Y\geq0}|e^{\theta Y}\partial_Y^k(U_s(Y)-1)|<+\infty,\quad k=0,1,2,\\
	&\sup_{Y\leq0}|e^{-\theta Y}\partial_Y^k(U_s(Y)+1)|<+\infty,\quad k=0,1,2.
\end{split}
\end{align}
In particular, $U_s(Y)=\tanh Y$ satisfies the assumptions in \eqref{eq:S-A}.
\medskip

Now, we state our main result.  
 \begin{theorem}\label{thm:main1}
 Let $m>0$. Suppose that $U_s(Y)$ satisfies the structural assumptions \eqref{eq:S-A} when $m<\sqrt2$, and take $U_s(Y)=\tanh Y$ when $m\geq\sqrt2$.
Then there exists
a small constant $\alpha_0>0$ such that,
for every $0<\alpha<\alpha_0$, we can find $c(\al)$ such that there exists a non-trivial solution $(\varrho,U_1,U_2)(t,X,Y)$ to the system \eqref{eq:LCNS} of the form 
\begin{align*}
(\varrho,U_1,U_2)(t,X,Y)
=
(\rho,u,v)(Y)
e^{\mathrm{i}\alpha(X-ct)}
+\mathrm{c.c.},
\end{align*}
where $(\rho,u,v)
\in
W^{1,\infty}(\mathbb R)
\times W^{1,\infty}(\mathbb R)
\times W^{2,\infty}(\mathbb R)$ and solves the system \eqref{eq:LCNS-Y} subject to the boundary condition
\eqref{BC: u(0)=v(0)=0}.
Moreover, $\al$ and $c$ satisfy the following properties
\begin{enumerate}
\item For $0<m<\sqrt 2$, we have
\begin{align*}
c(\alpha)=i\sigma(m)+O(\alpha),
\end{align*}
where
\begin{align}\label{def: sigma(m)}
\sigma(m)=
\left(
\frac{\sqrt{4m^2+1}-(m^2+1)}{m^2}
\right)^{1/2}>0.
\end{align}
In particular,
\begin{align*}
c_r(\alpha)=O(\alpha),
\qquad
c_i(\alpha)=\sigma(m)+O(\alpha)>0,
\end{align*}

\item For $m=\sqrt{2}$, we have
\begin{align*}
c(\alpha)
=
e^{\pi\mathrm{i}/6}
\left(\frac{\alpha}{3}\right)^{1/3}
+
o(\al^\f13).
\end{align*}
In particular,
\begin{align*}
c_r(\alpha)
&=
\frac{\sqrt{3}}{2}
\left(\frac{\alpha}{3}\right)^{1/3}
+
o(\al^\f13),\quad 
c_i(\alpha)
=
\frac{1}{2}
\left(\frac{\alpha}{3}\right)^{1/3}
+
o(\al^\f13)>0.
\end{align*}

\item For $m>\sqrt{2}$, we have
\begin{align*}
c(\alpha)
=
c_*(m)+K_m\alpha
+
o(\al),
\end{align*}
where
\begin{align}
&c_*:=c_*(m)
=
\left(
\frac{m^2+1-\sqrt{4m^2+1}}{m^2}
\right)^{1/2}
\in(0,1),\label{c*}\\
&K_m
=
\frac{1-c_*^2}
{8c_*^2\sqrt{4m^2+1}}
\left[
2\pi c_*
+
i
\left(
4+2c_*
\log\frac{1+c_*}{1-c_*}
\right)
\right],\label{def: Km}\\
\nonumber
&\mbox{with}\quad \operatorname{Re}K_m>0,
\quad
\operatorname{Im}K_m>0.
\end{align}
In particular,
\begin{align*}
c_r(\alpha)
&=
c_*(m)+\operatorname{Re}K_m\,\alpha
+
o(\al),\quad c_i(\alpha)
=
\operatorname{Im}K_m\,\alpha
+
o(\al)>0.
\end{align*}
\end{enumerate}
\end{theorem}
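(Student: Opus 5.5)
The plan is to reduce the problem to the Rayleigh-type equation \eqref{eq:GOS} with boundary condition \eqref{BC}. I will construct two solutions $\varphi_\pm(Y;\alpha,c)$ that decay as $Y\to\pm\infty$ respectively. An unstable mode then corresponds to a zero, with $\operatorname{Im}c>0$, of the matching Wronskian
\begin{align*}
\mathcal W(\alpha,c)=A^{-1}\big(\varphi_+'\varphi_--\varphi_+\varphi_-'\big)(0).
\end{align*}
Once $\varphi$ is found, $\rho$ comes from the second line of \eqref{eq:GOS}, $v=-i\alpha\varphi$, and $u=\partial_Y\varphi-(U_s-c)\rho$. The stated regularity follows from $U_s\in C^2$ and from the exponential decay of the constructed $\varphi_\pm$.

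\textbf{Construction of $\varphi_\pm$.} At $\alpha=0$, the function $\varphi_0=U_s-c$ solves \eqref{eq:GOS} exactly, since $\Lambda\varphi_0=\partial_Y(A^{-1}\partial_YU_s)$ when $\alpha=0$. A second solution is $\varphi_0\int A/(U_s-c)^2$. For $\pm Y\to\infty$, the equation behaves like $A_\pm^{-1}\varphi''-\alpha^2\varphi=0$, where $A_\pm=1-m^2(\pm1-c)^2$. This suggests the ansatz
\begin{align*}
\varphi_\pm=(U_s-c)\,e^{\mp\alpha\sqrt{A_\pm}\,Y}\big(1+\alpha\psi_{1,\pm}+\alpha^2\psi_{2,\pm}+\dots\big),
\end{align*}
with the branch of $\sqrt{A_\pm}$ chosen so that $\operatorname{Re}(\alpha\sqrt{A_\pm})>0$.

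The correctors $\psi_{j,\pm}$ are found by variation of constants, using the two $\alpha=0$ solutions. The remainder is handled by a contraction in an exponentially weighted space on each half-line. The exponential decay in \eqref{eq:S-A} makes the source terms $A^{-1}-A_\pm^{-1}$ and $U_s\mp1$ integrable. This step needs $c$ to stay away from the zeros of $A$ and away from $U_s(\mathbb R)\cap\{c\}$ in a way that is uniform in $\alpha$. When $c$ is close to the real axis (cases (2) and (3)), the critical layer $U_s(Y_c)=c_r$ must be treated with $\operatorname{Im}c>0$ kept positive and with estimates uniform as $c_i\to0^+$.

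\textbf{Wronskian expansion.} Substituting the expansions gives
\begin{align*}
\mathcal W=\alpha\,\mathcal D(c)+\alpha^2\mathcal E(c)+O(\alpha^3),\qquad
\mathcal D(c)=(1-c)^2\sqrt{A_+}+(1+c)^2\sqrt{A_-}
\end{align*}
(up to normalization), which is the vortex-sheet dispersion relation. Here $\mathcal E$ contains profile integrals of the form $\int\big(A/(U_s-c)^2-\text{limits}\big)\,dY$.

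Solving $\mathcal D=0$ gives the three regimes:
\begin{enumerate}
\item For $m<\sqrt2$, $\mathcal D$ has a simple root $c=i\sigma(m)$ with $\sigma(m)$ as in \eqref{def: sigma(m)}. The implicit function theorem, or Rouché's theorem in a small disc, gives $c(\alpha)=i\sigma+O(\alpha)$ for any profile satisfying \eqref{eq:S-A}.
\item For $m>\sqrt2$, the root $c_*$ in \eqref{c*} is real, i.e. a neutral mode. Then $c=c_*-\alpha\,\mathcal E(c_*)/\mathcal D'(c_*)+o(\alpha)$. For $U_s=\tanh Y$, I will compute $\mathcal E(c_*+i0)$ explicitly. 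The critical-layer pole contributes, through the Plemelj formula, the $\pi$ term in \eqref{def: Km}, and the principal value contributes the $\log\frac{1+c_*}{1-c_*}$ term. Showing $\operatorname{Im}K_m>0$ then confirms that the perturbed root lies in the upper half-plane. I also need to check that this root is consistent with the branch choice, namely that the $\varphi_\pm$ constructed with $c_i>0$ actually decay.
\item For $m=\sqrt2$, $\mathcal D$ has a degenerate (triple) root at $c=0$. Balancing $\mathcal D(c)\sim c^3$ against $\alpha\,\mathcal E(0)$ gives $c^3\sim\alpha/3$. Among the three cube roots, the branch $e^{\pi i/6}$ is selected by the requirement $\operatorname{Im}c>0$ together with the decay conditions. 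The root is again obtained by Rouché's theorem in a disc of radius $\ll\alpha^{1/3}$.
\end{enumerate}

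\textbf{Main obstacle.} The hardest part is cases (2) and (3). There the leading-order root is real, so the whole instability comes from the second-order term $\mathcal E$, which involves a singular critical-layer integral. I need remainder bounds in the half-line construction that stay uniform as $c$ approaches the real axis, and a remainder that is genuinely $o(\alpha^2)$ (respectively small compared with $\alpha\,c$-scale terms near $c=0$). Near $c=0$ at $m=\sqrt2$, $A_\pm\to0$ as well, so the far-field exponents degenerate. I would first handle this with a rescaled ansatz in which the exponent $\alpha\sqrt{A_\pm}$ is kept exact rather than expanded, and then verify the cubic balance directly.
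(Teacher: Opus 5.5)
Your architecture matches the paper's. You factor out $e^{\mp\alpha\sqrt{A_\pm}Y}$ exactly, build the correctors from the two $\alpha=0$ solutions in the profile weight $e^{\mp\theta Y}$, match at $Y=0$, and evaluate the order-$\alpha^2$ integral for $\tanh$ by Plemelj. Using Rouch\'e instead of the paper's contraction is fine; it only needs $C^0$ remainder bounds plus analyticity in $c$ for $c_i>0$.

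The gap is the step you yourself flag as the main obstacle: you give no mechanism for it, and the premises you state are wrong.

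\textbf{Critical-layer estimates.} Estimates uniform as $c_i\to0^+$ do not exist. For $m>\sqrt2$, $\tanh$, and $c$ near $c_*$, take the first corrector $\chi_{+,1}$ (your $(U_s-c)\psi_{1,+}$ up to a constant). Its derivative $\partial_Y\chi_{+,1}$ is of exact size $|\log c_i|$ at $Y_c$, because the coefficient of $\log(U_s-c)$ does not vanish there. Moreover, $\psi_{1,+}$ itself is of size $c_i^{-1}$ at $Y_c$, so for $c_i\sim\alpha$ your multiplicative expansion $1+\alpha\psi_{1,+}+\cdots$ is not even ordered near the critical layer. What works is the following:
\begin{itemize}
\item estimate the products $(U_s-c)\psi_{n,\pm}$, not the $\psi_{n,\pm}$;
\item use the divergence form \eqref{CR11} and integrate by parts (Lemma \ref{con:G}), which turns the double pole $(U_s-c)^{-2}$ into a simple one.
\end{itemize}
Each order then costs a factor $|\log c_i|^2$. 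The series converges because $\alpha|\log c_i|^2\ll1$ when $c_i\gtrsim\alpha$, and the Wronskian remainder is $O(\alpha^3|\log c_i|^6)$ rather than $O(\alpha^3)$, which still suffices.

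\textbf{Sonic points.} ``Keeping $c$ away from the zeros of $A$'' is impossible for $m\ge\sqrt2$. There $A$ nearly vanishes at the two sonic points $U_s=c_r\pm1/m$, which lie in $(-1,1)$; for $m>\sqrt2$ one has $1/m=(1-c_*^2)/\sqrt{2(1+c_*^2)}<1-c_*$. Once the exponential is factored out, the sources contain $A^{-1}$ and $\partial_Y(A^{-1})$. These points therefore need the same treatment: the paper splits $1/A$ and $1/(A(U_s-c))$ into simple poles (Lemma \ref{con:CR4}).

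\textbf{Errors in the dispersion analysis.}

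\emph{The leading coefficient.} Your $\mathcal D$ has $A_+$ and $A_-$ interchanged. The order-$\alpha$ coefficient of $(1-m^2c^2)^{-1}W$ is $(1-c)^2/\sqrt{A_+}+(1+c)^2/\sqrt{A_-}=4cP_m(c)/D_m(c)$, i.e.\ $(1-c)^2\sqrt{A_-}+(1+c)^2\sqrt{A_+}$ up to the factor $\sqrt{A_+A_-}$. Your expression is not a renormalization of this one. At $m=\sqrt2$ it has a simple zero at $c=0$ (derivative $-8\mathrm i$), so neither $\mathrm i\sigma(m)$, nor $c_*(m)$, nor the triple root comes out of it.

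\emph{The diagnosis at $m=\sqrt2$.} There $A_{\pm\infty}\to1-2=-1$, not $0$. The only far-field degeneration is $\operatorname{Re}\gamma_\pm\to0$. This happens for every $m>1$ near the real axis and is harmless in the profile weight. What is genuinely special at $m=\sqrt2$ is that $U_s(0)=0$ and $c\to0$, so the matching point lies within $O(|c|)$ of the critical layer. The paper's coefficients at $Y=0$ then blow up, with $B_\pm\sim c^{-1}$ and $D_\pm\sim c^{-2}$. One needs the exact factorization $W=c^2(\cdots)$, together with the cancellation of the $1/c$ contributions of the two half-line integrals in $\mathcal I_{m,U_s}$.

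\emph{The cubic balance.} The correct balance is $c^3=\mathrm i\alpha/3$, not $c^3\sim\alpha/3$; it follows from $P_{\sqrt2}\approx-6c^2$, $D_{\sqrt2}\approx-2\mathrm i$ and $\mathcal I_{\sqrt2,U_s}\to-4$. This equation has two roots in the upper half-plane, $e^{\pi\mathrm i/6}\delta$ and $e^{5\pi\mathrm i/6}\delta$, exchanged by the symmetry $c\mapsto-\bar c$ of the odd profile. Decay does not select between them; you simply take the first.
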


 \begin{remark}
The expression defining $\sigma(m)$ in (1) has a removable singularity at
$m=0$. Indeed, $
\lim_{m\to0}\sigma(m)=1.
$
We therefore define $\sigma(0)=1$ by continuity. Thus, the value of $\sigma(0)$ is
understood in this limiting sense, which corresponds to the incompressible limit of the problem.
\end{remark}

\begin{remark}
  The regime of $ c $  indicates that the solution of the system \eqref{eq:LCNS} grows in time like
\begin{align*}
|(\varrho,U_1, U_2)(t,X,Y)| \sim e^{ \alpha c_i t} .
\end{align*}
Theorem \ref{thm:main1} shows that the growth rate $\alpha c_i$ is of order $\alpha$, $\alpha^{4/3}$, and $\alpha^2$ for $m<\sqrt2$, $m=\sqrt2$, and $m>\sqrt2$, respectively.
\end{remark}
\begin{remark}
Let $ U_k(Y) = U_s(k Y) $, where $ U_s(Y)=\tanh Y $. Thus, as $ k \to \infty $, 
$U_k'\rightharpoonup2\delta_0$ in the sense of distributions, where $\delta_0$ is the Dirac measure. 
If $W_k$ and $W$ denote the Wronskians associated with $U_k$ and $U_s$, respectively, then
\begin{align*}
W_k(\alpha,c)
=
kW\left(\frac{\alpha}{k},c\right).
\end{align*}
Thus, the reduced wavenumber is $\varepsilon=\alpha/k$. The
long-wave limit corresponds to fixed $k$ and $\alpha\to0$, whereas
the zero-thickness limit corresponds to fixed $\alpha$ and $k\to\infty$.
To connect with vortex sheets, we fix $\alpha>0$ and take $k\to\infty$, then the main theorem gives
\begin{align*}
c_i&\longrightarrow \sigma(m)>0,
&&m<\sqrt{2},\\
c_i&\sim
\frac12\left(\frac{\alpha}{3k}\right)^{1/3}\longrightarrow0,
&&m=\sqrt{2},\\
c_i&\sim
\operatorname{Im}K_m\,\frac{\alpha}{k}\longrightarrow0,
&&m>\sqrt{2}.
\end{align*}
Hence, the instability persists for $m<\sqrt{2}$, while it
degenerates at and above $m=\sqrt{2}$. The  case $m=\sqrt{2}$ complements
\cite{CS}, and the zero-thickness limit of  case $m>\sqrt{2}$  is consistent with the
stability results for supersonic compressible vortex sheets in
\cite{CS1,CS2}.
\end{remark}

\begin{remark}
For $m\geq\sqrt{2}$, we choose $U_s(Y)=\tanh Y$ to compute
$\mathcal I_{m,U_s}(c)$ explicitly. This is the profile-dependent
coefficient in the order-$\alpha^2$ term of the Wronskian expansion
\eqref{equ: dispersion}. For $m<\sqrt{2}$, the leading-order equation
$P_m(c)=0$ already has an unstable root for shear profiles satisfying
\eqref{eq:S-A}. At $m=\sqrt{2}$, the relevant leading-order root is
degenerate, while for $m>\sqrt{2}$ it is real. In these two cases, the sign of $\operatorname{Re}\mathcal I_{m,U_s}(c)$ determines whether the corrected root lies in the upper half-plane. More generally,
similar instability results hold for any admissible shear profile
satisfying
\begin{align*}
\lim_{\substack{c\to c_*+i0^+}}
\operatorname{Re}\mathcal I_{m,U_s}(c)<0.
\end{align*}
\end{remark}

\begin{remark}
Both the present modes and Mack modes are related to Rayleigh-type
equations in supersonic compressible flows. Mack modes rely on a
wall-acoustic mechanism in boundary layers \cite{MWWZ1}, whereas the
present long-wave modes arise by matching decaying solutions at infinity of a whole-space shear layer. Thus, they have different
instability mechanisms and frequency scalings.
\end{remark}

\begin{remark}

  The condition $ U_s(0) = 0 $ in \eqref{eq:S-A} is not necessary. Given that $ U_s(-\infty) = -1 $ and $ U_s(+\infty) = 1 $, we can find a point $Y_0 $ such that $ U_s(Y_0) = 0 $. We then define a new function $\tilde{U}_s(Y) = U_s(Y +Y_0)$, which satisfies the assumptions in \eqref{eq:S-A}.

 \end{remark}

{\bf Notations:}  
\begin{itemize}
\item Throughout this paper, $C>0$ denotes a generic constant independent of $\alpha$ and $c$ in the relevant parameter region. It may depend on the fixed Mach number $m$ and the profile $U_s$, and its value may change from line to line.

\item For two nonnegative quantities $X$ and $Y$, we write $X\lesssim Y$ if $X\leq CY$. The notation $X\gtrsim Y$ is defined similarly, and $X\approx Y$ means that both $X\lesssim Y$ and $X\gtrsim Y$ hold.

\item 
$
\mathbb G
=
\left\{
\alpha\in\mathbb R:0<\alpha\ll 1
\right\}.
$
\item 
$
\mathbb H_<
=
\left\{
(\alpha,c)\in\mathbb G\times\mathbb C:
\left|c-\mathrm{i}\sigma(m)\right|
\leq S\alpha
\right\},\quad m<\sqrt2.
$
\item
$\mathbb H_=
=
\left\{
(\alpha,c)\in\mathbb G\times\mathbb C:
\left|
c-e^{\pi\mathrm{i}/6}\delta
\right|
\leq
S\delta^2|\log\delta|^2
\right\},
\qquad
\delta
=
\left(\frac{\alpha}{3}\right)^{1/3},\quad m=\sqrt 2.
$

\item $\mathbb H_>
=
\left\{
(\alpha,c)\in\mathbb G\times\mathbb C:
\left|
c-\bigl(c_*(m)+K_m\alpha\bigr)
\right|
\leq
S\alpha^2|\log \al|^6
\right\},\quad m>\sqrt 2.$

\item $\mathbb{H}=
\begin{cases}
    \mathbb{H}_<,\quad 0<m<\sqrt{2},\\
     \mathbb{H}_=,\quad m=\sqrt{2},\\
       \mathbb{H}_>,\quad m>\sqrt{2}.
\end{cases}$

\end{itemize}
\section{Sketch of the proof}
\label{sec:proof-strategy}
The spectral problem is reduced to constructing an unstable solution of the compressible Rayleigh equation. More precisely, we seek a non-trivial function $\varphi$ and an eigenvalue $c$ with $c_i>0$ such that
\begin{equation}
\begin{cases}
(U_s-c)\Lambda\varphi
-\partial_Y(A^{-1}\partial_YU_s)\varphi=0,
& Y\in\mathbb R,\\
\varphi(\pm\infty)=0,
\end{cases}
\label{R11p}
\end{equation}
where
\begin{align*}
\Lambda
&=
\partial_Y(A^{-1}\partial_Y)-\alpha^2,
\quad 
A(Y)=
1-m^2(U_s(Y)-c)^2.
\end{align*}

Instead of solving this problem directly on the whole line, we
construct non-trivial solutions $\varphi_+$ and $\varphi_-$ that decay
at $+\infty$ and $-\infty$, respectively. They solve the half-line
problems
\begin{equation}
\begin{cases}
(U_s-c)\Lambda\varphi_\pm
-\partial_Y(A^{-1}\partial_YU_s)\varphi_\pm=0,
& Y\in\mathbb R_\pm,\\
\varphi_\pm(\pm\infty)=0.
\end{cases}
\label{R16}
\end{equation}

The main  task is to construct these decaying half-line
solutions uniformly in the long-wave regime and to obtain accurate
expansions at $Y=0$. Once $\varphi_\pm$ are
constructed, they give rise to a non-trivial global solution of
\eqref{R11p} if and only if their Wronskian vanishes at $Y=0$, namely,
\begin{equation}
W(\alpha,c)
\coloneqq
\begin{vmatrix}
\varphi_+(0) & \varphi_-(0)\\
\partial_Y\varphi_+(0) & \partial_Y\varphi_-(0)
\end{vmatrix}
=0.
\label{D11}
\end{equation}
This matching condition \eqref{D11} is referred to as the dispersion relation.

Therefore, the remainder of the
proof is divided into two main parts: the construction and
estimation of the decaying half-line solutions, and the analysis of
the zeros of $W(\alpha,c)$ in the three Mach-number ranges.

\subsection{Decaying half-line solutions}
To construct the solution to the Rayleigh equation  \eqref{R16}, we set
\begin{align*}
A_{\pm\infty}
=
1-m^2(1\mp c)^2,
\quad
\gamma_\pm
=
A_{\pm\infty}^{1/2},
\quad
\beta_\pm
=
\alpha\gamma_\pm,
\end{align*}
where the square-root branches are chosen so that $\operatorname{Re}\beta_\pm>0.$

To extract the exact far-field factors, we seek the
half-line solutions in the form
\begin{align}\label{def: varphi}
\varphi_\pm(Y)
&=
e^{\mp\beta_\pm Y}\chi_\pm(Y),\quad
\chi_\pm
=
\sum_{n=0}^{\infty}
(\pm\beta_\pm)^n\chi_{\pm,n},
\quad
\chi_{\pm,0}=U_s-c,
\end{align}
Here, $\chi_{\pm,n}$ satisfies the equation $Ray_0[\chi_{\pm,n}]=F_{\pm,n}$, and  $F_{\pm, n+1}$ contains some terms composed of $\chi_{\pm, n-1}$ and $\pa_Y\chi_{\pm, n-1}$, and see \eqref{CR4}-\eqref{def: F_n+2} for more details.

When $m<\sqrt{2}$ and $c$ is close to
$\mathrm{i}\sigma(m)$, the factors $U_s-c$ and $A(Y)$ remain uniformly
separated from zero. At and above the critical Mach number, the factors $U_s-c$ and $A(Y)$
 become small near critical points
\begin{align*}
U_s=c_r,
\qquad
U_s-c_r=\pm\frac{1}{m}.
\end{align*}
The apparent algebraic singularities are reduced to logarithmic
losses by the divergence structure of $Ray_0$. The series \eqref{def: varphi} is well-defined due to
\begin{align*}
\alpha |\log c_i|^2\ll1,
\end{align*}
 holds in all the parameter regimes considered below. 
 
\subsection{The expansion of the dispersion relation}

Expanding $\varphi_\pm(0)$ and $\partial_Y\varphi_\pm(0)$ to second
order in $\alpha$ gives a dispersion relation of the form
\begin{align*}
(1-m^2c^2)^{-1}W(\alpha,c)
=
\frac{4\alpha cP_m(c)}{D_m(c)}
+
\alpha^2\mathcal I_{m,U_s}(c)
+
\mathcal R(\alpha,c),
\end{align*}
where the precise definitions of $P_m(c)$, $D_m(c)$ and $\mathcal I_{m,U_s}(c)$ are given in \eqref{def: Pm, Dm} and \eqref{def: ImU_s}.

The denominator $D_m(c)$ stays away from zero in the regions under
consideration. The first term is determined entirely by the two end
states of the shear layer. In particular, it is independent of the
detailed shape of $U_s$. The second-order coefficient
$\mathcal I_{m,U_s}$ contains the profile-dependent contribution.

\subsection{Solve the dispersion relation in the three Mach-number regimes}

We  describe how the three asymptotic laws follow from the root
structure of $P_m$.
\begin{enumerate}
\item {\underline {The case $m<\sqrt{2}$.}}
The polynomial $P_m$ has a simple root in the upper half-plane,
\begin{align*}
c=\mathrm{i}\sigma(m),
\end{align*}
where $\sigma(m)$ is given in \eqref{def: sigma(m)}.

Since this root is simple and the second-order term remains bounded,
the dispersion relation can be solved in a neighborhood of
$\mathrm{i}\sigma(m)$. A contraction argument gives
\begin{align*}
c(\alpha)
=
\mathrm{i}\sigma(m)+O(\alpha).
\end{align*}
Thus, $c_i$ remains bounded away from zero as $\alpha\to0$.

\item{\underline{The case $m=\sqrt{2}$.}}
At the critical Mach number, the relevant root reaches the origin.
More precisely,
\begin{align*}
P_{\sqrt{2}}(c)
=
-6c^2+O(c^4).
\end{align*}
For $U_s(Y)=\tanh Y$, the second-order coefficient satisfies
\begin{align*}
\mathcal I_{\sqrt{2},U_s}(c)
=
-4+O\bigl(|c||\log c_i|\bigr).
\end{align*}
The leading balance is therefore
\begin{align*}
c^3
=
\frac{\mathrm{i}\alpha}{3}
+
\text{higher-order terms}.
\end{align*}
Selecting the root in the upper half-plane yields
\begin{align*}
c(\alpha)
=
e^{\pi\mathrm{i}/6}
\left(\frac{\alpha}{3}\right)^{1/3}
+
o\left(\alpha^{1/3}\right).
\end{align*}

\item{\underline{The case $m>\sqrt{2}$.}}
In this case, $P_m$ has a simple real root $c_*(m)$ given in \eqref{c*}.
Therefore, the leading-order relation  gives a neutral real phase
velocity. The profile-dependent second-order term determines whether
this root moves into the upper or lower half-plane.
For $U_s(Y)=\tanh Y$, the integral $\mathcal I_{m,U_s}(c)$ has the limiting value $-\xi_m$, where
\begin{align*}
\xi_m
=
4
+
2c_*
\left(
\log\frac{1+c_*}{1-c_*}
-
\pi i
\right).
\end{align*}
It determines the sign of the imaginary part of the first correction. Balancing the linear
variation of the leading term at $c=c_*$ with the second-order term
gives
\begin{align*}
c(\alpha)
=
c_*(m)+K_m\alpha+o(\alpha),\quad \operatorname{Im}K_m>0,
\end{align*}
where $K_m$ is given in \eqref{def: Km}.
Thus, the  correction moves the real leading root
into the upper half-plane and produces a weak long-wave instability.
\end{enumerate}

In each case, the approximate root determines the natural rescaling
of $c$. We use $c-\mathrm{i}\sigma=O(\alpha)$ for
$m<\sqrt{2}$, $c=O(\alpha^{1/3})$ for $m=\sqrt{2}$, and
$c-c_*=O(\alpha)$ for $m>\sqrt{2}$. The remainder estimates
yield a contraction mapping in the corresponding complex
neighborhood and hence an exact zero of $W(\alpha,c)$.

The half-line construction is given in Section~3, the Wronskian
expansion is derived in Section~4, the required uniform estimates are
proved in Section~5, and the three dispersion relations are solved
in Section~6.

\section{The compressible Rayleigh equation}
In this section, we first construct the solution $\varphi_{\pm}(Y), Y\in \mathbb{R}_{\pm}$ of the compressible Rayleigh equation:
\begin{align}\label{CR50}
\left\{
\begin{aligned}
&(U_s-c)\Lambda\varphi_\pm-\partial_Y(A^{-1}\partial_YU_s)\varphi_\pm=0,\ Y\in \mathbb{R}_\pm ,\\
   &  \varphi_{\pm}(\pm\infty) =0,
\end{aligned}
\right.
\end{align}
where
\begin{align*}
\Lambda=\partial_Y(A^{-1}\partial_Y)-\alpha^2,\quad A(Y)=1-m^2(U_s(Y)-c)^2.
\end{align*}
We introduce
\begin{align}\label{def: A_pm}
 A_{\pm\infty}:=\lim_{Y\to \pm\infty} A(Y)=1-m^2(1\mp c)^2.
\end{align}
If $(\alpha,c)\in \mathbb{H}$, we know $\operatorname{Im} (1\mp c)^2 \in \mathbb{R}_\mp$, which implies $\operatorname{Im}  A_{\pm\infty} \in \mathbb{R}_\pm$. Moreover, we define
\begin{align}
\beta_\pm=\gamma_\pm \alpha,\quad \gamma_\pm =A_{\pm\infty}^\frac{1}{2},\quad \text{with}\quad \arg(\gamma_+)\in (0,\frac{\pi}{2}) ,\quad \arg(\gamma_-)\in (-\frac{\pi}{2},0) . 
\end{align}
 

 \medskip

First, we give some facts that are used frequently.
\begin{lemma}

\begin{enumerate}
\item For $0<m<\sqrt 2$ and $(\alpha,c)\in \mathbb{H}_<$, $|U_s(Y)-c|$ and $|A(Y)|$ have upper and lower bounds,  i.e.,  there exists $C>1$ such that
\begin{align}\label{est1: U_s-c, A}
 0<C^{-1}\leq |U_s(Y)-c| \leq C, \quad 0<C^{-1}\leq |A(Y)|\leq C\quad \text{ for any } Y\in \mathbb{R}.
\end{align}
\item For $m\geq \sqrt{2}$ and $(\alpha,c)\in \mathbb{H}_\geq$,  there exists $C>1$ such that
\begin{align}\label{est2: U_s-c, A}
 0<C^{-1}c_i\leq |U_s(Y)-c| \leq C, \quad 0<C^{-1}c_i\leq |A(Y)|\leq C\quad \text{ for any } Y\in \mathbb{R}.
\end{align}
\item For $m>0$ and $(\alpha,c)\in \mathbb{H}$, $|\gamma_\pm|$ have upper and lower bounds,  i.e.,  there exists $C>1$ such that
\begin{align}\label{est: gamma}
  0<C^{-1}\leq |\gamma_\pm|\leq C .
\end{align}
\end{enumerate}
\end{lemma}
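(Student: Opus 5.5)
The three parts rest on two facts. The range of $U_s$ is the open interval $(-1,1)$, because $U_s$ is strictly increasing with limits $\pm1$. And the parameter sets $\mathbb H$ confine $c$ to explicit small neighbourhoods of $\mathrm i\sigma(m)$, of $e^{\pi\mathrm i/6}\delta$, or of $c_*+K_m\alpha$. Upper bounds are trivial throughout: $|U_s-c|\le 1+|c|$ and $|A|\le 1+m^2(1+|c|)^2$, and $|c|$ is bounded on $\mathbb H$. So only the lower bounds need work.

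For (1), take $(\alpha,c)\in\mathbb H_<$. Then $c_i\ge \sigma(m)-S\alpha\ge \sigma(m)/2$ for $\alpha$ small, and since $U_s$ is real, $|U_s-c|\ge c_i\ge\sigma/2$. For $A$, write $A=1-m^2(U_s-c)^2$ and compare with the function obtained by setting $c=\mathrm i\sigma$, namely $A_0(Y)=1-m^2(U_s^2-\sigma^2)+2\mathrm i m^2\sigma U_s$.
\begin{itemize}
\item If $U_s\neq0$, then $\operatorname{Im}A_0=2m^2\sigma U_s\neq 0$.
\item If $U_s$ is near $0$, then $\operatorname{Re}A_0\approx 1+m^2\sigma^2>0$.
\end{itemize}
Since $U_s$ ranges over $[-1,1]$, which is compact, $|A_0|$ is bounded below uniformly in $Y$ by the continuous positive function $\max\{|2m^2\sigma u|,\,|1-m^2(u^2-\sigma^2)|\}$ on $u\in[-1,1]$. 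Its only possible zeros would need $u=0$ and $1+m^2\sigma^2=0$, which is impossible. The perturbation satisfies $|A-A_0|\lesssim|c-\mathrm i\sigma|\le S\alpha$, so $|A|\ge C^{-1}$ for $\alpha$ small.

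For (2), take $(\alpha,c)\in\mathbb H_\ge$. Again $|U_s-c|\ge c_i$. For $A$, factor
\[
A=(1-m(U_s-c))(1+m(U_s-c)),
\]
and each factor has imaginary part of modulus $mc_i$, so each has modulus at least $mc_i$. This alone gives only $|A|\gtrsim c_i^2$. To upgrade to $c_i$, check that the two factors cannot be small at the same $Y$: their sum is $2$, so one of them always has modulus at least $1$. Hence $|A|\ge mc_i$.

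It also has to be confirmed that $c_i>0$ with $c_i$ of the stated size on $\mathbb H_\ge$.
\begin{itemize}
\item On $\mathbb H_=$, $\operatorname{Im}(e^{\pi\mathrm i/6}\delta)=\delta/2$, which dominates the radius $S\delta^2|\log\delta|^2$.
\item On $\mathbb H_>$, $\operatorname{Im}K_m\alpha$ dominates the radius $S\alpha^2|\log\alpha|^6$.
\end{itemize}
This uses $\operatorname{Im}K_m>0$, which is immediate from \eqref{def: Km} because $\log\frac{1+c_*}{1-c_*}>0$ and $c_*\in(0,1)$.

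For (3), $|\gamma_\pm|=|A_{\pm\infty}|^{1/2}$. Here $A_{\pm\infty}=(1-m(1\mp c))(1+m(1\mp c))$ is continuous in $c$, so it suffices to show it is nonzero at the limiting value of $c$ and then use smallness of $\alpha$.
\begin{itemize}
\item For $m<\sqrt2$, at $c=\mathrm i\sigma$ one has $\operatorname{Im}A_{\pm\infty}=\pm 2m^2\sigma\neq0$.
\item For $m=\sqrt2$, at $c=0$ one has $A_{\pm\infty}=1-m^2=-1\neq0$.
\item For $m>\sqrt2$, at $c=c_*$ one has $A_{\pm\infty}=1-m^2(1\mp c_*)^2$. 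This could vanish only if $1\mp c_*=1/m$, that is $c_*=\pm(1-1/m)$.
\end{itemize}
The last case is the main obstacle, and I would settle it by direct algebra with \eqref{c*}. Suppose $c_*=1-1/m$, so $m^2c_*^2=(m-1)^2$. From \eqref{c*}, $m^2c_*^2=m^2+1-\sqrt{4m^2+1}$, so $\sqrt{4m^2+1}=2m$. This is false, so $A_{\pm\infty}\neq 0$ at $c=c_*$. Continuity in $c$ on the shrinking sets $\mathbb H$ then gives the two-sided bound for $|\gamma_\pm|$, with the lower bound holding uniformly for $\alpha<\alpha_0$.
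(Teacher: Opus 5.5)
Your argument is correct in all three parts. The paper states this lemma without proof, so there is no step-by-step argument to compare against. The closest related computation is Lemma~\ref{dis 1}. There, for $m\ge\sqrt2$, the paper substitutes $m^2=2(1+c_*^2)/(1-c_*^2)^2$ and obtains
\[
A_{+\infty}\to-\frac{(1-c_*)^2}{(1+c_*)^2},\qquad A_{-\infty}\to-\frac{(1+c_*)^2}{(1-c_*)^2}.
\]
This gives both the nonvanishing you need in part (3) and the explicit leading values of $\gamma_\pm$ that are later used in the Wronskian analysis. Your contradiction argument (that $c_*=\pm(1-1/m)$ would force $\sqrt{4m^2+1}=2m$) proves only nonvanishing, but that is all this lemma requires.

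One simplification is worth noting. Your factorization from part (2) works for every $m>0$. The factors $1\mp m(U_s-c)$ have imaginary parts $\pm mc_i$ and sum to $2$, so $|A(Y)|\ge mc_i$ always. Likewise, $1\mp m(1\mp c)$ has imaginary part $\pm mc_i$, which gives $|A_{\pm\infty}|\ge mc_i$. On $\mathbb H_<$ you already have $c_i\ge\sigma/2$. This yields the lower bounds in part (1) and in the subcritical case of part (3) directly, without the compactness argument for $A_0$. The separate check at the limiting value of $c$ is genuinely needed only in part (3) for $m\ge\sqrt2$, where $c_i\to0$.
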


\medskip

To simplify the notation, we introduce two operators:
\begin{align*}
&Ray[\varphi]\coloneqq (U_s-c)\Lambda\varphi-\partial_Y(A^{-1}\partial_YU_s)\varphi,\\
&{Ray}_0[\varphi]\coloneqq (U_s-c)\partial_Y(A^{-1}\partial_Y\varphi)-\partial_Y(A^{-1}\partial_YU_s)\varphi,
\end{align*}
which have the following relationship 
\[
Ray[\varphi]={Ray}_0[\varphi]-\al^2(U_s-c)\varphi.
\]
Moreover,  the operator ${Ray}_0[\cdot]$ has the following equivalent formula:
\begin{equation}
\begin{aligned}
{Ray}_0[\varphi]= \partial_Y \Big( 
\frac{{(U_s-c)}^2}{A(Y)}
\partial_Y  ( 
\frac{\varphi}{U_s-c} )
\Big).
\label{CR11}
\end{aligned}
\end{equation}
We look for a solution ${\varphi_{\pm}}$ to system \eqref{CR50} with the following formula:
\begin{equation}
\begin{aligned}
\varphi_\pm (Y)=e^{\mp \beta_\pm Y}\chi_\pm (Y).
\label{R21}
\end{aligned}
\end{equation}
Thus, by direct calculation, we obtain
\begin{equation}
\begin{aligned}
(\Lambda+\alpha^2)\varphi_\pm &=\partial_Y \big( 
A^{-1} e^{\mp \beta_\pm Y}
\big( \mp \beta \chi_\pm
+\partial_Y \chi_\pm
\big)
\big)\\
&=\partial_Y \big( 
 e^{\mp \beta_\pm Y}
\big( \mp \beta A^{-1}\chi_\pm
+ A^{-1}\partial_Y\chi_\pm
\big)
\big)\\
&= e^{\mp \beta_\pm Y}\big(
\mp \beta_\pm
\big( \mp \beta A^{-1}\chi_\pm
+ A^{-1}\partial_Y\chi_\pm
\big)
+\partial_Y \big( \mp \beta A^{-1}\chi_\pm
+ A^{-1}\partial_Y\chi_\pm
\big)
\big)\\
&= e^{\mp \beta_\pm Y}\big(
\beta_\pm^2 A^{-1}\chi_\pm
\mp \beta_\pm \big( 
2A^{-1}\partial_Y\chi_\pm+\partial_Y(A^{-1})\chi_\pm
\big)
+(\Lambda+\alpha^2)\chi_\pm 
\big).
\label{RR21}
\end{aligned}
\end{equation}
Bring \eqref{R21} and \eqref{RR21} into \eqref{CR50} to get the equation of $\chi_\pm$:
\begin{align*}
Ray_0[\chi_{\pm}]=	
\pm \beta_\pm (U_s-c)\big( 2A^{-1}\partial_Y\chi_\pm+\partial_Y(A^{-1})\chi_\pm\big) - \beta_\pm^2 (U_s-c)\big(A^{-1}\chi_\pm \big)
+\alpha^2 (U_s-c)\chi_\pm. 
\end{align*}

To solve for $\chi_\pm(Y)$, we seek a solution of the following form
\begin{equation}
\begin{aligned}
\chi_{\pm}(Y)=\sum_{n=0}^{\infty} {(\pm  \beta_\pm)}^n \chi_{\pm,n}(Y).
\label{CR3}
\end{aligned}
\end{equation}

Thus, we seek  $\chi_{\pm,n}(Y) ,n\geq 0$ satisfying
\begin{equation}
\begin{aligned}
\begin{cases}
    {Ray}_0[\chi_{\pm,0}]=0,\\
    {Ray}_0[\chi_{\pm,1}]=F_{\pm,1},\\
 {Ray}_0[\chi_{\pm,n+2}]
= F_{\pm,n+2},\quad n \geq 0,
\end{cases}
\label{CR4}
\end{aligned}
\end{equation}
with the boundary conditions 
\begin{align}\label{BC: chi_n}
\chi_{\pm,n}(\pm \infty)=\pa_Y\chi_{\pm,n}(\pm \infty)= 0,\quad n\geq 1,
\end{align}
where
\begin{align}
&F_{\pm,1}=(U_s-c)\big( 2A^{-1}\partial_Y\chi_{\pm,0}+\partial_Y(A^{-1})\chi_{\pm,0}\big),\label{def: F_1}\\
&F_{\pm,n+2}=(U_s-c)\big( 2A^{-1}\partial_Y\chi_{\pm,n+1}+\partial_Y(A^{-1})\chi_{\pm,n+1}\big)\label{def: F_n+2}\\
\nonumber
&\qquad\qquad- (U_s-c)\big(A^{-1}-\gamma_\pm^{-2} \big)\chi_{\pm,n},\quad n\geq 0.
\end{align}

For $n=0$, it is easy to get $\chi_{\pm,0}=U_s-c$ as a solution to ${Ray}_0[\chi_{\pm,0}]=0$.
By \eqref{CR4} and \eqref{CR11}, we derive the expression for $\chi_{\pm,n}$.
Formally, we have constructed a non-trivial solution $\varphi_{\pm}\in C^2(\mathbb{R}_\pm)$ to the Rayleigh equation \eqref{CR50}. 
It remains to establish the uniform estimates for $\chi_{\pm,n}$ to make sure \eqref{CR3} is well-defined.

The proof of the convergence of \eqref{CR3} is given in Section 5, and we first present the result as follows.
\begin{proposition}\label{pro: varphi}
Let $m>0$ and $(\alpha,c)\in \mathbb{H}$. Then there exists a non-trivial solution $\varphi_{\pm}\in C^2(\mathbb{R}_\pm)$ to the Rayleigh equation \eqref{CR50}. Moreover,
\begin{equation}\label{con:HR}
\begin{aligned}
&\varphi_\pm (Y)=e^{\mp \beta_\pm Y}\chi_\pm (Y),\quad \chi_\pm (Y)=\sum_{n=0}^{\infty} {(\pm  \beta_\pm)}^n \chi_{\pm,n}(Y),\\
&\|\chi_\pm (Y)-(U_s(Y)-c)\|_{Y_{\th,\pm}}\lesssim \al\la,\quad \|\chi_{n,\pm}\|_{Y_{\th,\pm}}\leq (C \la)^n,\quad n\geq 1
\end{aligned}
\end{equation}
where $\lambda=1$ when $m<\sqrt{2}$, and $\lambda=|\log c_i|^2$ when $m\geq \sqrt{2}$.
Here the definition of the norm $\|\cdot\|_{Y_{\th}}$ is given in \eqref{Y_th}.
\end{proposition}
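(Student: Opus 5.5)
The plan is to solve each equation in \eqref{CR4} explicitly with the divergence form \eqref{CR11}, and then run an induction on $n$ in a weighted norm. The weight must absorb the exponential decay of the sources at $\pm\infty$ and the (logarithmic) singular behaviour near the critical points. We treat $\chi_{+,n}$ on $\mathbb R_+$; the case $\chi_{-,n}$ is symmetric.

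\textbf{Solution formula.} Writing $\chi_{+,n}=(U_s-c)\psi_n$ and integrating \eqref{CR11} twice from $+\infty$ with the boundary conditions \eqref{BC: chi_n} gives
\begin{align*}
\chi_{+,n}(Y)=(U_s(Y)-c)\int_Y^{\infty}\frac{A(Z)}{(U_s(Z)-c)^2}\int_Z^{\infty}F_{+,n}(W)\,dW\,dZ .
\end{align*}
The inner integral converges only if $F_{+,n}$ decays. We verify this first.
\begin{itemize}
\item For $n=1$, $\chi_{+,0}=U_s-c$, so $F_{+,1}=(U_s-c)\big(2A^{-1}U_s'+\partial_Y(A^{-1})(U_s-c)\big)$ decays like $e^{-\theta Y}$ by \eqref{eq:S-A}.
\item For $n\ge 0$, the term $(A^{-1}-\gamma_+^{-2})\chi_{+,n}$ in \eqref{def: F_n+2} decays because $A\to A_{+\infty}=\gamma_+^2$ exponentially. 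This is exactly why $\beta_+$ was extracted.
\end{itemize}
Accordingly, the norm $\|\cdot\|_{Y_{\theta,+}}$ should control $\sup_{Y\ge0}e^{\theta' Y}(|f|+|\partial_Y f|)$ for some $\theta'<\theta$, possibly together with a weight such as $1+|\log|U_s-c||$ near critical layers. Since $\chi_{+,n}(+\infty)=0$ is required, the double integral also gives exponential decay of $\chi_{+,n}$ itself.

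\textbf{Case $m<\sqrt2$.} By \eqref{est1: U_s-c, A} and \eqref{est: gamma}, every factor $A^{\pm1}$, $(U_s-c)^{\pm1}$ and $\partial_Y(A^{-1})$ is bounded, and the derivative terms carry $e^{-\theta Y}$ decay. The two $\int_Y^\infty$ integrations of exponentially decaying functions cost only constants, so
\begin{align*}
\|\chi_{+,n+2}\|\le C\big(\|\chi_{+,n+1}\|+\|\chi_{+,n}\|\big).
\end{align*}
Induction gives $\|\chi_{+,n}\|\le C^n$. Because $|\beta_+|\lesssim\alpha$, the series \eqref{CR3} converges for $\alpha<\alpha_0$, and the tail satisfies $\|\chi_+-(U_s-c)\|\lesssim\alpha$.

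To conclude that $\varphi_+$ solves \eqref{CR50}, we sum the equations: the series converges in $C^1$ with weights, and the second derivative comes from the equation itself. Non-triviality follows because $\chi_+\approx U_s-c\ne0$.

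\textbf{Case $m\ge\sqrt2$ (the main obstacle).} Now $U_s-c$ and $A$ can be as small as $c_i$ near the critical points $U_s=c_r$ and $U_s-c_r=\pm1/m$, and the kernel $A/(U_s-c)^2$ together with the factor $A^{-1}$ in $F$ is singular.

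I would exploit the divergence structure. Write $(U_s-c)\big(2A^{-1}\partial_Y\chi+\partial_Y(A^{-1})\chi\big)$ as $\partial_Y\big((U_s-c)A^{-1}\chi\big)$ plus lower-order terms, so that one integration $\int_Z^\infty$ is absorbed without generating a singularity. In the remaining integrals, $\int |A|^{-1}dY$ and $\int |U_s-c|^{-1}dY$ near a simple zero (where $U_s'>0$) are each of size $|\log c_i|$. This is how algebraic singularities become logarithmic losses. The factor $(U_s-c)$ in front of the outer integral compensates $(U_s-c)^{-2}$ up to a further logarithm.

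The target is a recursion $\|\chi_{n+2}\|\le C\lambda\big(\|\chi_{n+1}\|+\|\chi_n\|\big)$ with $\lambda=|\log c_i|^2$. The hardest part is closing this with a norm that is stable under the iteration: it should allow $\partial_Y\chi_n$ to blow up like $|U_s-c|^{-1}$ or $|A|^{-1}$ near critical points without compounding to worse powers. I would try a norm bounding $|\chi|$ plus $\min(|U_s-c|,|A|)\,|\partial_Y\chi|$, with the exponential weight.

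Convergence in this case then requires $\alpha|\log c_i|^2\ll1$. This holds in $\mathbb H_=$, where $c_i\sim\alpha^{1/3}$, and in $\mathbb H_>$, where $c_i\sim\alpha$. It gives $\|\chi_+-(U_s-c)\|\lesssim\alpha\lambda$.
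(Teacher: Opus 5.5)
\textbf{There is a genuine gap in the case $m\ge\sqrt2$.} Your argument for $0<m<\sqrt2$ is essentially the paper's: formula \eqref{CR100}, the bound of Lemma~\ref{con:CR1}, and the two-step recursion. For $m\ge\sqrt2$, however, the proposal stops exactly where the proof has to happen, and with the norm you suggest the estimate does not close.

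Suppose the inductive information is only $|\chi_{\pm,n+1}|+\min(|U_s-c|,|A|)\,|\partial_Y\chi_{\pm,n+1}|\lesssim e^{\mp\theta Y}$. Near a zero $\widetilde{Y_{c,\pm}}$ of $\operatorname{Re}A$ one has $|U_s-c|\approx 1/m$ and $|A|\approx|Y-\widetilde{Y_{c,\pm}}|+c_i$. So $\partial_Y\chi_{\pm,n+1}$ may be as large as $|A|^{-1}$. Now apply your divergence rewriting to the first part of $\int_{\pm\infty}^Y F_{\pm,n+2}\,dZ$, with the splitting of \eqref{def: F_n+2}:
\begin{align*}
H_{n+1}(Y)=\frac{(U_s(Y)-c)\chi_{\pm,n+1}(Y)}{A(Y)}+\int_{\pm\infty}^Y\frac{(U_s-c)\partial_Z\chi_{\pm,n+1}-\partial_ZU_s\,\chi_{\pm,n+1}}{A}\,dZ .
\end{align*}
The remaining integrand is then of size $|A|^{-2}$. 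Its integral across $\widetilde{Y_{c,\pm}}$ is of size $c_i^{-1}$, not $|\log c_i|$, so the recursion loses a factor $c_i^{-1}$ per step instead of $|\log c_i|^2$. This rules out $\lambda=|\log c_i|^2$ in \eqref{con:HR}. For $m>\sqrt2$, where $c_i\approx\alpha$ in $\mathbb H_>$, the bound would not even give convergence of \eqref{CR3}. The logarithmic bounds of Lemma~\ref{con:CR4} help only when the numerators are bounded, so you need $\partial_Y\chi_{\pm,n+1}\in L^\infty_{\theta,\pm}$ itself.

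\textbf{The missing idea} is that $\partial_Y\chi_{\pm,n}$ does not blow up at the critical points at all. Only $\partial_Y^2\chi_{\pm,n}$ does, like $((U_s-c)A)^{-1}$, which is why \eqref{Y_th} weights only the second derivative. Proving that $\partial_Y\chi_{\pm,n}$ stays bounded requires an exact cancellation, as follows.
\begin{itemize}
\item Split $\chi_{\pm,n+2}=(U_s-c)\int_{\pm\infty}^Y\frac{\chi_{\pm,n+1}}{U_s-c}\,dZ+(U_s-c)\int\frac{G}{(U_s-c)^2}\,dZ$, where $G=A\big(H_{n+1}-P_n-\frac{(U_s-c)\chi_{\pm,n+1}}{A}\big)$ and $P_n$ is the integral of the second part of $F_{\pm,n+2}$.
\item Because $\partial_Y\chi_{\pm,n+1}$ is bounded, both $G$ and $\partial_YG$ are $O(|\log c_i|R)$, where $R=(\|\chi_{\pm,n+1}\|_{L^\infty_{\theta,\pm}}+\|\partial_Y\chi_{\pm,n+1}\|_{L^\infty_{\theta,\pm}}+\|\chi_{\pm,n}\|_{L^\infty_{\theta,\pm}})e^{\mp\theta Y}$.
\item The integration by parts of Lemma~\ref{con:G} then gives $\int_Y^{Y_1}\frac{G}{(U_s-c)^2}\,dZ=\frac{G(Y)}{(U_s-c)\partial_YU_s}+O(|\log c_i|^2R)$.
\item Hence in $\partial_Y\chi_{\pm,n+2}$ the term $\partial_YU_s\int\frac{G}{(U_s-c)^2}\,dZ$ cancels $\frac{G}{U_s-c}$, although each is of order $c_i^{-1}$ (up to logarithms) at $Y_c$.
\end{itemize}
The same cancellation is what makes $\partial_Y\chi_{\pm,1}=O(|\log c_i|)$, despite the last term in \eqref{CR6}.

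Two further steps are also missing for $m\ge\sqrt2$. First, $\chi_{\pm,1}$ and $\chi_{\pm,2}$ must be estimated by hand, as in Lemma~\ref{con:CR6}, because $\chi_{\pm,0}=U_s-c\notin L^\infty_{\theta,\pm}$ and the recursion cannot start at $n=0$. Second, the weighted second derivative must be recovered from $A^2\,Ray_0[\chi_{\pm,n+2}]=A^2F_{\pm,n+2}$. Without these steps, your inequality $\|\chi_{n+2}\|\le C\lambda(\|\chi_{n+1}\|+\|\chi_n\|)$ is only a target, not a consequence.
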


\section{Dispersion relation of the compressible Rayleigh equation}

The aim of this section is to construct the solution to  \eqref{R11p}. By now, we have constructed $\varphi_{\pm}$ by Proposition \ref{pro: varphi}. Define
\begin{align*}
\begin{cases}
  \varphi(Y)=C_{+}\varphi_{+}(Y),Y \geq 0,\\
\varphi(Y)=-C_{-}\varphi_{-}(Y),Y \leq 0.
\end{cases}
\end{align*}
We select appropriate non-zero constants $C_{+},C_{-}$ such that $\varphi(Y)$ becomes a solution to \eqref{R11p}. That means that $(C_{+},C_{-})$ needs to satisfy
\begin{align*}
\begin{pmatrix}
    \varphi_{+}(0)&   \varphi_{-}(0)\\
     \partial_Y \varphi_{+}(0)& \partial_Y\varphi_{-}(0)
\end{pmatrix}
\begin{pmatrix}
    C_{+}\\
  C_{-}
\end{pmatrix}=0.
\end{align*}
The existence of a non-trivial solution $(C_{+},C_{-})$, as well as the Rayleigh equation \eqref{R11p}, is guaranteed by the following dispersion relation:
\begin{align*}\
W(\alpha,c)\coloneqq 
\begin{vmatrix}
\varphi_{+}(0)&   \varphi_{-}(0)\\
     \partial_Y \varphi_{+}(0)& \partial_Y\varphi_{-}(0)
\end{vmatrix}
=0.
\end{align*}

In this section, we provide a rigorous formulation of $W(\alpha,c)$ and subsequently solve the dispersion relation $W(\alpha,c)=0$ in Section 6.

\begin{proposition}\label{con:disp}
  Let $m>0$ and suppose $\varphi_{\pm}$ is the non-trivial solution to \eqref{CR50} constructed in Section 3. For $(\alpha,c)\in \mathbb{H}$,  $W(\alpha,c)$ admits the following representation:
\begin{equation}\label{equ: dispersion}
\begin{aligned}
&	{(1-m^2c^2)}^{-1} W(\alpha,c)
=\frac{4\alpha cP_m(c)}{D_m(c)}
+
\alpha^2\mathcal I_{m,U_s}(c)+O(\alpha^3\lambda^3),
\end{aligned}
\end{equation}
where
\begin{align}
&P_m(c)
=
m^2(1-c^2)^2-2(1+c^2),\quad D_m(c)
=
\gamma_+\gamma_-
\left[
(1-c)^2\gamma_-
-
(1+c)^2\gamma_+
\right],\label{def: Pm, Dm}\\
&\mathcal I_{m,U_s}(c)=\int_0^{+ \infty} \frac{ ({(U_s-c)}^2-{(1- c)}^2 )
 ( 
{(U_s-c)}^2
-\frac{{(1+c)}^2}{\gamma_+ \gamma_-}
 )
}{{(U_s-c)}^2}
dZ\label{def: ImU_s}\\
\nonumber
&\quad\quad\quad\quad\quad +  \int_{- \infty}^0 \frac{({(U_s-c)}^2-{(1+ c)}^2 )
 ( 
{(U_s-c)}^2-
\frac{{(1-c)}^2}{\gamma_+ \gamma_-}
 )
}{{(U_s-c)}^2}
dZ, 
\end{align}
and
\begin{align*}
\lambda=\begin{cases}
  1,\quad m<\sqrt{2},\\
|\log c_i|^2, \quad m\geq \sqrt{2}.
\end{cases}
\end{align*}

\end{proposition}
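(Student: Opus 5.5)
The plan is to compute $W(\alpha,c)$ directly from the representation $\varphi_\pm=e^{\mp\beta_\pm Y}\chi_\pm$ of Proposition \ref{pro: varphi}. This gives
\[
\varphi_\pm(0)=\chi_\pm(0),\qquad \partial_Y\varphi_\pm(0)=\mp\beta_\pm\chi_\pm(0)+\partial_Y\chi_\pm(0),
\]
so that
\[
W=\chi_+(0)\partial_Y\chi_-(0)-\chi_-(0)\partial_Y\chi_+(0)+(\beta_++\beta_-)\chi_+(0)\chi_-(0).
\]
We then insert the series $\chi_\pm=\sum_n(\pm\beta_\pm)^n\chi_{\pm,n}$ and keep all terms through order $\alpha^2$. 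The tail consists of terms with $n\ge3$ and of products of two terms whose orders add up to at least $3$. We bound it by $O(\alpha^3\lambda^3)$ using $\|\chi_{\pm,n}\|_{Y_{\theta,\pm}}\le(C\lambda)^n$ and $|\beta_\pm|\lesssim\alpha$ from \eqref{est: gamma}. Here we assume the $Y_\theta$ norm controls both $\chi$ and $\partial_Y\chi$ at $Y=0$, up to the factor $|A(0)|^{-1}$, which is absorbed into $\lambda$.

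The key tool is the divergence form \eqref{CR11}. Together with the decay conditions \eqref{BC: chi_n}, it gives the explicit solution formula
\[
\chi_{\pm,n}(Y)=(U_s-c)\int_{\pm\infty}^Y\frac{A}{(U_s-c)^2}\,G_{\pm,n}\,dZ,\qquad G_{\pm,n}(Z)=\int_{\pm\infty}^{Z}F_{\pm,n}.
\]
Consequently
\[
\partial_Y\chi_{\pm,n}(0)=U_s'(0)\,\frac{\chi_{\pm,n}(0)}{-c}+\frac{A(0)}{-c}\,G_{\pm,n}(0).
\]
The normalization of $G$ must be adjusted so that the limits at $\pm\infty$ are consistent, i.e. we subtract the limiting value of $F$ appropriately; this is what the $\gamma_\pm^{-2}$ term in $F_{\pm,n+2}$ is for.

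At first order we observe that $F_{\pm,1}=\partial_Y\big((U_s-c)^2A^{-1}\big)$ is an exact derivative. Hence
\[
G_{\pm,1}=\frac{(U_s-c)^2}{A}-\frac{(1\mp c)^2}{\gamma_\pm^2},
\]
and
\[
\chi_{\pm,1}(0)=-c\int_{\pm\infty}^0\Big(1-\frac{(1\mp c)^2A}{\gamma_\pm^2(U_s-c)^2}\Big)dZ,
\]
which converges thanks to the exponential approach $A\to A_{\pm\infty}$ in \eqref{eq:S-A}.

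We then collect orders in $W$. At order $\alpha^0$ we have $\chi_{\pm,0}=U_s-c$, and the $U_s'(0)$ contributions cancel exactly, so this order vanishes. The order-$\alpha$ terms involve only $\chi_\pm(0)=-c$, the boundary values $G_{\pm,1}(0)=\frac{c^2}{1-m^2c^2}-\frac{(1\mp c)^2}{\gamma_\pm^2}$ and $\beta_\pm=\alpha\gamma_\pm$. After factoring out $(1-m^2c^2)$ and using $\gamma_\pm^2=1-m^2(1\mp c)^2$, they should reduce to the rational expression $4\alpha cP_m(c)/D_m(c)$. This is a finite algebraic verification, which I will organize by clearing the denominators $\gamma_+\gamma_-$.

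The order-$\alpha^2$ terms are the crux. They come from four sources: the products $\beta_\pm\chi_{\pm,1}(0)$ times order-one quantities, the cross term $\beta_+\beta_-\chi_{+,1}\chi_{-,1}$ type contributions, and the $G_{\pm,2}(0)$ contributions. For the last of these we integrate $F_{\pm,2}$ by parts, using that its first part is again nearly an exact derivative:
\[
(U_s-c)\big(2A^{-1}\partial_Y\chi_1+\partial_Y(A^{-1})\chi_1\big)=\partial_Y\big((U_s-c)A^{-1}\chi_1\big)+A^{-1}\big((U_s-c)\partial_Y\chi_1-U_s'\chi_1\big).
\]
The remaining pieces should combine with the $\chi_{\pm,1}(0)$ integrals into a single integral over each half-line. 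The target is the symmetric form \eqref{def: ImU_s}, whose integrand factors as $\big((U_s-c)^2-(1\mp c)^2\big)\big((U_s-c)^2-(1\pm c)^2/(\gamma_+\gamma_-)\big)/(U_s-c)^2$. I would aim for this form by writing every term with the common weight $A/(U_s-c)^2$ and substituting $A=1-m^2(U_s-c)^2$.

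I expect the main obstacle to be this second-order bookkeeping: showing that all the boundary terms at $Y=0$ cancel, and that the $m^2$-dependent parts recombine into the $\gamma_+\gamma_-$ factor. A secondary difficulty, for $m\ge\sqrt2$, is checking that each discarded term carries at most the logarithmic loss $\lambda$ per order, given that $|A(0)|$ and $|c|$ may be small. In the critical case this requires $c$-weights to be tracked explicitly rather than absorbed into constants.
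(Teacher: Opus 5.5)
The proposal has a genuine gap: the $\alpha^2$ coefficient is never derived. Your route is otherwise the paper's own. The divergence form \eqref{CR11} gives $\chi_{\pm,1},\chi_{\pm,2}$ explicitly, $W$ is rewritten as in \eqref{eq: W}, and the tail is bounded through Proposition~\ref{pro: varphi} and $|\beta_\pm|\lesssim\alpha$.

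Your order-$\alpha$ bookkeeping is right. It yields $\alpha(1-m^2c^2)\big[(1-c)^2\gamma_+^{-1}+(1+c)^2\gamma_-^{-1}\big]$. This becomes $4\alpha c(1-m^2c^2)P_m/D_m$ once you multiply numerator and denominator by $(1-c)^2\gamma_--(1+c)^2\gamma_+$; clearing $\gamma_+\gamma_-$ alone does not produce $P_m$.

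The $\alpha^2$ coefficient, however, is the actual content of the proposition, and you only assert that the pieces ``should combine'' into $\mathcal I_{m,U_s}$. It does close in your notation, as follows.

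Put $s=U_s-c$ and $B_\pm(Y)=\int_{\pm\infty}^Y\frac{s^2-(1\mp c)^2}{s^2\gamma_\pm^2}\,dZ$. Then $\chi_{\pm,1}=sB_\pm$ and $s\,\partial_Y\chi_{\pm,1}-\partial_YU_s\,\chi_{\pm,1}=s^2\partial_YB_\pm$. Your integration by parts of $F_{\pm,2}$ uses the pointwise identity
\[
\frac{s^2-(1\mp c)^2}{A\gamma_\pm^2}-\frac{s^2}{A}+\frac{s^2}{\gamma_\pm^2}=\frac{s^2-(1\mp c)^2}{\gamma_\pm^2}.
\]
It gives $A(0)G_{\pm,2}(0)=c^2B_\pm(0)+A(0)\widetilde G_\pm(0)$, where $\widetilde G_\pm(0)=\int_{\pm\infty}^0\frac{s^2-(1\mp c)^2}{\gamma_\pm^2}\,dZ$. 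Also $A(0)G_{\pm,1}(0)=c^2-(1\mp c)^2A(0)\gamma_\pm^{-2}$.

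Summing your formula for $\partial_Y\chi_{\pm,n}(0)$ over $n$ gives the exact identity
\[
W=-\frac{A(0)}{c}\big(\chi_+(0)\mathcal G_--\chi_-(0)\mathcal G_+\big)+(\beta_++\beta_-)\chi_+(0)\chi_-(0),\qquad \mathcal G_\pm=\sum_{n\geq1}(\pm\beta_\pm)^nG_{\pm,n}(0).
\]
Hence $\chi_{\pm,2}(0)$ does not enter at order $\alpha^2$, which is why the paper's $E_\pm$ drops out. Use this identity only to read off coefficients. Bound the tail from the unfactored products as you planned, since the prefactor $1/c$ is large at $m=\sqrt2$.

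Substituting the values above, every term proportional to $c^2$ cancels. The $\alpha^2$ part of $W$ is then exactly
\[
\alpha^2A(0)\Big[\frac{\gamma_+(1+c)^2}{\gamma_-}B_+(0)-\gamma_+^2\widetilde G_+(0)+\gamma_-^2\widetilde G_-(0)-\frac{\gamma_-(1-c)^2}{\gamma_+}B_-(0)\Big].
\]
The first two terms give the $\int_0^{+\infty}$ part of $\mathcal I_{m,U_s}$ and the last two the $\int_{-\infty}^0$ part. This is the paper's regrouping of $K$.

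Three smaller points.

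First, the norm $\|\cdot\|_{Y_{\theta,\pm}}$ already contains $\|\partial_Y\chi\|_{L^\infty_{\theta,\pm}}$. So the tail $O(\alpha^3\lambda^3)$ for $W$ carries no factor $|A(0)|^{-1}$, and such a factor could not in general be absorbed into $\lambda=|\log c_i|^2$ anyway.

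Second, where $A(0)=1-m^2c^2$ really matters is the final division by it, which multiplies the remainder by $|1-m^2c^2|^{-1}$. This is harmless for fixed $m$ except at $m^2=2+\sqrt5$. There $c_*=1/m$, so $1-m^2c^2=O(\alpha)$ on $\mathbb H_>$ and a separate argument is needed; the paper's proof does not supply one either.

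Third, $G_{\pm,n}=\int_{\pm\infty}^{Z}F_{\pm,n}$ needs no renormalization. The $\gamma_\pm^{-2}$ in \eqref{def: F_n+2} comes from $-\beta_\pm^2A^{-1}+\alpha^2=-\beta_\pm^2(A^{-1}-\gamma_\pm^{-2})$. It is what makes $F_{\pm,n+2}$ decay, so that these integrals converge.
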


\begin{lemma}
\label{dis 1}
 Let $m> 0$ and $(\alpha,c)\in \mathbb{H}$, we have $$|{(1- c)}^2 \gamma_--
 {(1+ c)}^2\gamma_+ |\geq C^{-1},$$
 for some constant $C>0$. More precisely, for $m \geq  \sqrt{2}$ and $(\alpha,c)\in \mathbb{H}_\geq$, the values $\gamma_\pm$   satisfy:
\begin{equation}
\begin{aligned}
\label{U3}
\gamma_{+} =& \frac{1-c_*}{1+c_*}i+O(|c-c_*|),\\
\gamma_{-} = & -\frac{1+c_*}{1-c_*}i+O(|c-c_*|),
\end{aligned}
\end{equation}
and
\begin{equation}
\begin{aligned}
\label{U2}
\gamma_+ \gamma_-=&1+O(|c-c_*|), \\
{(1- c)}^2 \gamma_--
 {(1+ c)}^2\gamma_+=&-2(1-c_*^2)i+O(|c-c_*|).
\end{aligned}
\end{equation}

\end{lemma}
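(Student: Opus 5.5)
The plan is to evaluate the quantity $(1-c)^2\gamma_--(1+c)^2\gamma_+$ exactly at the leading-order root ($c_*$ for $m\ge\sqrt2$, $\mathrm i\sigma(m)$ for $m<\sqrt2$) and show it is nonzero there. The full neighbourhood $\mathbb H$ is then reached by a Lipschitz perturbation argument. That argument rests on two facts. First, $A_{\pm\infty}=1-m^2(1\mp c)^2$ are polynomials in $c$ and stay bounded away from $0$ near the relevant root. Second, by the sign observation made before the lemma, $\operatorname{Im}A_{+\infty}\ge0$ and $\operatorname{Im}A_{-\infty}\le0$ on $\mathbb H$. So the prescribed branches of $\gamma_\pm=A_{\pm\infty}^{1/2}$, extended continuously to the closed half-planes away from $0$, are analytic and hence Lipschitz in $c$ on $\mathbb H$. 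Any identity verified at the root therefore holds on $\mathbb H$ with an $O(|c-c_{\rm root}|)$ error.

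\textbf{Case $m\ge\sqrt2$.} The key observation is the identity
\begin{align*}
1-m^2(1-c)^2=-\frac{(1-c)^2}{(1+c)^2}
\iff (1+c)^2+(1-c)^2=m^2(1-c^2)^2
\iff P_m(c)=0 .
\end{align*}
The symmetric identity $1-m^2(1+c)^2=-(1+c)^2/(1-c)^2$ is obtained the same way. Since $P_m(c_*)=0$ with $c_*\in[0,1)$ real (note $c_*=0$ when $m=\sqrt2$), both $A_{\pm\infty}(c_*)$ are negative reals. The branch constraints $\arg\gamma_+\in(0,\pi/2)$ and $\arg\gamma_-\in(-\pi/2,0)$ are imposed on $\mathbb H$, where $\operatorname{Im}A_{+\infty}>0$ and $\operatorname{Im}A_{-\infty}<0$. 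Hence the limits as $c\to c_*$ from $\mathbb H$ are
\begin{align*}
\gamma_+(c_*)=\mathrm i\frac{1-c_*}{1+c_*},\qquad \gamma_-(c_*)=-\mathrm i\frac{1+c_*}{1-c_*}.
\end{align*}
Lipschitz continuity then gives \eqref{U3}. Multiplying out, $\gamma_+\gamma_-=1+O(|c-c_*|)$. Moreover,
\begin{align*}
(1-c)^2\gamma_--(1+c)^2\gamma_+=-\mathrm i(1-c_*^2)-\mathrm i(1-c_*^2)+O(|c-c_*|),
\end{align*}
which is \eqref{U2}. On $\mathbb H_\ge$ we have $|c-c_*|\lesssim \alpha^{1/3}$ (for $m=\sqrt2$) or $|c-c_*|\lesssim\alpha$ (for $m>\sqrt2$). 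The lower bound $2(1-c_*^2)-O(\alpha^{1/3})\ge C^{-1}$ then follows.

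\textbf{Case $m<\sqrt2$.} Put $c=\mathrm i\sigma$. Then $A_{-\infty}=\overline{A_{+\infty}}$, and $\operatorname{Im}A_{+\infty}=2m^2\sigma>0$, so the branch choices force $\gamma_-=\overline{\gamma_+}$. Also $(1-c)^2=\overline{(1+c)^2}$. With $z:=(1+\mathrm i\sigma)^2\gamma_+$ the quantity equals $\bar z-z=-2\mathrm i\operatorname{Im}z$, so it suffices to show $\operatorname{Im}z\ne0$.

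I would proceed in three steps.
\begin{itemize}
\item Squaring, a direct computation gives
\begin{align*}
(1-c)^4A_{-\infty}-(1+c)^4A_{+\infty}=-4c\,P_m(c).
\end{align*}
Since $P_m(\mathrm i\sigma)=0$, we get $\bar z^2=z^2$, i.e.\ $z^2\in\mathbb R$. Hence $z$ lies on the real or the imaginary axis. Also $z\neq0$, because $|1+\mathrm i\sigma|>0$ and $A_{+\infty}\ne0$.
\item The map $m\mapsto z(m)$ is continuous on $(0,\sqrt2)$, because $\sigma(m)$ is continuous and $\operatorname{Im}A_{+\infty}>0$ keeps the branch of $\gamma_+$ continuous. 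A continuous nonvanishing path confined to the union of the two axes must stay on one axis.
\item As $m\to0$ we have $\sigma\to1$ and $\gamma_+\to1$, so $z\to(1+\mathrm i)^2=2\mathrm i$, which lies on the imaginary axis.
\end{itemize}
Hence $z\in \mathrm i\mathbb R\setminus\{0\}$, so $\operatorname{Im}z\ne0$, for every $m\in(0,\sqrt2)$. Finally, for fixed $m$ the Lipschitz bound extends the nonvanishing from $c=\mathrm i\sigma$ to $|c-\mathrm i\sigma|\le S\alpha$.

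The main obstacle is this branch bookkeeping in the subcritical case. $P_m=0$ only gives the vanishing of the product of the two sign combinations $(1-c)^2\gamma_-\mp(1+c)^2\gamma_+$. One must check that it is always the ``$+$'' combination that vanishes, and never the one appearing in $D_m$; that is exactly what the continuity-in-$m$ argument above is designed to settle.
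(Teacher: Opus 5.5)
Your proposal is correct. For $m\ge\sqrt2$ it follows the paper's argument. The paper rewrites $P_m(c_*)=0$ as $m^2=2(1+c_*^2)/(1-c_*^2)^2$, obtains $A_{\pm\infty}=-(1\mp c_*)^2/(1\pm c_*)^2+O(|c-c_*|)$, and reads off $\gamma_\pm$ from the branch convention. It treats $m=\sqrt2$ separately via $\gamma_\pm=\pm\mathrm i\sqrt{1\mp4c+2c^2}$. Your justification of the branch limits and of the Lipschitz dependence of $\gamma_\pm$ on $c$ fills in details the paper leaves implicit.

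For $m<\sqrt2$ you take a different and longer route. The paper works directly on all of $\mathbb H_<$ with a sector bound. Since $0<\sigma<1$, there is $\theta_0>0$ with $\arg(1-c)\in(-\pi/4+\theta_0,-\theta_0)$. Combined with $\arg\gamma_-\in(-\pi/2,0)$ this gives $\arg\big((1-c)^2\gamma_-\big)\in(-\pi+2\theta_0,-2\theta_0)$, and symmetrically $\arg\big((1+c)^2\gamma_+\big)\in(2\theta_0,\pi-2\theta_0)$. The two terms therefore have imaginary parts of opposite signs, each bounded away from zero, so no cancellation can occur.

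In your notation, this is just the remark that $\arg(1+\mathrm i\sigma)^2=2\arctan\sigma\in(0,\pi/2)$ and $\arg\gamma_+\in(0,\pi/2)$ force $\operatorname{Im}z>0$. That one observation replaces the factorization identity, the connectedness argument in $m$, the $m\to0$ limit, and the perturbation step away from $c=\mathrm i\sigma$.

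Your argument is still valid. Conjugation symmetry at the root, $z^2\in\mathbb R$ from $P_m(\mathrm i\sigma)=0$, and continuity of $z$ on the connected interval $(0,\sqrt2)$ together pin $z$ to the positive imaginary axis. Your by-product, that $z$ is exactly purely imaginary at the root, also follows from the paper's bound together with $z^2=\bar z^2$, so the extra machinery buys little. It does avoid checking $\sigma<1$, using only $\sigma\to1$ as $m\to0$.

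One small slip: the identity is $(1-c)^4A_{-\infty}-(1+c)^4A_{+\infty}=4cP_m(c)$, with a plus sign, consistent with the paper's computation of the $O(1)$ term of the Wronskian. This is harmless because $P_m(\mathrm i\sigma)=0$.
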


\begin{proof}
For $m<\sqrt{2}$,  we notice $\sigma(m)\in (0,1]$ and $c$ is near $\sigma i$, so there exists $\theta_0>0 $, such that $$\arg(1- c)\in (-\frac{\pi}{4}+\theta_0,-\theta_0).$$
 Then we obtain
$$\arg\big({(1- c)}^2 \gamma_-\big)\in (-\pi+2\theta_0,-2\theta_0),$$
which implies$$\operatorname{Im}\big({(1- c)}^2 \gamma_-\big)\leq -|{(1- c)}^2 \gamma_-|\sin{2\theta_0}\leq -C^{-1}.$$
 Similarly, we have
$$\operatorname{Im}\big({(1+ c)}^2 \gamma_+\big)\geq C^{-1}.$$
Thus, we conclude $$|{(1- c)}^2 \gamma_--
 {(1+ c)}^2\gamma_+| \geq \big|\operatorname{Im}\big({(1- c)}^2 \gamma_--
 {(1+ c)}^2\gamma_+\big)\big|\geq C^{-1}.$$
 
For $m=\sqrt{2}$, we have
\begin{align*}
	\gamma_\pm=\sqrt{
 1-2{(1\mp c)}^2
    }
    =\sqrt{-1\pm 4c -2c^2}
    =\pm \mathrm{i}
    \sqrt{1\mp 4c +2c^2} =\pm i+O(c).
\end{align*}

For $m>\sqrt{2}$, we derive an explicit expression for $m^2$ in terms of $c_*^2$.  We notice
\begin{equation}
m^2 = \frac{2(1+c_*^2)}{(1-c_*^2)^2}. \label{eq:m_expr}
\end{equation}

Now, we evaluate $A_{+\infty}$:
\[ A_{+\infty} = 1 - m^2(1-c_*)^2+O(|c-c_*|). \]
Substituting the expression for $m^2$ from \eqref{eq:m_expr}:
\begin{align*}
A_{+\infty}= 1 - \frac{2(1+c_*^2)}{(1-c_*)^2(1+c_*)^2} (1-c_*)^2+O(|c-c_*|)= -\frac{(c_*-1)^2}{(1+c_*)^2}+O(|c-c_*|).
\end{align*}
Since $c_*\in (0,1)$, we have
\begin{align*}
\gamma_{+} = \frac{1-c_*}{1+c_*}i+O(|c-c_*|).
\end{align*}
Next, we evaluate $A_{-\infty}$:
\[ A_{-\infty} = 1 - m^2(-1-c_*)^2 = 1 - m^2(1+c_*)^2. \]
Substituting the expression for $m^2$ from \eqref{eq:m_expr}:
\begin{align*}
A_{-\infty} = 1 - \frac{2(1+c_*^2)}{(1-c_*)^2(1+c_*)^2} (1+c_*)^2 +O(|c-c_*|) = -\frac{(c_*+1)^2}{(1-c_*)^2}+O(|c-c_*|).
\end{align*}
Since $c_*\in (0,1)$, we have
\begin{align*}
\gamma_{-} =  -\frac{1+c_*}{1-c_*}i+O(|c-c_*|).
\end{align*}
Then we directly derive \eqref{U2} by \eqref{U3}.

\end{proof}

%

    \bigskip

\underline{{\it Proof of Proposition \ref{con:disp}}.} To get $W(\alpha,c)$, the main objective is to get the value of $\chi_{\pm,n} (0)$ and $\partial_Y\chi_{\pm,n} (0)$. 

$\bullet$  Firstly, recalling the definition of $\chi_{\pm,0}=U_s-c$, we get
$$\chi_{\pm,0}(0)=-c,\quad \partial_Y\chi_{\pm,0}(0)=\partial_YU_s(0).$$

$\bullet$  For $\chi_{\pm,1}$,  by \eqref{CR4} and \eqref{def: F_1}, we have
\begin{align*}
Ray_0\big[ \chi_{\pm,1}\big]	=&
2{A(Y)}^{-1}  (U_s-c)  \partial_Y(U_s-c)+{(U_s-c)}^2\partial_Y({A(Y)}^{-1})\\
=&\partial_Y 
( 
{A(Y)}^{-1} {(U_s-c)}^2
).
\end{align*}
Using the boundary condition \eqref{BC: chi_n}, we obtain
\begin{equation}
\begin{aligned}
\label{CR5}
	\chi_{\pm,1}(Y)&=(U_s-c) \int_{\pm \infty}^Y  \big(\frac{A(Z)}{{(U_s-c)}^2}  \big) \big(
 \int_{\pm \infty}^Z  
 \partial_{Z' }
\big( 
{A(Z')}^{-1} {(U_s-c)}^2
\big)
 dZ'
\big)dZ\\
 &=(U_s-c) \int_{\pm \infty}^Y 
 1-\frac{{(1\mp c)}^2 A(Z)}{{(U_s-c)}^2\gamma_\pm^2}
 dZ=(U_s-c) \int_{\pm \infty}^Y 
 \frac{{(U_s-c)}^2-{(1\mp c)}^2}{{(U_s-c)}^2\gamma_\pm^2}
 dZ.
\end{aligned}
\end{equation}
On the one hand, notice that
$${(U_s-c)}^2\gamma_\pm^2-{(1\mp c)}^2 A(Z)={(U_s-c)}^2-{(1\mp c)}^2.$$
Act $\pa_Y$ on both sides of \eqref{CR5} to get
\begin{equation}
\begin{aligned}
\partial_Y \chi_{\pm,1}(Y)
 &=\partial_Y U_s \int_{\pm \infty}^Y 
 \frac{{(U_s-c)}^2-{(1\mp c)}^2}{{(U_s-c)}^2\gamma_\pm^2}
 dZ
 +
 \frac{{(U_s-c)}^2-{(1\mp c)}^2}{{(U_s-c)}\gamma_\pm^2}.
 \label{CR6}
\end{aligned}
\end{equation}

Take $Y=0$ on both sides of \eqref{CR5} and \eqref{CR6} to obtain
\begin{align*}
	\chi_{\pm,1}(0)=-cB_\pm, \quad \partial_Y 	\chi_{\pm,1}(0)=\partial_Y U_s(0)B_\pm-cD_\pm, 
\end{align*}
where
\begin{align*}
B_\pm = \int_{\pm \infty}^0 
 \frac{{(U_s-c)}^2-{(1\mp c)}^2}{{(U_s-c)}^2\gamma_\pm^2}
 dZ,\quad D_\pm =  
 1-\frac{{(1\mp c)}^2 (1-m^2 c^2)}{c^2\gamma_\pm^2}.
\end{align*}

$\bullet$ For $\chi_{\pm,2}$,  take $n=0$ on \eqref{CR4} and \eqref{def: F_n+2} to get
\begin{align*}
Ray_0\big[ \chi_{\pm,2}\big]	=&
 (U_s-c) \big( 2{A(Y)}^{-1}\partial_Y\chi_{\pm,1}+\partial_Y\big({A(Y)}^{-1}\big)\chi_{\pm,1} \big)
\\
&- (U_s-c)\big({A(Y)}^{-1}-\gamma_\pm^{-2} \big)\chi_{\pm,0}.
\end{align*}
Use \eqref{CR5} and \eqref{CR6} to obtain 
\begin{align*}
	Ray_0\big[ \chi_{\pm,2}\big]	=&
 \big( 
 \frac{2(U_s-c)\partial_Y U_s}{A(Y)}+
 {(U_s-c)}^2 \partial_Y\big({A(Y)}^{-1}\big)
 \big)
 \int_{\pm \infty}^Y 
 1-\frac{{(1\mp c)}^2 A(Z)}{{(U_s-c)}^2\gamma_\pm^2}
 dZ\\
 &+\frac{2{(U_s-c)}^2}{A(Y)}
\big(
 1-\frac{{(1\mp c)}^2 A(Y)}{{(U_s-c)}^2\gamma_\pm^2}\big)
- {(U_s-c)}^2\big({A(Y)}^{-1}-\gamma_\pm^{-2} \big)\\
	=&
 \partial_Y 
\big( 
\frac{{(U_s-c)}^2}{A(Y)}
\big)
 \int_{\pm \infty}^Y 
 1-\frac{{(1\mp c)}^2 A(Z)}{{(U_s-c)}^2\gamma_\pm^2}
 dZ\\
 &+\frac{2{(U_s-c)}^2}{A(Y)}
\big(
 1-\frac{{(1\mp c)}^2 A(Y)}{{(U_s-c)}^2\gamma_\pm^2}\big)
- {(U_s-c)}^2\big({A(Y)}^{-1}-\gamma_\pm^{-2} \big)\\
\coloneqq & T_1(Y)+T_2(Y).
\end{align*}
Using the boundary condition \eqref{BC: chi_n} again,  we have
\begin{equation}
\chi_{\pm,2}(Y)=(U_s-c)\int_{\pm \infty}^Y G_\pm(Z)dZ,  \label{CR47}
\end{equation}
with 
\beno
G_\pm(Y)\coloneqq
 \frac{A(Y)}{{(U_s-c)}^2}\int_{\pm \infty}^Y T_1(Z)+T_2(Z)
dZ.
\eeno
Integration by parts to get
\begin{align*}
	\int_{\pm \infty}^Y T_1(Z)
dZ
=&\int_{\pm \infty}^Y \big( 
 \int_{\pm \infty}^Z 
 1-\frac{{(1\mp c)}^2 A(Z')}{{(U_s-c)}^2\gamma_\pm^2}
 dZ'
\big)d\big( 
\frac{{(U_s-c)}^2}{A(Z)}
\big)\\
=&
\frac{{(U_s-c)}^2}{A(Y)}
  \int_{\pm \infty}^Y \big(
 1-\frac{{(1\mp c)}^2 A(Z)}{{(U_s-c)}^2\gamma_\pm^2}\big)dZ
 -
 \int_{\pm \infty}^Y
\frac{{(U_s-c)}^2}{A(Z)}
\big( 
 1-\frac{{(1\mp c)}^2 A}{{(U_s-c)}^2\gamma_\pm^2}
\big)dZ,
\end{align*}
which implies that
\begin{align*}
	\int_{\pm \infty}^Y T_1(Z)+T_2(Z)
dZ
=&\frac{{(U_s-c)}^2}{A(Y)}
\int_{\pm \infty}^Y  \big(
 1-\frac{{(1\mp c)}^2 A(Z)}{{(U_s-c)}^2\gamma_\pm^2}\big)dZ\\
 &+ \int_{\pm \infty}^Y\big[
\frac{{(U_s-c)}^2}{A(Z)}
\big( 
 1-\frac{{(1\mp c)}^2 A(Z)}{{(U_s-c)}^2\gamma_\pm^2}
\big)- {(U_s-c)}^2\big({A(Z)}^{-1}-\gamma_\pm^{-2} \big)\big]dZ\\
 =&\frac{{(U_s-c)}^2}{A(Y)}
  \int_{\pm \infty}^Y\big(
 1-\frac{{(1\mp c)}^2 A(Z)}{{(U_s-c)}^2\gamma_\pm^2}\big)dZ
+ \int_{\pm \infty}^Y \frac{{(U_s-c)}^2-{(1\mp c)}^2}{\gamma_\pm^2}
dZ.
\end{align*}
Thus,  we have 
\begin{equation}
\begin{aligned}
G_\pm(Y)
=\int_{\pm \infty}^Y\big(
 1-\frac{{(1\mp c)}^2 A(Z)}{{(U_s-c)}^2\gamma_\pm^2}\big)dZ
+  \frac{A(Y)}{{(U_s-c)}^2} \int_{\pm \infty}^Y \frac{{(U_s-c)}^2-{(1\mp c)}^2}{\gamma_\pm^2}
dZ.
\label{CR46}
\end{aligned}
\end{equation}
Take $\pa_Y$ on \eqref{CR47} to get
\begin{align}\label{pa_Y chi_2}
\partial_Y \chi_{\pm,2}=\partial_Y U_s\int_{\pm \infty}^Y G_\pm(Z)dZ
+(U_s-c)G_\pm(Y).
\end{align}
Take $Y=0$ on both sides of \eqref{CR47} and \eqref{pa_Y chi_2} to have
\begin{equation}
\begin{aligned}
\chi_{\pm,2}(0)=-c
E_\pm,\quad\partial_Y \chi_{\pm,2}(0)=\partial_Y U_s(0)E_\pm
-cG_\pm(0),
\label{CR48}
\end{aligned}
\end{equation}
where $E_\pm= \int_{\pm \infty}^0 G_\pm(Z)dZ.$

$\bullet$ For $\chi_{\pm,n}$ with $n\geq 3$,  using Proposition \ref{pro: varphi} to get
$$\chi_{\pm,n}(0)=O(\lambda^n), \ \partial_Y \chi_{\pm,n}(0)=O(\lambda^n), \ \text{for}\  n \geq 3.$$

$\bullet$
Bringing the above estimates together and using $|\beta_\pm|\lesssim \al$, we get
\begin{align}  
\chi_{\pm}(0)=&-c\pm \beta_\pm (-cB_\pm)-c
\beta_\pm^2 E_\pm +O(\alpha^3  \lambda^3)   \label{RR1} \\
=&
-c\big(
      1\pm \beta_\pm B_\pm+
\beta_\pm^2 E_\pm
    \big)+O(\alpha^3  \lambda^3), \nonumber
    \end{align}
    and
    \begin{align}
 \partial_Y \chi_{\pm}(0)=&\partial_YU_s(0)
\pm \beta_\pm \big(\partial_Y U_s(0)B_\pm-cD_\pm\big)
+\beta_\pm^2
\big( 
\partial_Y U_s(0)E_\pm
-cG_\pm(0)
\big)+O(\alpha^3  \lambda^3)\nonumber\\
=& \partial_YU_s(0)\big(
      1\pm \beta_\pm B_\pm+
\beta_\pm^2 E_\pm
    \big)
    -c
    \big(\pm \beta_\pm D_\pm+\beta_\pm^2  G_\pm(0) 
    \big)+O(\alpha^3  \lambda^3).\label{RR1-1}
\end{align}
On the one hand, we notice that
\[
\partial_Y \varphi_{\pm}(0)=\mp\beta_\pm \chi_{\pm}(0)+\partial_Y \chi_{\pm}(0),
\]
which implies that $W(\alpha,c)$ is written as 
\begin{equation}\label{eq: W}
\begin{aligned}
W(\alpha,c)&=\begin{vmatrix}
    \chi_+(0) &  \chi_-(0)\\
     \partial_Y \chi_+(0)- \beta_+ \chi_+(0) &  \partial_Y \chi_-(0)+\beta_- \chi-(0)\\
\end{vmatrix}\\
&=
\begin{vmatrix}
    \chi_+(0) &  \chi_-(0)\\
     \partial_Y \chi_+(0) &  \partial_Y \chi_-(0)\\
\end{vmatrix}
+(\beta_++\beta_-) \chi_+(0)  \chi_-(0).
\end{aligned}
\end{equation}
According to \eqref{RR1} and \eqref{RR1-1}, we have
\begin{align*}
\begin{vmatrix}
    \chi_+(0) &  \chi_-(0)\\
     \partial_Y \chi_+(0) &  \partial_Y \chi_-(0)\\
\end{vmatrix}=&
\begin{vmatrix}
  -c & -c\\
    \partial_YU_s(0) &  \partial_YU_s(0)\\
\end{vmatrix}
+\alpha\begin{vmatrix}
   -c\gamma_+B_+ &  c\gamma_-B_- \\
        \partial_YU_s(0) &  \partial_YU_s(0)\\
\end{vmatrix}\\
&+\al\begin{vmatrix}
   -c & -c\\
     \gamma_+ \partial_Y U_s(0)B_+ & -\gamma_- \partial_Y U_s(0)B_-\\
\end{vmatrix}
+\al \begin{vmatrix}
   -c & -c\\
     -\gamma_+ cD_+ &       \gamma_- cD_-\\
\end{vmatrix}\\
&+\alpha^2
\begin{vmatrix}
   -c &  -c \\
        \gamma_+^2
\partial_Y U_s(0)E_+
&  \gamma_-^2
\partial_Y U_s(0)E_-
\\
\end{vmatrix}
+\alpha^2\begin{vmatrix}
   -c &  -c \\
        -\gamma_+^2
cG_+(0)
&  -\gamma_-^2cG_-(0)
\\
\end{vmatrix}\\
&+\al^2\begin{vmatrix}
   -c\gamma_+B_+ &  c\gamma_-B_- \\
     \gamma_+\partial_Y U_s(0)B_+&       -\gamma_- \partial_Y U_s(0)B_-\\
\end{vmatrix}\\
&+\al^2\begin{vmatrix}
   -c\gamma_+B_+ &  c\gamma_-B_- \\
     -\gamma_+cD_+&       \gamma_- cD_-\\
\end{vmatrix}
+\al^2\begin{vmatrix}
   -c\gamma_+^2E_+ &  -c\gamma_-^2E_- \\
        \partial_YU_s(0) &  \partial_YU_s(0)\\
\end{vmatrix}
+O(\alpha^3  \lambda^3)\\
=&\alpha c^2\begin{vmatrix}
   1 & 1\\
     \gamma_+ D_+ &       -\gamma_- D_-\\
\end{vmatrix}
+\alpha^2c^2\big(
\begin{vmatrix}
  1 & 1 \\
        \gamma_+^2
G_+(0)
&   \gamma_-^2
G_-(0)\\
\end{vmatrix}-\gamma_+\gamma_-\begin{vmatrix}
   B_+ &  B_- \\
  D_+&   D_-\\
\end{vmatrix}
\big)\\
&+O(\alpha^3  \lambda^3).
\end{align*}

Bringing the above equality and  \eqref{RR1} into \eqref{eq: W}, we get
\begin{align*}
	W(\alpha,c)
 =&\alpha c^2\begin{vmatrix}
   1 & 1\\
     \gamma_+ D_+ &       -\gamma_- D_-\\
\end{vmatrix}
+\alpha^2c^2\big(
\begin{vmatrix}
  1 & 1 \\
        \gamma_+^2
G_+(0)
&   \gamma_-^2
G_-(0)\\
\end{vmatrix}-\gamma_+\gamma_-\begin{vmatrix}
   B_+ &  B_- \\
  D_+&   D_-\\
\end{vmatrix}
\big)\\
&+c^2 \alpha
(\gamma_+ +\gamma_- )
\big(
        1+ \gamma_+  B_+ \alpha +
\gamma_+^2 E_+ \alpha^2 
    \big)
    \big(
      1- \gamma_-  B_- \alpha +
\gamma_-^2 E_-  \alpha^2
    \big)+O(\alpha^3  \lambda^3),
\end{align*}
which yields that
\begin{equation}
\begin{aligned}
\label{CRR1}
	{(c^2 \alpha)}^{-1} W(\alpha,c)
     \coloneqq &-(\gamma_+ D_+    +  \gamma_-  D_- )
+
(\gamma_+ +\gamma_- )
+K\alpha+O(\alpha^2  c^{-2}\lambda^3)\\
     =&\gamma_+(1-D_+)+\gamma_-(1-D_-)+K\alpha+O(\alpha^2  c^{-2}\lambda^3),
\end{aligned}
\end{equation}
where $K$ is defined by
\begin{align*}
K=	- \gamma_+ \gamma_-\begin{vmatrix}
  B_+   &      B_-\\  
    D_+    
    &  
      D_-  
   \\
\end{vmatrix}
+\begin{vmatrix}
   1 &    1\\  
      \gamma_+^2  G_+(0) 
    &  
   \gamma_-^2   G_-(0) 
   \\
\end{vmatrix}+
(\gamma_+ +\gamma_- )
\big(
         \gamma_+  B_+ - \gamma_-  B_-
    \big).
\end{align*}

Next, we provide a more precise formulation of the right-hand side of \eqref{CRR1}. 

$\bullet$ For the $O(1)$ term, we have
\begin{align*}
	&\gamma_+(1-D_+)+\gamma_-(1-D_-)=
 \frac{{(1- c)}^2 (1-m^2 c^2)}{c^2\gamma_+}+
 \frac{{(1+ c)}^2 (1-m^2 c^2)}{c^2\gamma_-}\\
 &\quad= \frac{(1-m^2 c^2)}{c^2\gamma_+\gamma_-}
 \big({(1- c)}^2 \gamma_-+
 {(1+ c)}^2\gamma_+
\big)\\
 &\quad=\frac{(1-m^2 c^2)}{c^2\gamma_+\gamma_-\big({(1- c)}^2 \gamma_--
 {(1+ c)}^2\gamma_+
\big)}
\big({(1- c)}^4 \gamma_-^2-
 {(1+ c)}^4\gamma_+^2
\big)\\
&\quad =\frac{4(1-m^2 c^2)
  \big( m^2 {(1-c^2)}^2-2(c^2+1)\big)}{c\gamma_+\gamma_-  \big({(1- c)}^2 \gamma_--
 {(1+ c)}^2\gamma_+
  \big)}.
\end{align*}

$\bullet$ For the $O(\alpha)$ terms, we denote
\begin{align*}
G_\pm(0)=B_\pm+F_\pm,\ F_\pm=  \frac{1-m^2c^2}{c^2} \int_{\pm \infty}^0 \frac{{(U_s-c)}^2-{(1\mp c)}^2}{\gamma_\pm^2}
dZ,
\end{align*}
which means 
\begin{align*}
K=&	 \gamma_+ \gamma_- (1- D_-) B_+
-   \gamma_+^2 F_+-\gamma_+ \gamma_- (1-D_+) B_-
+ \gamma_-^2  F_-.
\end{align*}
By noticing the facts
\begin{align*}
	& \gamma_+ \gamma_- (1- D_-) B_+
-   \gamma_+^2 F_+\\
=&\frac{1-m^2c^2}{c^2} \int_{+ \infty}^0 \frac{  \big({(U_s-c)}^2-{(1- c)}^2  \big)
  \big( \frac{{(1+c)}^2}{\gamma_+ \gamma_-}
-{(U_s-c)}^2
  \big)
}{{(U_s-c)}^2}
dZ,
\end{align*}
and
\begin{align*}
	&\gamma_+ \gamma_- (1-D_+) B_-
- \gamma_-^2  F_-\\
=&\frac{1-m^2c^2}{c^2} \int_{- \infty}^0 \frac{  \big({(U_s-c)}^2-{(1+ c)}^2  \big)
  \big( \frac{{(1-c)}^2}{\gamma_+ \gamma_-}
-{(U_s-c)}^2
\big)
}{{(U_s-c)}^2}
dZ,
\end{align*}
we get
\begin{align*}
K=	& \frac{1-m^2c^2}{c^2} \int_0^{+ \infty} \frac{\big({(U_s-c)}^2-{(1- c)}^2\big)
\big( 
{(U_s-c)}^2
-\frac{{(1+c)}^2}{\gamma_+ \gamma_-}
\big)
}{{(U_s-c)}^2}
dZ\\
&+\frac{1-m^2c^2}{c^2} \int_{- \infty}^0 \frac{\big({(U_s-c)}^2-{(1+ c)}^2\big)
\big( 
{(U_s-c)}^2-
\frac{{(1-c)}^2}{\gamma_+ \gamma_-}
\big)
}{{(U_s-c)}^2}
dZ.
\end{align*}
Multiplying $c^2\al$ on both sides of the above equation \eqref{CRR1}, we get the desired results.

\qed

\section{The proof of Proposition \ref{pro: varphi}}

In this section, we prove Proposition \ref{pro: varphi} for the two cases $0<m<\sqrt 2$ and $m\geq\sqrt 2$ respectively. Define the following functional spaces for the Rayleigh equation:
\begin{equation}\label{Y_th}
\begin{aligned}
&\|{\varphi}\|_{Y_{ \theta,\pm }}\coloneqq 
\|(U_s-c)A(Y){\partial_Y^2 \varphi}\|_{L_{ \theta ,\pm}^{\infty}}+
\|{\partial_Y \varphi}\|_{L_{ \theta ,\pm}^{\infty}}
+
 \|{\varphi}\|_{L_{ \theta,\pm}^{\infty}},
\end{aligned}
\end{equation}
with
\begin{align*}
	&\|f_+\|_{L^\infty_{\theta,+}}\coloneqq \|e^{ \theta Y}f_+\|_{L^\infty(0,+\infty)},
	\quad\|f_{-}\|_{L^\infty_{\theta,-}}\coloneqq \|e^{- \theta Y}f_-\|_{L^\infty(-\infty,0)},
\end{align*}
where $\theta>0$ is the universal constant defined in \eqref{eq:S-A}. It is easy to see  that $\|f\|_{L^\infty(\mathbb{R}_\pm)}\leq \|f\|_{L^\infty_{\theta,\pm}}$.
In this section, we always assume that $(\alpha, c)\in \mathbb H$.

 Firstly, we consider the following non-homogeneous equation
\begin{align}\label{equ: non-homo}
{Ray}_0[\varphi_\pm]=F_\pm,\quad Y\in \mathbb{R}_{\pm},
\end{align}
where $F_{\pm}\in {L^\infty_{\theta,\pm}}$.

Define
\begin{align}\label{CR100}
\varphi_\pm(Y)=(U_s-c)\int_{\pm\infty}^Y\big(
\frac{A(Z)}{{(U_s-c)}^2}
\int_{\pm \infty}^Z F_{\pm}(Z')dZ'\big)
dZ,\ Y\in \mathbb{R}_\pm.
\end{align}
which is a solution to \eqref{equ: non-homo}.

\medskip

Due to Young's inequality
\[\big\|e^{\pm \th Y}
    \int_{\pm \infty}^Y F_{\pm}(Z)dZ\big\|_{L^\infty}\lesssim 
    \big\|e^{\pm \th Y}
    \int_{\pm \infty}^Y \|F_{\pm}\|_{L^\infty_{\theta,\pm}}e^{\mp \theta Z}dZ\big\|_{L^\infty}\lesssim \|F_{\pm}\|_{L^\infty_{\theta,\pm}}.
    \]
We have the following lemma.
\begin{lemma}
    If $F_{\pm}\in {L^\infty_{\theta,\pm}}$, we have  $$\big\|
    \int_{\pm \infty}^Y F_{\pm}(Z)dZ\big\|_{L^\infty_{\theta,\pm}}\lesssim \|F_{\pm}\|_{L^\infty_{\theta,\pm}}.$$
    \label{con:CR2}
\end{lemma}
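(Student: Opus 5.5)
The plan is a direct pointwise estimate of the tail integral, treating the $+$ and $-$ cases symmetrically. I give the argument for the $+$ sign, $Y\ge 0$, and the $-$ sign follows by the reflection $Y\mapsto -Y$.

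First, the integral is well defined. Set $M=\|F_+\|_{L^\infty_{\theta,+}}$. By the definition of the weighted norm,
\begin{align*}
|F_+(Z)|\le M e^{-\theta Z}\qquad\text{for } Z\ge0.
\end{align*}
Since the right-hand side is integrable on $[0,\infty)$, $\int_{+\infty}^Y F_+(Z)\,dZ=-\int_Y^{+\infty}F_+(Z)\,dZ$ converges absolutely for every $Y\ge0$.

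Next, the pointwise bound. For $Y\ge0$,
\begin{align*}
e^{\theta Y}\Big|\int_{+\infty}^Y F_+(Z)\,dZ\Big|\le e^{\theta Y}\int_Y^{+\infty}Me^{-\theta Z}\,dZ=\frac{M}{\theta}.
\end{align*}
Taking the supremum over $Y\ge0$ gives $\big\|\int_{+\infty}^Y F_+\,dZ\big\|_{L^\infty_{\theta,+}}\le \theta^{-1}\|F_+\|_{L^\infty_{\theta,+}}$. The implicit constant is $\theta^{-1}$, which depends only on the profile constant in \eqref{eq:S-A} and not on $(\alpha,c)$.

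For the $-$ sign and $Y\le0$, the same computation applies with $|F_-(Z)|\le Me^{\theta Z}$, integrating from $-\infty$ to $Y$, which again yields $M/\theta$.

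There is no real obstacle here. The only point needing care is that the lower limit $\pm\infty$ is chosen so that the integration runs over the region where the weight decays; this is what makes the bound uniform in $Y$, with no growing factor from integrating over $[0,Y]$.
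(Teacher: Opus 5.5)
Your proof is correct and follows the same argument as the paper's. You bound $|F_\pm(Z)|\le \|F_\pm\|_{L^\infty_{\theta,\pm}}e^{\mp\theta Z}$ and integrate the exponential tail from $\pm\infty$, which gives the explicit constant $\theta^{-1}$.
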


\medskip

\subsection{The case $m<\sqrt{2}$}

\begin{lemma} \label{con:CR1}
Let $(\alpha,c)\in \mathbb{H}_<$ and $\varphi_{\pm}$ be given in \eqref{CR100}. For each fixed $0<m<\sqrt 2$, it holds that
 \begin{align} 
 \|{\varphi_\pm}\|_{Y_{ \theta,\pm }}\approx\|\partial_Y^2\varphi_\pm\|_{L^\infty_{\theta,\pm}} +\|\partial_Y\varphi_\pm\|_{L^\infty_{\theta,\pm}}+\|\varphi_\pm\|_{L^\infty_{\theta,\pm}}\lesssim \|F_\pm\|_{L^\infty_{\theta,\pm}}.
\end{align} 
  
\end{lemma}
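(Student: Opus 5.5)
The plan is to bound the three pieces of the norm one at a time, working directly from the explicit formula \eqref{CR100}. The bounds \eqref{est1: U_s-c, A} hold for $(\alpha,c)\in\mathbb H_<$, so $U_s-c$, $A$ and their reciprocals are uniformly bounded. This makes the weight $(U_s-c)A$ in the first term of $\|\cdot\|_{Y_{\theta,\pm}}$ comparable to $1$, which already gives the equivalence $\|\varphi_\pm\|_{Y_{\theta,\pm}}\approx\|\partial_Y^2\varphi_\pm\|_{L^\infty_{\theta,\pm}}+\|\partial_Y\varphi_\pm\|_{L^\infty_{\theta,\pm}}+\|\varphi_\pm\|_{L^\infty_{\theta,\pm}}$. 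It therefore suffices to bound the unweighted norms by $\|F_\pm\|_{L^\infty_{\theta,\pm}}$.

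Set $H_\pm(Z)=\int_{\pm\infty}^Z F_\pm(Z')\,dZ'$. By Lemma \ref{con:CR2}, $\|H_\pm\|_{L^\infty_{\theta,\pm}}\lesssim\|F_\pm\|_{L^\infty_{\theta,\pm}}$. Next consider
\[
\Psi_\pm(Y)=\int_{\pm\infty}^Y \frac{A(Z)}{(U_s-c)^2}H_\pm(Z)\,dZ .
\]
The factor $A/(U_s-c)^2$ is bounded, so its product with $H_\pm$ lies in $L^\infty_{\theta,\pm}$, and a second application of Lemma \ref{con:CR2} gives $\|\Psi_\pm\|_{L^\infty_{\theta,\pm}}\lesssim\|F_\pm\|_{L^\infty_{\theta,\pm}}$. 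Since $\varphi_\pm=(U_s-c)\Psi_\pm$ with $|U_s-c|\le C$, the bound on $\|\varphi_\pm\|_{L^\infty_{\theta,\pm}}$ follows.

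For the derivatives, differentiate \eqref{CR100}:
\[
\partial_Y\varphi_\pm=\partial_YU_s\,\Psi_\pm+\frac{A}{U_s-c}H_\pm,
\]
and
\[
\partial_Y^2\varphi_\pm=\partial_Y^2U_s\,\Psi_\pm+\partial_YU_s\frac{A}{(U_s-c)^2}H_\pm+\partial_Y\Big(\frac{A}{U_s-c}\Big)H_\pm+\frac{A}{U_s-c}F_\pm .
\]
The assumptions \eqref{eq:S-A} give $\partial_Y^kU_s\in L^\infty$ for $k=1,2$. From this, $\partial_YA=-2m^2(U_s-c)\partial_YU_s$ is bounded, and together with the lower bounds on $|U_s-c|$ this makes $\partial_Y\big(A/(U_s-c)\big)$ bounded as well. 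Every coefficient multiplying $\Psi_\pm$, $H_\pm$ or $F_\pm$ is therefore bounded. Each of these three functions is controlled in $L^\infty_{\theta,\pm}$ by $\|F_\pm\|_{L^\infty_{\theta,\pm}}$, which yields the estimate.

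No step here is a genuine obstacle; the argument consists of the two integrations supplied by Lemma \ref{con:CR2}, with the exponential weight preserved each time. The one point that must be checked is that all coefficients are bounded uniformly in $(\alpha,c)\in\mathbb H_<$, which is exactly what \eqref{est1: U_s-c, A} provides. This uniform boundedness is what fails for $m\ge\sqrt2$ and forces the logarithmic losses there.
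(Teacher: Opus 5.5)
Your proposal is correct and follows essentially the same route as the paper: it bounds $\varphi_\pm$ and $\partial_Y\varphi_\pm$ directly from \eqref{CR100}, using the uniform bounds \eqref{est1: U_s-c, A} and Lemma \ref{con:CR2}, and notes that the weight $(U_s-c)A$ is comparable to $1$. The only difference is cosmetic: the paper recovers $\partial_Y^2\varphi_\pm$ from the equation ${Ray}_0[\varphi_\pm]=F_\pm$, whereas you differentiate \eqref{CR100} a second time, and both versions rely on the same bounded coefficients.
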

\begin{proof}

According to \eqref{est1: U_s-c, A}, we have
$$|\varphi_\pm(Y)|\lesssim
\big|\int_{\pm\infty}^Y\big(
\int_{\pm \infty}^Z |F_\pm(Z')|dZ'\big)
dZ\big|,$$
which combines with Lemma \ref{con:CR2} to get
\begin{align}\label{est1: varphi}
\|\varphi_\pm\|_{L^\infty_{\theta,\pm}}\lesssim \|F_\pm\|_{L^\infty_{\theta,\pm}}.
\end{align}
Taking one derivative of \eqref{CR100}, we obtain
\begin{align*}
	\partial_Y\varphi_\pm(Y)=\partial_Y U_s\int_{\pm\infty}^Y\big(
\frac{A(Z)}{{(U_s-c)}^2}
\int_{\pm \infty}^Z F_\pm(Z')dZ'\big)
dZ
+
\frac{A(Y)}{{(U_s-c)}}
\int_{\pm \infty}^Y F_\pm(Z)dZ.
\end{align*}
Using \eqref{est1: U_s-c, A} and Lemma \ref{con:CR2}, we have
\begin{align}\label{est1: pa_Y varphi}
\|\partial_Y \varphi_\pm\|_{L^\infty_{\theta,\pm}}\lesssim \|F_\pm\|_{L^\infty_{\theta,\pm}}.
\end{align}

Using the equation \eqref{equ: non-homo} and estimates \eqref{est1: varphi}, \eqref{est1: pa_Y varphi}, \eqref{est1: U_s-c, A} to derive
\[\|\partial_Y^2 \varphi_\pm\|_{L^\infty_{\theta,\pm}}\lesssim \|F_\pm\|_{L^\infty_{\theta,\pm}}.
\]

Combining all the above estimates, we get the desired results.
 
\end{proof}

\medskip

Based on the above lemma, we are in a position to give the proof of Proposition \ref{pro: varphi} for the case $m<\sqrt{2}$.

\underline{{\it Proof of Proposition \ref{pro: varphi} for the case $m<\sqrt{2}$.}}

 By \eqref{CR4} and  $\chi_{\pm,0}=U_s-c$, we have  
  $$\|{Ray}_0[\chi_{\pm,1}]\|_{L^\infty_{\theta,\pm}}\lesssim 1.$$
  which, combined with Lemma \ref{con:CR1}, gives
   $$\|\chi_{\pm,1}\|_{Y_{\theta,\pm}} \lesssim 1.$$

Noticing
\[\big\|(U_s-c)\big({A(Y)}^{-1}-\gamma_\pm^{-2} \big)\chi_{\pm,0}\big\|_{L^\infty_{\theta,\pm}}\lesssim 1,
\]
we use \eqref{def: F_n+2} to get
\[
\|{Ray}_0[\chi_{\pm,2}]\|_{L^\infty_{\theta,\pm}}\lesssim 1,
\]
Applying Lemma \ref{con:CR1} to have 
\beno
\|\chi_{\pm,2}\|_{Y_{\theta,\pm}} \lesssim 1.
\eeno

For $\chi_{\pm,n}$ with $n\geq 3$, we use the iterative method to get their estimates. Suppose
 \beno
 \chi_{\pm,n},\quad \chi_{\pm,n+1},\quad \partial_Y\chi_{\pm,n+1}\in {L^\infty_{\theta,\pm}},
 \eeno
 by \eqref{CR4} and \eqref{def: F_n+2}, we get
\beno
\|{Ray}_0[\chi_{\pm,n+2}]\|_{L^\infty_{\theta,\pm}} \lesssim\|\partial_Y \chi_{\pm,n+1}\|_{L^\infty_{\theta,\pm}}+\|\chi_{\pm,n}\|_{L^\infty_{\theta,\pm}} 
+\|\chi_{\pm,n+1}\|_{L^\infty_{\theta,\pm}} .
\eeno
Suppose
\beno
\chi_{\pm,n},\quad \chi_{\pm,n+1}\in {Y_{\theta,\pm}},
\eeno 
by Lemma \ref{con:CR1}, we know
\beno
\|\chi_{\pm,n+2}\|_{Y_{\theta,\pm}} \lesssim \|\chi_{\pm,n}\|_{Y_{\theta,\pm}}
+\|\chi_{\pm,n+1}\|_{Y_{\theta,\pm}}.
\eeno
Combining all the above estimates, we get
\begin{align*}
\|\chi_{\pm,1}\|_{Y_{\theta,\pm}} \lesssim 1,\quad
    \|\chi_{\pm,2}\|_{Y_{\theta,\pm}} \lesssim 1,
\end{align*}
and
\begin{align*}    
     \|\chi_{\pm,n+2}\|_{Y_{\theta,\pm}} \lesssim\|\chi_{\pm,n}\|_{Y_{\theta,\pm}}
+\|\chi_{\pm,n+1}\|_{Y_{\theta,\pm}}
, \ n\geq 1,
\end{align*}
which implies that
\beno
\|\chi_{\pm,n}\|_{Y_{\theta,\pm}}\leq C^n , \ n \geq 1.
\eeno

Recalling that
\begin{equation}
\begin{aligned}
\chi_{\pm}(Y)=\sum_{n=0}^{\infty} {(\pm  \beta_\pm)}^n \chi_{\pm,n}(Y),\quad \chi_\pm (Y)-(U_s(Y)-c)=\sum_{n=1}^{\infty} {(\pm  \beta_\pm)}^n \chi_{\pm,n}(Y),
\end{aligned}
\end{equation}
we get
\begin{align*}
\|\chi_\pm (Y)-(U_s(Y)-c)\|_{Y_{\th,\pm}}\lesssim \sum_{n=1}^{\infty}\al^n \lesssim \al,
\end{align*}
when $\al$ is small enough. Thus, we get that $\chi_{\pm}$ is well-defined in $Y_{\th,\pm}$, which completes the proof.
 
 \qed

\medskip

\subsection{The case $m\geq \sqrt{2}$}
We first introduce two points $Y_c$ and $\widetilde{Y_{c,\pm}}$ which satisfy that
\beno
U_s(Y_c)=c_r,\quad U_s(\widetilde{Y_{c,\pm}})=  \pm \frac{1}{m} +c_r.
\eeno
Moreover, $\widetilde{Y_{c,\pm}}$ is the root of the main part of $\Re A(Y)$ (i.e., $1-m^2(U_s(\widetilde{Y_{c,\pm}})-c_r)^2=0$).

When $m\geq \sqrt{2}$, we know $|U_s-c|$ and $|A(Y)|$ have a lower bound $c_i$ instead of $1$ (see \eqref{est2: U_s-c, A}).  By the same argument as in Lemma \ref{con:CR2}, we have 
\begin{lemma}  \label{con:CR4}
Let $(\alpha,c)\in \mathbb{H}_\geq$,  $F_{\pm}\in {L^\infty_{\theta,\pm}}$. For each fixed $m\geq \sqrt2$, it holds 
\begin{align}
\big\|
    \int_{\pm \infty}^Y \frac{F_{\pm}(Z)}{U_s(Z)-c}dZ\big\|_{L^\infty_{\theta,\pm}}&\lesssim |\log c_i|\|F_{\pm}\|_{L^\infty_{\theta,\pm}},\label{ineq1}\\
\big\|
    \int_{\pm \infty}^Y \frac{F_{\pm}(Z)}{A(Z)}dZ\big\|_{L^\infty_{\theta,\pm}}&\lesssim |\log c_i|\|F_{\pm}\|_{L^\infty_{\theta,\pm}},\label{ineq2}
\end{align}
 and
\begin{align}\label{ineq3}
\big\|
    \int_{\pm \infty}^Y \frac{F_{\pm}(Z)}{A(Z)(U_s(Z)-c)}dZ\big\|_{L^\infty_{\theta,\pm}}\lesssim |\log c_i|\|F_{\pm}\|_{L^\infty_{\theta,\pm}}.
    \end{align}
 
\end{lemma}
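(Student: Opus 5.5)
The plan is to reduce all three bounds to one-dimensional integrals of the form $\int du/(|u-u_0|+c_i)$ over a bounded interval. Each such integral is $\lesssim|\log c_i|$.

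\textbf{Step 1: reduction to a weight integral.} We treat the $+$ case; the $-$ case is symmetric. For $Y\ge0$ and any of the three weights $w\in\{(U_s-c)^{-1},A^{-1},A^{-1}(U_s-c)^{-1}\}$ we have
\begin{align*}
\Big|e^{\theta Y}\int_{+\infty}^Y F_+(Z)w(Z)\,dZ\Big|
\le \|F_+\|_{L^\infty_{\theta,+}}\int_Y^{\infty}e^{-\theta(Z-Y)}|w(Z)|\,dZ .
\end{align*}
So it suffices to show that
\begin{align*}
\sup_{Y\ge0}\int_Y^\infty e^{-\theta(Z-Y)}|w(Z)|\,dZ\lesssim|\log c_i| .
\end{align*}
Since $e^{-\theta(Z-Y)}\le1$, this follows if we bound $\int_{\mathbb R}|w|\,dZ$ over a fixed compact set $J$. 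Outside $J$ we will have $|w|\lesssim1$, and there the exponential weight integrates to $O(1)$ exactly as in Lemma \ref{con:CR2}.

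\textbf{Step 2: locating the singularities.} Let $m\ge\sqrt2$ and $(\alpha,c)\in\mathbb H_\ge$. Then $c_r$ is close either to $0$ (when $m=\sqrt2$) or to $c_*\in(0,1)$ (when $m>\sqrt2$), and $c_i\to0$. The possible near-zeros of $U_s-c$ and $A=(1-m(U_s-c))(1+m(U_s-c))$ occur only where $U_s$ is near one of the three values $c_r$, $c_r+1/m$, $c_r-1/m$. These values are mutually separated by at least $1/m$, uniformly in $(\alpha,c)$. We keep only those values lying in $(-1,1)$. Those lying outside $[-1,1]$ do not occur on the range of $U_s$. We must check that no critical value sits at $\pm1$; this holds because $A_{\pm\infty}$ stays away from zero by \eqref{est: gamma}. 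Since $U_s=\tanh Y$, the corresponding points $Y_c,\widetilde{Y_{c,\pm}}$ lie in a fixed compact set. We choose disjoint neighbourhoods of them, with union $J$. On $J$ we have $\partial_YU_s\ge C^{-1}$, and off $J$ the estimates $|U_s-c|\gtrsim1$ and $|A|\gtrsim1$ hold.

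\textbf{Step 3: local lower bounds and change of variables.} Near $Y_c$ we use $|U_s-c|\ge\frac12\big(|U_s-c_r|+c_i\big)$, and $|A|\gtrsim1$ there. Near $\widetilde{Y_{c,\pm}}$ we have $|U_s-c|\gtrsim1$, and
\begin{align*}
A=-m^2\big(U_s-c_r\mp\tfrac1m-ic_i\big)\big(U_s-c_r\pm\tfrac1m-ic_i\big),
\end{align*}
in which the second factor is $\approx1$. Hence $|A|\gtrsim|U_s-c_r\mp\tfrac1m|+c_i$. For the weight $A^{-1}(U_s-c)^{-1}$, only one factor is small on each neighbourhood, because the critical values are separated. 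Changing variables $u=U_s(Z)$, with $dZ=du/\partial_ZU_s\lesssim du$, each local contribution is bounded by
\begin{align*}
\int_{-2}^{2}\frac{du}{|u-u_0|+c_i}\lesssim 1+|\log c_i| .
\end{align*}
Summing the finitely many pieces gives \eqref{ineq1}, \eqref{ineq2} and \eqref{ineq3}.

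\textbf{Expected main obstacle.} The delicate step is the \emph{uniform} lower bound for $|A|$ near $\widetilde{Y_{c,\pm}}$. One has to verify that the imaginary part, of size $2m^2|U_s-c_r|c_i\approx c_i$, is not cancelled, and that the real part vanishes only to first order. Both follow from the factorization above, together with the separation of the three critical values and the positivity of $\partial_YU_s$ on $J$.
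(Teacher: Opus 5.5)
Your argument is correct and reaches the same one-dimensional bound $\int du/(|u-u_0|+c_i)\lesssim|\log c_i|$ as the paper, but you organize the singular weights differently.

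The paper treats the composite weights algebraically, using the partial-fraction identities
\[
\frac1A=\frac1{2m}\Big(\frac1{U_s-c+\frac1m}-\frac1{U_s-c-\frac1m}\Big),\qquad
\frac1{A(U_s-c)}=\frac1{U_s-c}-\frac12\,\frac1{U_s-c-\frac1m}-\frac12\,\frac1{U_s-c+\frac1m}.
\]
Each weight is therefore a fixed linear combination of simple poles $1/(U_s-U_s(Y_0)-ic_i)$ with $Y_0\in\{Y_c,\widetilde{Y_{c,\pm}}\}$, so \eqref{ineq2}--\eqref{ineq3} reduce to \eqref{ineq1}. The paper proves \eqref{ineq1} alone by splitting at $Y_c+1$ and applying the mean value theorem in the form $|U_s(Z)-c|\approx|Z-Y_c|+c_i$; this does the same job as your substitution $u=U_s(Z)$.

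You instead localize to disjoint neighbourhoods of the three critical points and bound the weights multiplicatively. The $1/m$-separation of the critical values ensures that only one factor degenerates on each piece.

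What each route buys:
\begin{itemize}
\item The partial-fraction route makes the separation bookkeeping automatic and needs only a single estimate.
\item Your route does not depend on an exact linear identity. It would apply unchanged to any weight that is a product of finitely many factors whose real parts have separated simple zeros.
\end{itemize}

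Your fixed compact set $J$, together with the check via \eqref{est: gamma} that no critical value approaches $\pm1$, is also the precise form of the localization the paper needs. The paper's stated ordering $-\frac12<\widetilde{Y_{c,-}}<Y_c<\widetilde{Y_{c,+}}<1$ is not accurate as written: for $m=\sqrt2$ one has $\widetilde{Y_{c,-}}\approx-0.88$. Only compactness and a lower bound on $\partial_YU_s$ near these points are actually used, so this does not affect the paper's argument.
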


\begin{proof}
A direct calculation gives that
\begin{align*}
\f{1}{U_s(Y)-c}=&\f{1}{U_s(Y)-U_s(Y_c)-ic_i},\\
\f{1}{A(Y)}=&\f{1}{2m}\left(\f{1}{U_s(Y)-c+\f{1}{m}}-\f{1}{U_s(Y)-c-\f{1}{m}}\right),\\
=&\f{1}{2m}\left(\f{1}{U_s(Y)-U_s(\widetilde{Y_{c,-}})-ic_i}-\f{1}{U_s(Y)-U_s(\widetilde{Y_{c,+}})-ic_i}\right),\\
\f{1}{A(Y)(U_s(Y)-c)}=&\f{1}{U_s(Y)-c}-\f12\f{1}{U_s(Y)-c-\f1m}-\f12\f{1}{U_s(Y)-c+\f1m}\\
=&\f{1}{U_s(Y)-U_s(Y_c)-ic_i}-\f12\f{1}{U_s(Y)-U_s(\widetilde{Y_{c,+}})-ic_i}-\f12\f{1}{U_s(Y)-U_s(\widetilde{Y_{c,-}})-ic_i}
\end{align*}
For $(\alpha,c)\in \mathbb{H}_\geq$ and fixed $m\geq \sqrt2$, we have 
\begin{align*}
&-\f12<\widetilde{Y_{c,-}}<Y_c<\widetilde{Y_{c,+}}<1,\\
 &U_s'(Y_{c,-}),~U_s'(Y_{c}),~U_s'(Y_{c,+})\geq c_0>0.
\end{align*}
For this reason, we only prove the case 
\begin{align*}
\| \int_{+ \infty}^Y \frac{F_{+}(Z)}{U_s(Z)-c}dZ\big\|_{L^\infty_{\theta,+}}&\lesssim |\log c_i|\|F_{+}\|_{L^\infty_{\theta,+}},
\end{align*}
in \eqref{ineq1}. The estimates in \eqref{ineq2}-\eqref{ineq3} can be handled by the same process. 

For $Y \geq Y_c+1$, we notice $|U_s-c|\approx 1$. By Lemma \ref{con:CR2}, we get
$$\big|
    \int_{+ \infty}^Y \frac{F_{+}(Z)}{U_s(Z)-c}dZ\big|
   \lesssim \int_{+ \infty}^Y |F_{+}(Z)|dZ
    \lesssim \|F_{+}\|_{L^\infty_{\theta,+}}e^{-\theta Y}.$$
For $0 \leq Y\leq Y_{c}+1$, noticing $e^{\mp \theta Y}\approx 1$, we have
\begin{equation}
\begin{aligned}
\big|
    \int_{+ \infty}^Y \frac{F_{+}(Z)}{U_s(Z)-c}dZ\big|
  & \lesssim 
    \int_{Y_c+1}^{+ \infty}\big|\frac{F_{+}(Z)}{U_s(Z)-c}\big|dZ
   +
    \int_{0}^{Y_c+1} \big|\frac{F_{+}(Z)}{U_s(Z)-c}\big|dZ\\
   & \lesssim \|F_{+}\|_{L^\infty_{\theta,+}}e^{-\theta Y_{0,+}}+
    \|F_{+}\|_{L^\infty}\int_{0}^{Y_c+1} \big|\frac{1}{U_s(Z)-c}\big|dZ.
    \label{CR21}
\end{aligned}
\end{equation}
For $0\leq Z \leq Y_c+1$, by the Mean Value Theorem, we notice  $|U_s(Z)-c_r|\approx |Z-Y_c|$, which implies $|U_s(Z)-c|\approx |Z-Y_c|+c_i$. Therefore, we obtain
\begin{equation}
\begin{aligned}
\int_{0}^{Y_c+1} \big|\frac{1}{U_s(Z)-c}\big|dZ
\lesssim \int_{0}^{2Y_c+1} \frac{1}{Z+c_i}dZ
\lesssim |\log c_i|.
\label{CR22}
\end{aligned}
\end{equation}
By \eqref{CR21} and \eqref{CR22}, we derive
\begin{equation*}
\begin{aligned}
\big|
    \int_{+ \infty}^Y \frac{F_{+}(Z)}{U_s(Z)-c}dZ\big|
   \lesssim |\log c_i| \|F_{+}\|_{L^\infty_{\theta,+}} , \ Y\leq Y_c+1.
\end{aligned}
\end{equation*}

Combining all the above estimates, we get
$$\big|
    \int_{+ \infty}^Y \frac{F_{+}(Z)}{U_s(Z)-c}dZ\big|
    \lesssim |\log c_i|\|F_{+}\|_{L^\infty_{\theta,+}}e^{-\theta Y}, \ Y \in \mathbb{R}_+.$$
Here, we finish the proof.
\end{proof}
\begin{lemma}   \label{con:G}
Let $(\alpha,c)\in \mathbb{H}_\geq$ and $G(Y)\in {C^1[Y, Y_1]}$ with $Y<Y_c<Y_1$. It holds that 
\begin{align*}
\big|\int^{Y_1}_Y\frac{G(Z)}{{(U_s-c)}^2}dZ-\frac{G(Y)}{(U_s-c)\partial_YU_s}\big|\lesssim|\log c_i|(\|G\|_{L^\infty([Y, Y_1])}+\|\pa_YG\|_{L^\infty([Y, Y_1])}).
\end{align*} 
\end{lemma}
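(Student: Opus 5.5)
The plan is to integrate by parts against an exact derivative. Since $\partial_Z\big((U_s-c)^{-1}\big)=-\partial_ZU_s/(U_s-c)^2$, we write
\begin{align*}
\frac{G(Z)}{(U_s-c)^2}=-\frac{G(Z)}{\partial_ZU_s}\,\partial_Z\Big(\frac{1}{U_s-c}\Big).
\end{align*}
This needs $\partial_YU_s$ to be bounded below on $[Y,Y_1]$. In the application $Y,Y_1$ lie in a fixed compact neighbourhood of $Y_c$ (recall $-\frac12<\widetilde{Y_{c,-}}<Y_c<\widetilde{Y_{c,+}}<1$), on which $\partial_YU_s\ge c_0>0$. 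If the interval is unbounded, we first split off the part far from $Y_c$, where $|U_s-c|\approx1$ and the integral is trivially bounded by $\|G\|_{L^\infty}$ times a finite length (or uses exponential decay via Lemma \ref{con:CR2}).

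Integrating by parts on $[Y,Y_1]$ then gives
\begin{align*}
\int_Y^{Y_1}\frac{G}{(U_s-c)^2}dZ=\frac{G(Y)}{(U_s-c)(Y)\,\partial_YU_s(Y)}-\frac{G(Y_1)}{(U_s-c)(Y_1)\,\partial_YU_s(Y_1)}+\int_Y^{Y_1}\frac{1}{U_s-c}\,\partial_Z\Big(\frac{G}{\partial_ZU_s}\Big)dZ.
\end{align*}
The first term is exactly the one subtracted in the statement. For the boundary term at $Y_1$, if $Y_1$ stays a fixed distance from $Y_c$ then $|U_s(Y_1)-c|\gtrsim1$ and the term is $O(\|G\|_{L^\infty})$. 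If $Y_1$ is allowed to approach $Y_c$ we would only have $|U_s(Y_1)-c|\gtrsim c_i$, which is too weak. In that case the statement is presumably meant with $Y_1$ fixed (for example $Y_1=Y_c+1$), and I would state this normalization explicitly, noting that the application has $Y_1$ a fixed distance past $Y_c$.

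For the remaining integral, $\partial_Z(G/\partial_ZU_s)$ is bounded by $C(\|G\|_{L^\infty}+\|\partial_YG\|_{L^\infty})$, using $U_s\in C^2$ and $\partial_YU_s\ge c_0$. As in \eqref{CR22}, the mean value theorem gives $|U_s(Z)-c|\approx|Z-Y_c|+c_i$, so
\begin{align*}
\int_Y^{Y_1}\frac{dZ}{|U_s-c|}\lesssim\int_{-C}^{C}\frac{dZ}{|Z|+c_i}\lesssim|\log c_i|.
\end{align*}
Combining the three pieces yields the claim.

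The main obstacle is the boundary term at $Y_1$ and the lower bound on $\partial_YU_s$. Both require $[Y,Y_1]$ to sit in a compact region with $Y_1$ separated from $Y_c$, and I would make this explicit as the standing convention for the lemma. Everything else reduces to the logarithmic estimate already used in Lemma \ref{con:CR4}.
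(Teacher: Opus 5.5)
Your proposal is correct and is essentially the paper's proof: the same integration by parts against $\partial_Z(U_s-c)^{-1}$, with the boundary term at $Y_1$ and the remaining integral controlled by the logarithmic bound \eqref{CR22}. The hypotheses you make explicit ($\partial_YU_s\ge c_0$ on a bounded interval, and $Y_1$ at a fixed distance from $Y_c$, as for $Y_1=Y_c\pm1$ in the application) are what the paper uses tacitly. They are genuinely needed, since for $G=\partial_ZU_s$ the remainder equals $-1/(U_s(Y_1)-c)$, which is of size $1/c_i$ when $Y_1$ is close to $Y_c$.
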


\begin{proof}
By integration by parts, we have
\begin{align*}
\int^{Y_1}_Y\frac{G(Z)}{{(U_s-c)}^2}dZ=-\frac{G(Y)}{(U_s-c)\partial_YU_s}\Bigg|^{Y_1}_Y+\int^{Y_1}_Y\frac{1}{{(U_s-c)}}\partial_Z\big(\frac{G}{\partial_YU_s}\big)dZ,
\end{align*}
which implies that
\begin{align*}
&\big|\int^{Y_1}_Y\frac{G(Z)}{{(U_s-c)}^2}dZ-\frac{G(Y)}{(U_s-c)\partial_YU_s}\big|\\
\lesssim& \big|\f{G(Y_1)}{(U_s(Y_1)-c)\pa_Y U_s(Y_1)}\big| +(\|G\|_{L^\infty([Y, Y_1])}+\|\pa_YG\|_{L^\infty([Y, Y_1])})\int_{Y}^{Y_1}\f{1}{|U_s-c|}dZ\\
\lesssim& |\log c_i|(\|G\|_{L^\infty([Y, Y_1])}+\|\pa_YG\|_{L^\infty([Y, Y_1])}).
\end{align*}
 
\end{proof}

\medskip

According to Lemma \ref{con:CR4}, we derive the following two lemmas.

\begin{lemma} \label{con:CR7}
Let $\chi_{\pm,n},\chi_{\pm,n+1},\partial_Y  \chi_{\pm,n+1}\in {L^\infty_{\theta,\pm}} $.  Then for each fixed $m\geq \sqrt2$, we have
\beno
\|\chi_{\pm,n+2}\|_{Y_{\theta,\pm}} \lesssim |\log c_i|^2
\big(
\|\partial_Y \chi_{\pm,n+1}\|_{L^\infty_{\theta,\pm}}+\|\chi_{\pm,n+1}\|_{L^\infty_{\theta,\pm}} 
+\|\chi_{\pm,n}\|_{L^\infty_{\theta,\pm}} \big).
\eeno
Especially,  when
 $\chi_{\pm,n},\chi_{\pm,n+1}\in {Y_{\theta,\pm}} $, we have
\beno
\|\chi_{\pm,n+2}\|_{Y_{\theta,\pm}} \lesssim |\log c_i|^2
\big(
\| \chi_{\pm,n+1}\|_{Y_{\theta,\pm}}
+\|\chi_{\pm,n}\|_{Y_{\theta,\pm}} \big).
\eeno
\end{lemma}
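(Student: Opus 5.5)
The plan is to bound the right-hand side $F_{\pm,n+2}$ in \eqref{def: F_n+2} in a form adapted to the explicit solution formula \eqref{CR100}, and then estimate the three components of the $Y_{\theta,\pm}$ norm separately. The logarithmic losses will come from Lemma \ref{con:CR4}. We do not bound $F_{\pm,n+2}$ in $L^\infty_{\theta,\pm}$ directly, since $A^{-1}$ and $\partial_Y(A^{-1})$ are large near $\widetilde{Y_{c,\pm}}$. Instead we use the structure of the source. Write
\[
F_{\pm,n+2}=(U_s-c)\Big(2A^{-1}\partial_Y\chi_{\pm,n+1}+\partial_Y(A^{-1})\chi_{\pm,n+1}\Big)-(U_s-c)\big(A^{-1}-\gamma_\pm^{-2}\big)\chi_{\pm,n}.
\]
The term with $\partial_Y(A^{-1})$ is handled by integrating by parts once in $\int_{\pm\infty}^Z F_{\pm,n+2}$, exactly as was done for $T_1$ in the proof of Proposition \ref{con:disp}. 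This turns $(U_s-c)\chi_{\pm,n+1}\partial_Y(A^{-1})$ into the boundary term $A^{-1}(U_s-c)\chi_{\pm,n+1}$ plus integrals of $A^{-1}\partial_Y\big((U_s-c)\chi_{\pm,n+1}\big)$.

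After this, $\int_{\pm\infty}^Z F_{\pm,n+2}$ is a sum of a boundary term $A^{-1}(U_s-c)\chi_{\pm,n+1}$ and integrals of the form $\int F/A$ and $\int F$, where $F\in L^\infty_{\theta,\pm}$ has norm at most $\|\chi_{\pm,n+1}\|_{L^\infty_\theta}+\|\partial_Y\chi_{\pm,n+1}\|_{L^\infty_\theta}+\|\chi_{\pm,n}\|_{L^\infty_\theta}$. Lemma \ref{con:CR4} and Lemma \ref{con:CR2} bound these integrals by $|\log c_i|$ times that quantity. This is legitimate because $A^{-1}-\gamma_\pm^{-2}$ decays exponentially at $\pm\infty$ and its singular part is again of the type in \eqref{ineq2}.

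Next we multiply by $A/(U_s-c)^2$ and integrate again from $\pm\infty$. The boundary term contributes $\int (U_s-c)^{-1}\chi_{\pm,n+1}$, which gives one more factor $|\log c_i|$ by \eqref{ineq1}. The remaining pieces are $\int \frac{A}{(U_s-c)^2}H$ with $H=\int_{\pm\infty}^Z(\cdots)$. Near $Y_c$ this has a genuinely non-integrable $(U_s-c)^{-2}$ singularity, and here Lemma \ref{con:G} is used. Either we exploit the cancellation $A\,\partial_Y(\cdot)$ structure, or we observe that after multiplication by the prefactor $(U_s-c)$ in \eqref{CR100}, the term $\frac{G(Y)}{(U_s-c)\partial_YU_s}$ from Lemma \ref{con:G} becomes bounded. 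Either way, $\chi_{\pm,n+2}$ is bounded by $|\log c_i|^2$ times the data norm, and the exponential weight is preserved away from the critical layer as in Lemma \ref{con:CR4}.

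The derivative $\partial_Y\chi_{\pm,n+2}$ is handled from the formula
\[
\partial_Y\chi_{\pm,n+2}=\partial_YU_s\int_{\pm\infty}^Y\frac{A}{(U_s-c)^2}H\,dZ+\frac{A}{U_s-c}H(Y).
\]
The first term was just bounded. The second term is $O(|\log c_i|)$ once we use that $H(Y)$ near $Y_c$ is regular. To see this, write $H(Y)=H(Y_c)+O(|Y-Y_c|)$ and check that the combination with the first term cancels the $1/(U_s-c)$ singularity. That check is again Lemma \ref{con:G} applied with $G=AH$, where $\|\partial_Y(AH)\|$ is controlled by the equation. Finally, $(U_s-c)A\partial_Y^2\chi_{\pm,n+2}$ is read off from the equation $Ray_0[\chi_{\pm,n+2}]=F_{\pm,n+2}$, expanded as
\[
(U_s-c)A^{-1}\partial_Y^2\chi+\dots=F.
\]
After multiplying by $A^2$, each term is bounded by the already-obtained norms together with $|A|\lesssim1$, $|A\,\partial_Y(A^{-1})|\lesssim|A|^{-1}$. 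For this reason I would actually weight as in the norm: $|(U_s-c)A\,\partial_Y^2\chi|$ is controlled because the $A^{-1}$ singularities are exactly compensated.

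The main obstacle is the $(U_s-c)^{-2}$ non-integrability at $Y_c$ and the $A^{-2}$ produced by $\partial_Y(A^{-1})$ near $\widetilde{Y_{c,\pm}}$. I would first try to remove both by integration by parts, so that only first-order poles remain. The second statement follows immediately from the first, since $\|\cdot\|_{L^\infty_\theta}$ and $\|\partial_Y\cdot\|_{L^\infty_\theta}$ are dominated by $\|\cdot\|_{Y_{\theta,\pm}}$.
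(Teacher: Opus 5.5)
Your proposal is correct and follows essentially the paper's route: you integrate by parts in $\int_{\pm\infty}^Y F_{\pm,n+2}$ to peel off $A^{-1}(U_s-c)\chi_{\pm,n+1}$, whose contribution after the second integration is handled by \eqref{ineq1}; you bound the remainder via Lemma \ref{con:CR4}; you treat the $(U_s-c)^{-2}$ weight near $Y_c$ with Lemma \ref{con:G} applied to $A$ times that remainder; and you read off $(U_s-c)A\,\partial_Y^2\chi_{\pm,n+2}$ from $A^2Ray_0[\chi_{\pm,n+2}]=A^2F_{\pm,n+2}$, all exactly as the paper does. One slip to fix in the derivative step: neither $\partial_YU_s\int_{\pm\infty}^Y\frac{A}{(U_s-c)^2}H\,dZ=\partial_YU_s\,\chi_{\pm,n+2}/(U_s-c)$ nor $\frac{A}{U_s-c}H(Y)$ is bounded by itself, since each carries an uncompensated $(U_s-c)^{-1}$ at $Y_c$; only their sum is $O(|\log c_i|^2)$ times the data norm, through the cancellation of the boundary term $\frac{G(Y)}{(U_s-c)\partial_YU_s}$ in Lemma \ref{con:G}, which is the check you end up invoking.
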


\medskip

\begin{lemma} \label{con:CR6} 
For each fixed $m\geq \sqrt2$,      we have $\chi_{\pm,1},\partial_Y  \chi_{\pm,1},\chi_{\pm,2},\partial_Y  \chi_{\pm,2}\in {L^\infty_{\theta,\pm}} $. Moreover, we get
\begin{equation}
\begin{aligned}
&\| \chi_{\pm,1}\|_{Y_{\theta,\pm}}
\lesssim
{|\log c_i|}, \quad
\| \chi_{\pm,2}\|_{Y_{\theta,\pm}}
\lesssim
{|\log c_i|}^2.
\end{aligned}
\end{equation}
\end{lemma}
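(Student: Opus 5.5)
We work directly from the explicit formulas already derived in the proof of Proposition \ref{con:disp}:
\begin{align*}
\chi_{\pm,1}(Y)=(U_s-c)\int_{\pm\infty}^Y\frac{(U_s-c)^2-(1\mp c)^2}{(U_s-c)^2\gamma_\pm^2}\,dZ,
\qquad
\chi_{\pm,2}(Y)=(U_s-c)\int_{\pm\infty}^Y G_\pm(Z)\,dZ,
\end{align*}
with $G_\pm$ given in \eqref{CR46}. The strategy is to bound $\chi_{\pm,1}$, $\partial_Y\chi_{\pm,1}$, $\chi_{\pm,2}$, $\partial_Y\chi_{\pm,2}$ in $L^\infty_{\theta,\pm}$, and then to recover the weighted second-derivative part $\|(U_s-c)A\,\partial_Y^2\chi\|_{L^\infty_{\theta,\pm}}$ from the equation. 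Writing $Ray_0[\chi]=F$ in expanded form gives
\begin{align*}
(U_s-c)A^{-1}\partial_Y^2\chi=F-(U_s-c)\partial_Y(A^{-1})\partial_Y\chi+\partial_Y(A^{-1}\partial_YU_s)\chi .
\end{align*}
Multiplying by $A^2$ makes every coefficient bounded, since $A\,\partial_Y(A^{-1})$ and $A\,\partial_Y(A^{-1}\partial_Y U_s)$ are smooth. Hence the second-derivative part is controlled by $\|F\|_{L^\infty_{\theta}}+\|\partial_Y\chi\|_{L^\infty_\theta}+\|\chi\|_{L^\infty_\theta}$, and the task reduces to the $L^\infty_\theta$ bounds on $\chi$, $\partial_Y\chi$ and on the forcing $F_{\pm,1},F_{\pm,2}$.

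\textbf{Bounds for $\chi_{\pm,1}$.} Write the integrand as
\begin{align*}
\gamma_\pm^{-2}\Bigl(1-\frac{(1\mp c)^2}{(U_s-c)^2}\Bigr)=\gamma_\pm^{-2}\,\frac{(U_s-c-(1\mp c))(U_s-c+(1\mp c))}{(U_s-c)^2}.
\end{align*}
The factor $U_s\mp1$ decays like $e^{\mp\theta Y}$ by \eqref{eq:S-A}. Away from the critical point $Y_c$, the integrand is therefore $O(e^{\mp\theta Z})$, so the tail integral is $O(e^{\mp\theta Y})$ by Lemma \ref{con:CR2}.

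The difficulty is the double pole $(U_s-c)^{-2}$ near $Y_c$, which only arises on the side containing $Y_c$. For $Y$ near $Y_c$ we split the integral at a point $Y_1$ beyond $Y_c$ and apply Lemma \ref{con:G} with $G=(U_s-c)^2-(1\mp c)^2$, which is $C^1$. This gives
\begin{align*}
\int_Y^{Y_1}\frac{G}{(U_s-c)^2}\,dZ=\frac{G(Y)}{(U_s-c)\partial_YU_s}+O(|\log c_i|).
\end{align*}
After multiplying by the prefactor $(U_s-c)$, the singular boundary term becomes bounded, and the remainder is $O(|\log c_i|)$. This yields $\|\chi_{\pm,1}\|_{L^\infty_\theta}\lesssim|\log c_i|$.

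For $\partial_Y\chi_{\pm,1}$ we use \eqref{CR6}. Its second term, $\bigl((U_s-c)^2-(1\mp c)^2\bigr)/\bigl((U_s-c)\gamma_\pm^2\bigr)$, has only a simple pole. Its first term contains the same integral as above, now multiplied by $\partial_YU_s$ rather than $(U_s-c)$, so it is of size $|U_s-c|^{-1}$ near $Y_c$.

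\textbf{Main obstacle.} In the norm $Y_\theta$, the first derivative $\partial_Y\chi$ enters unweighted in $L^\infty$, yet $\partial_Y\chi_{\pm,1}$ appears genuinely to have a $(U_s-c)^{-1}$ singularity of size $c_i^{-1}$. I would handle this as follows.

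1. Combine the two singular pieces of \eqref{CR6}. Using the expansion from Lemma \ref{con:G}, the boundary-term singularity $\partial_YU_s\cdot G(Y)/\bigl((U_s-c)\partial_YU_s\bigr)$ should cancel against the second term, which is $G(Y)/(U_s-c)$ up to the factor $\gamma_\pm^{-2}$.
2. Verify the sign carefully, with the integration running from $\pm\infty$ to $Y$. I expect the leading parts to cancel exactly, leaving only $O(|\log c_i|)$.
3. If the cancellation instead fails, I would fall back on the fact that only the side containing $Y_c$ is affected. Near $Y=0$, $|U_s-c|\approx|c|$, which in $\mathbb H_\ge$ is comparable to $c_*$ or $\delta$. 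In that case the singularity would be absorbed via $|\log c_i|$ bounds from Lemma \ref{con:CR4}, applied through the representation \eqref{CR100}.

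\textbf{Bounds for $\chi_{\pm,2}$.} We handle $\chi_{\pm,2}$ in the same way as $\chi_{\pm,1}$, using \eqref{CR46}.

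1. The first term of $G_\pm$ is the $\chi_{\pm,1}$-integral, which is $O(|\log c_i|)$ after the same analysis.
2. The second term is $A(U_s-c)^{-2}\times(\text{an }O(e^{\mp\theta Y})\text{ integral})$.
3. We integrate $G_\pm$ once more and multiply by $(U_s-c)$. Lemma \ref{con:G} is applied again, now with $A$ times the bounded integral playing the role of $G$; this gives another logarithmic factor.
4. The result is $\|\chi_{\pm,2}\|+\|\partial_Y\chi_{\pm,2}\|\lesssim|\log c_i|^2$, with the same cancellation mechanism in $\partial_Y\chi_{\pm,2}$ via \eqref{pa_Y chi_2}.

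Finally, the forcings $F_{\pm,1}$ and $F_{\pm,2}$ are bounded by the same quantities times the coefficients $(U_s-c)A^{-1}$ and $(U_s-c)\partial_Y(A^{-1})$. Inserting these bounds into the identity for the second derivative above gives the weighted second-derivative bound, and hence the claimed $Y_{\theta,\pm}$ estimates for $\chi_{\pm,1}$ and $\chi_{\pm,2}$.
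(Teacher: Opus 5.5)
Your plan is essentially the paper's proof: you use Lemma \ref{con:G} near $Y_c$ for the double pole, you rely on the exact cancellation of the singular parts in $\partial_Y\chi_{\pm,1}$ and $\partial_Y\chi_{\pm,2}$, you split $G_\pm$ the same way, and you recover $(U_s-c)A\,\partial_Y^2\chi$ from the equation multiplied by $A^2$. The cancellation you only "expect" does hold, because $\int_{\pm\infty}^Y G(U_s-c)^{-2}\,dZ=-G(Y)/\bigl((U_s-c)\partial_YU_s\bigr)+O(|\log c_i|)$ near $Y_c$, so your fallback is unnecessary; it also would not have worked, since it cannot handle the $c_i^{-1}$ singularity at $Y_c$.

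Two small corrections. First, the first term of $G_\pm$ is $\chi_{\pm,1}/(U_s-c)$, which is of size $c_i^{-1}$ at $Y_c$ rather than $O(|\log c_i|)$. Its integral is nevertheless $O(|\log c_i|^2)$ by \eqref{ineq1}, and this, not Lemma \ref{con:G}, is where the second logarithm comes from. Second, the second-derivative bound must be stated with $\|A^2F\|_{L^\infty_{\theta,\pm}}$ in place of $\|F\|_{L^\infty_{\theta,\pm}}$, because $F_{\pm,1}=2(U_s-c)\partial_YU_s\,A^{-2}$ is not uniformly bounded in $c_i$.
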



We postpone the proof of Lemma  \ref{con:CR7} and Lemma  \ref{con:CR6} to a later section, and present the proof of Proposition \ref{pro: varphi} for the case $m\geq \sqrt{2}$ by using Lemma  \ref{con:CR7} and Lemma  \ref{con:CR6}. 
 
 \bigskip

\underline{\it Proof of Proposition \ref{pro: varphi} for the case $m\geq \sqrt{2}$.}

\begin{proof}
   
By Lemma \ref{con:CR7} and Lemma \ref{con:CR6}, we know the sequence $\{\|\chi_{\pm,n}\|_{Y_{\theta,\pm}} 
\}$ satisfies
\begin{equation*}
\begin{aligned}
\begin{cases}
&\|\chi_{\pm,n+2}\|_{Y_{\theta,\pm}} \lesssim {|\log c_i|^2}
\big(\|\chi_{\pm,n}\|_{Y_{\theta,\pm}}
+\|\chi_{\pm,n+1}\|_{Y_{\theta,\pm}}\big)
, \ n\geq 1, \\
& \|\chi_{\pm,1}\|_{Y_{\theta,\pm}} \lesssim {|\log c_i|},\\
    &\|\chi_{\pm,2}\|_{Y_{\theta,\pm}} \lesssim {|\log c_i|}^2,
\end{cases}
\end{aligned}
\end{equation*}
which implies that
\begin{equation*}
\begin{aligned}
\|\chi_{\pm,n}\|_{Y_{\theta,\pm}}\leq {\big(C|\log c_i|\big)}^{2n}, \ n \geq 1.
\end{aligned}
\end{equation*}
Recall that
\begin{equation*}
\begin{aligned}
\chi_{\pm}(Y)=\sum_{n=0}^{\infty} {(\pm  \beta_\pm)}^n \chi_{\pm,n}(Y), \quad\chi_\pm (Y)-(U_s(Y)-c)=\sum_{n=1}^{\infty} {(\pm  \beta_\pm)}^n \chi_{\pm,n}(Y),
\end{aligned}
\end{equation*}
we get
\begin{align*}
\|\chi_\pm (Y)-(U_s(Y)-c)\|_{Y_{\th,\pm}}\lesssim \sum_{n=1}^{\infty}\al^n|\log c_i|^{2n}\lesssim \al |\log c_i|^2,
\end{align*}
where we used that $\al|\log c_i|^2$ is small for $(\al,c)\in \mathbb{H}_\geq$.

\end{proof}

\subsection{Proof of  Lemma  \ref{con:CR7} }

In this subsection, we present the proof of Lemma  \ref{con:CR7}.

For convenience, define
\begin{align*}
 R(Y)\coloneqq&\big(\|\chi_{\pm,n+1}\|_{L^\infty_{\theta,\pm}}+\|\partial_Y\chi_{\pm,n+1}\|_{L^\infty_{\theta,\pm}}+\|\chi_{\pm,n}\|_{L^\infty_{\theta,\pm}} \big)
e^{\mp\theta Y},\\
 H_{n+1}(Y)\coloneqq& \int_{\pm \infty}^Y
(U_s-c)\big( 2{A(Z)}^{-1}\partial_Z\chi_{\pm,n+1}+\partial_Z({A(Z)}^{-1})\chi_{\pm,n+1}\big)dZ,\\
P_{n}(Y)\coloneqq &\int_{\pm \infty}^Y
(U_s-c)\big({A(Z)}^{-1}-\gamma_\pm^{-2} \big)\chi_{\pm,n}dZ\\
=&\int_{\pm \infty}^Y
(U_s-c)
\frac{m^2 
\big({(U_s-c)}^2-{(1\mp c)}^2 \big)
}{\gamma_\pm^{2} A(Z)}
\chi_{\pm,n}dZ.
\end{align*}

Next, we establish the asymptotic properties of  $H_{n+1}$. Integrating by parts to get
\begin{equation*}
\begin{aligned}
H_{n+1}(Y)=
&\int_{\pm \infty}^Y
2(U_s-c) {A(Z)}^{-1}\partial_Z\chi_{\pm,n+1}dZ
+\int_{\pm \infty}^Y (U_s-c)\chi_{\pm,n+1}d({A(Z)}^{-1})\\
=& \frac{(U_s-c)\chi_{\pm,n+1}}{A(Y)}+
\int_{\pm \infty}^Y
\frac{
(U_s-c) \partial_Z\chi_{\pm,n+1}
-\partial_Z U_s \chi_{\pm,n+1}}{A(Z)}dZ.
\end{aligned}
\end{equation*}
Then,  applying  Lemma \ref{con:CR4} and $|U_s-c|\lesssim 1$ to have
\begin{equation}
\begin{aligned}
&\big|H_{n+1}(Y)-\frac{(U_s(Y)-c)\chi_{\pm,n+1}(Y)}{A(Y)}\big| \lesssim|\log c_i|
\big(\|\chi_{\pm,n+1}\|_{L^\infty_{\theta,\pm}} 
+\|\partial_Y\chi_{\pm,n+1}\|_{L^\infty_{\theta,\pm}}\big)
e^{\mp\theta Y},\\
&\big|\pa_Y\big(H_{n+1}(Y)-\frac{(U_s(Y)-c)\chi_{\pm,n+1}(Y)}{A(Y)}\big)\big| \lesssim|A(Y)|^{-1}
\big(\|\chi_{\pm,n+1}\|_{L^\infty_{\theta,\pm}} 
+\|\partial_Y\chi_{\pm,n+1}\|_{L^\infty_{\theta,\pm}}\big)
e^{\mp\theta Y}.
\label{CR24}
\end{aligned}
\end{equation}

In addition, by Lemma \ref{con:CR4} and $|U_s-c|\lesssim 1$, we have
\begin{equation}
\begin{aligned}
|P_{n}(Y)|
\lesssim |\log c_i|
\|\chi_{\pm,n}\|_{L^\infty_{\theta,\pm}} 
e^{-\theta Y},\quad 
|\pa_YP_{n}(Y)|
\lesssim |A(Y)|^{-1}|U_s-c|
\|\chi_{\pm,n}\|_{L^\infty_{\theta,\pm}} 
e^{\mp\theta Y}.
\label{CR25}
\end{aligned}
\end{equation}

Combining \eqref{CR24} with \eqref{CR25} to obtain
 \begin{align}
&\big |H_{n+1}(Y)-P_{n}(Y)-\frac{(U_s(Y)-c)\chi_{\pm,n+1}(Y)}{A(Y)}\big|\lesssim |\log c_i| R(Y),\label{est1: H-P }\\
&\big |\pa_Y\big(H_{n+1}(Y)-P_{n}(Y)-\frac{(U_s(Y)-c)\chi_{\pm,n+1}(Y)}{A(Y)}\big)\big|\lesssim |A(Y)|^{-1} R(Y).\label{est2: H-P }
\end{align}

Rewrite the formula of  $\chi_{\pm ,n+2}$ as follows
\begin{align}
\chi_{\pm ,n+2}=&(U_s-c)\int_{\pm \infty}^Y\frac{A(Z)}{{(U_s-c)}^2}\big( H_{n+1}-P_{n}\big)dZ\label{CR42}\\
=&(U_s-c)\int_{\pm \infty}^Y\frac{\chi_{\pm,n+1}(Z)}{U_s-c}dZ\nonumber\\
&+(U_s-c)
\int_{\pm \infty}^{Y_c\pm 1}
\frac{A(Z)}{{(U_s-c)}^2}
\big( H_{n+1}(Y)-P_{n}(Y)-\frac{(U_s(Y)-c)\chi_{\pm,n+1}(Y)}{A(Y)}\big)dZ\nonumber\\
&+(U_s-c)
\int_{Y_c\pm 1}^Y
\frac{A(Z)}{{(U_s-c)}^2}
\big( H_{n+1}(Y)-P_{n}(Y)-\frac{(U_s(Y)-c)\chi_{\pm,n+1}(Y)}{A(Y)}\big)dZ\nonumber\\
=&I_1+I_2+I_3.\nonumber
\end{align}
Next, we give the estimates $I_i$ one by one. According to Lemma \ref{con:CR4}, we get
\begin{align*}
|I_1|\lesssim |\log c_i| \|\chi_{\pm,n+1}\|_{L^\infty_{\theta,\pm}} e^{\pm \th Y} \lesssim |\log c_i| R(Y).
\end{align*}
For $Z\in[Y_c+ 1,+\infty)\cup(-\infty, Y_c- 1]$, we have \beno
|U_s(Z)-c|\gtrsim1.
\eeno
  Based on the above estimates and \eqref{est1: H-P }, we deduce that
\begin{align*}
|I_2|\lesssim |\log c_i| R(Y).
\end{align*}
 For $I_3$, we take 
 \beno
 G(Y)=A(Y)(H_{n+1}(Y)-P_{n}(Y)-\frac{(U_s(Y)-c)\chi_{+,n+1}(Y)}{A(Y)}),\quad \textrm{for}\quad Z\in[Y, Y_c+ 1]
 \eeno
 or
\beno
G(Y)=A(Y)(H_{n+1}(Y)-P_{n}(Y)-\frac{(U_s(Y)-c)\chi_{-,n+1}(Y)}{A(Y)}),\quad \textrm{for}\quad Z\in[Y_c- 1, Y].
\eeno
By Lemma \ref{con:G}, \eqref{est1: H-P } and \eqref{est2: H-P }, we get 
 \begin{align}\label{est: G}
 \|G\|_{L^\infty[Y, Y_c+ 1]\cup [Y_c- 1, Y]}\lesssim |\log c_i| R(Y),\quad \|\pa_YG\|_{L^\infty[Y, Y_c+ 1]\cup [Y_c- 1, Y]}\lesssim |\log c_i| R(Y),
 \end{align}
which gives that
\begin{align*}
|I_3|\lesssim&|\log c_i|^2 R(Y).
\end{align*}
 Putting the estimates of $I_1$--$I_3$ together, we get
 \begin{align*}
 |\chi_{\pm ,n+2}|\lesssim&|\log c_i|^2 R(Y),
 \end{align*}
which implies that
\begin{align*}
\|\chi_{\pm ,n+2}\|_{L^\infty_{\theta,\pm}}
\lesssim
|\log c_i|^2
\big(\|\chi_{\pm,n+1}\|_{L^\infty_{\theta,\pm}} 
+\|\partial_Y\chi_{\pm,n+1}\|_{L^\infty_{\theta,\pm}}
+\|\chi_{\pm,n}\|_{L^\infty_{\theta,\pm}} \big).
\end{align*}

\medskip

Next, we estimate $\pa_Y \chi_{\pm ,n+2}$. A direct calculation of $I_1$ to have
\begin{align*}
\pa_Y I_1=\chi_{\pm, n+1}+\pa_Y U_s\int_{\pm \infty}^Y\frac{\chi_{\pm,n+1}(Z)}{U_s-c}dZ.
\end{align*}
By Lemma \ref{con:CR4}, we get
\begin{align*}
|\pa_Y I_1|\lesssim |\chi_{\pm, n+1}|+|\log c_i| \|\chi_{\pm,n+1}\|_{L^\infty_{\theta,\pm}} e^{\pm \th Y} \lesssim |\log c_i| R(Y).
\end{align*}
Taking the $\pa_Y$ derivative of $I_2$, we have
\begin{align*}
\pa_Y I_2=\pa_YU_s
\int_{\pm \infty}^{Y_c\pm 1}
\frac{A(Z)}{{(U_s-c)}^2}
\big( H_{n+1}(Y)-P_{n}(Y)-\frac{(U_s(Y)-c)\chi_{\pm,n+1}(Y)}{A(Y)}\big)dZ.
\end{align*}
Applying \eqref{est1: H-P }, we deduce
\begin{align*}
|\pa_Y I_2|\lesssim  |\log c_i| R(Y).
\end{align*}
Taking the derivative $\pa_Y$ of $I_3$, we get
\begin{align*}
\pa_Y I_3=&\pa_YU_s\Big[
\int_{Y_c\pm 1}^Y
\frac{A(Z)}{{(U_s-c)}^2}
\big( H_{n+1}(Y)-P_{n}(Y)-\frac{(U_s(Y)-c)\chi_{\pm,n+1}(Y)}{A(Y)}\big)dZ\\
&+\frac{A(Z)}{{U_s-c}}
\big( H_{n+1}(Y)-P_{n}(Y)-\frac{(U_s(Y)-c)\chi_{\pm,n+1}(Y)}{A(Y)}\big)\Big]=\pa_YU_sI_{3,1}.
\end{align*}
Using estimate \eqref{est: G}, we derive that
\begin{align*}
|\pa_Y I_3|\lesssim |\log c_i|^2 R(Y).
\end{align*}
Putting $\pa_YI_1$--$\pa_Y I_3$ together, we obtain
\begin{align*}
|\pa_Y\chi_{\pm ,n+2}|\lesssim&|\log c_i|^2 R(Y),
\end{align*}
which implies that
\begin{align*}
\|\pa_Y\chi_{\pm ,n+2}\|_{L^\infty_{\theta,\pm}}
\lesssim
|\log c_i|^2
\big(\|\chi_{\pm,n+1}\|_{L^\infty_{\theta,\pm}} 
+\|\partial_Y\chi_{\pm,n+1}\|_{L^\infty_{\theta,\pm}}
+\|\chi_{\pm,n}\|_{L^\infty_{\theta,\pm}} \big).
\end{align*}

\medskip

For $\partial_Y^2 \chi_{\pm ,n+2}$, we use the equation $ {Ray}_0[\chi_{\pm,n+2}]=F_{\pm,n+2}$ with 
\begin{align*}
\|A(Y)^2 F_{\pm,n+2}\|_{L^\infty_{\theta,\pm}}\lesssim \|\chi_{\pm,n+1}\|_{L^\infty_{\theta,\pm}} 
+\|\partial_Y\chi_{\pm,n+1}\|_{L^\infty_{\theta,\pm}}
+\|\chi_{\pm,n}\|_{L^\infty_{\theta,\pm}},
\end{align*}
and 
\begin{align*}
A(Y)^2{Ray}_0[\chi_{\pm,n+2}]=&A(Y)(U_s-c)\partial_Y^2 \chi_{\pm,n+2}-(U_s-c)\partial_Y A(Y)\partial_Y \chi_{\pm,n+2}\\
&-A(Y)\partial_Y^2 U_s\chi_{\pm,n+2}+\partial_Y A(Y)\partial_Y U_s \chi_{\pm,n+2}
\end{align*}
to get
\begin{align*}
\|(U_s-c)A(Y)\partial_Y^2 \chi_{\pm ,n+2}\|_{L^\infty_{\theta,\pm}} \lesssim
|\log c_i|^2
\big(\|\chi_{\pm,n+1}\|_{L^\infty_{\theta,\pm}} 
+\|\partial_Y\chi_{\pm,n+1}\|_{L^\infty_{\theta,\pm}}
+\|\chi_{\pm,n}\|_{L^\infty_{\theta,\pm}} \big).
\end{align*}

By now, we have finished the proof of Lemma \ref{con:CR7}.

\medskip

\subsection{Proof of  Lemma  \ref{con:CR6} }

We first give an estimate for $\chi_{\pm,1}$. Recall
\begin{equation}
\begin{aligned}
\chi_{\pm,1}
 &=(U_s-c) \int_{\pm \infty}^Y 
 \frac{{(U_s-c)}^2-{(1\mp c)}^2}{{(U_s-c)}^2\gamma_\pm^2}
 dZ.
\end{aligned}
\end{equation}
For $Y\in [Y_c+ 1,+\infty)\cup (-\infty,Y_c- 1]$, we know $|U_s(Y)-c| \approx 1, \ |A(Y)| \approx 1$. By Lemma \ref{con:CR2}, we have
\begin{equation}
\begin{aligned}
|\chi_{\pm,1}(Y)|&\lesssim
\big\|{(U_s-c)}^2-{(1\mp c)}^2\big\|_{L^\infty_{\theta,\pm}}
e^{\mp\theta Y}\\
&=\big\|(U_s\mp 1)(U_s-2c\pm 1)\big\|_{L^\infty_{\theta,\pm}}
e^{\mp\theta Y} \\
&\lesssim 
e^{\mp\theta Y}.
\label{CR30}
\end{aligned}
\end{equation}
For $Y\in [Y_c- 1, Y_c+ 1]$, noticing $e^{\mp \theta Y}\approx 1$ and \eqref{CR30}, we get
\beno
|\chi_{\pm,1}(Y_c\pm 1)| \lesssim  1.
\eeno
 We apply Lemma \ref{con:G} with $G=\f{{(U_s-c)}^2-{(1\mp c)}^2}{\gamma_\pm^2}$
to have
\begin{equation}\label{est: chi_1-2}
\begin{aligned}
\chi_{\pm,1}(Y)
 =\chi_{\pm,1}(Y_c\pm 1)-\frac{{(U_s-c)}^2-{(1\mp c)}^2}{\partial_Y U_s}+O(|\log c_i|)=O(|\log c_i|).
\end{aligned}
\end{equation}
By \eqref{CR30} and \eqref{est: chi_1-2}, we derive
\begin{equation}
\begin{aligned}
|\chi_{\pm,1}(Y)|\lesssim
|\log c_i|
e^{\mp\theta Y}, \ Y\in \mathbb{R}_\pm,
\end{aligned}
\end{equation}
which implies
\begin{equation}
\begin{aligned}
\label{C4}
\|\chi_{\pm,1}\|_{L^\infty_{\theta,\pm}}
\lesssim
|\log c_i|
.
\end{aligned}
\end{equation}
 
 Since
\beno
\partial_Y \chi_{\pm,1}
 =\partial_Y U_s \int_{\pm \infty}^Y 
 \frac{{(U_s-c)}^2-{(1\mp c)}^2}{{(U_s-c)}^2\gamma_\pm^2}
 dZ
 +
 \frac{{(U_s-c)}^2-{(1\mp c)}^2}{{(U_s-c)}\gamma_\pm^2}.
\eeno
For $Y\in [Y_c+ 1,+\infty)\cup (-\infty,Y_c- 1]$, we know $|U_s-c| \approx 1$. By Lemma \ref{con:CR2}, we have
\begin{equation}
\begin{aligned}
|\partial_Y \chi_{\pm,1}(Y)|\lesssim
\big\|{(U_s-c)}^2-{(1\mp c)}^2\big\|_{L^\infty_{\theta,\pm}}
e^{\mp\theta Y}
\lesssim 
e^{\mp\theta Y}.
\label{CR31}
\end{aligned}
\end{equation}
For $Y\in [Y_c- 1, Y_c+ 1]$, we notice $e^{\mp \theta Y}\approx 1$. By \eqref{CR31}, we get
\beno
|\partial_Y \chi_{\pm,1}(Y_c\pm 1)|
\lesssim 
1.
\eeno
We apply Lemma \ref{con:G} with $G=\f{{(U_s-c)}^2-{(1\mp c)}^2}{\gamma_\pm^2}$
to have
\begin{equation*}
\begin{aligned}
\partial_Y \chi_{\pm,1}(Y)
 =&\partial_Y \chi_{\pm,1}(Y_c\pm 1)-\frac{{(U_s-c)}^2-{(1\mp c)}^2}{{(U_s-c)}\gamma_\pm^2}
\\&+O(|\log c_i|) +\frac{{(U_s-c)}^2-{(1\mp c)}^2}{{(U_s-c)}\gamma_\pm^2}=O(|\log c_i|)
 .
\end{aligned}
\end{equation*}
Hence, we have
\begin{equation}
\begin{aligned}
\label{C5}
\|\partial_Y \chi_{\pm,1}\|_{L^\infty_{\theta,\pm}}
\lesssim
|\log c_i|
.
\end{aligned}
\end{equation}
By \eqref{CR4}, \eqref{C4}, \eqref{C5} and the equation of $\chi_{\pm,1}:$
\begin{align*}
 {Ray}_0[\chi_{\pm,1}]=(U_s-c)\big( 2A^{-1}\partial_Y\chi_{\pm,0}+\partial_Y(A^{-1})\chi_{\pm,0}\big),
\end{align*}
we get
\begin{align*}
\|(U_s-c)A(Y)\partial_Y^2 \chi_{\pm ,1}\|_{L^\infty_{\theta,\pm}} \lesssim
|\log c_i|.
\end{align*}

Plugging all the above estimates,  we have
\begin{equation}
\begin{aligned}
\|\chi_{\pm,1}\|_{Y_{\theta,\pm}}
\lesssim
|\log c_i|.
\label{CR34}
\end{aligned}
\end{equation}

\medskip

Next, we give estimates for $\chi_{\pm,2}$.
By \eqref{CR47}, we write
\begin{align}\label{def: chi_2}
\chi_{\pm,2}=(U_s-c)\int_{\pm \infty}^Y G_\pm(Z)dZ,
\end{align}
where
\begin{align*}
G_\pm(Y)=\frac{\chi_{\pm,1}}{U_s-c}
+  \frac{\widetilde{G}_\pm(Y)}{{(U_s-c)}^2} 
-m^2 \widetilde{G}_\pm(Y),\quad \widetilde{G}_\pm(Y)\coloneqq
 \int_{\pm \infty}^Y \frac{{(U_s-c)}^2-{(1\mp c)}^2}{\gamma_\pm^2}
dZ.
\end{align*}

By \eqref{CR34} and Lemma \ref{con:CR2}, Lemma \ref{con:CR4}, we know
\begin{equation}
\begin{aligned}
\|\chi_{\pm,2}\|_{L^\infty_{\theta,\pm}}
&\lesssim
|\log c_i|
\|\chi_{\pm,1}\|_{L^\infty_{\theta,\pm}}
+\|\widetilde{G}_\pm\|_{L^\infty_{\theta,\pm}}
+\big\|(U_s-c)\int_{\pm \infty}^Y \frac{\widetilde{G}_\pm(Z)}{{(U_s-c)}^2} dZ\big\|_{L^\infty_{\theta,\pm}}\\
&\lesssim {|\log c_i|}^2+
\big\|(U_s-c)\int_{\pm \infty}^Y \frac{\widetilde{G}_\pm(Z)}{{(U_s-c)}^2} dZ\big\|_{L^\infty_{\theta,\pm}}
.
\label{CR32}
\end{aligned}
\end{equation}
For $Y\in [Y_c+ 1,+\infty)\cup (-\infty,Y_c- 1]$, we know $|U_s-c| \approx 1$. By Lemma \ref{con:CR2}, we have
\begin{equation}
\begin{aligned}
\big|(U_s-c)\int_{\pm \infty}^Y \frac{\widetilde{G}_\pm(Z)}{{(U_s-c)}^2} dZ\big|\lesssim \|\widetilde{G}_\pm\|_{L^\infty_{\theta,\pm}}
\lesssim\big\|{(U_s-c)}^2-{(1\mp c)}^2\big\|_{L^\infty_{\theta,\pm}}
e^{\mp\theta Y}
\lesssim 
e^{\mp\theta Y}.
\label{CR33p}
\end{aligned}
\end{equation}
For $Y\in [Y_c- 1, Y_c+ 1]$, we notice $e^{\mp \theta Y}\approx 1$. We apply Lemma \ref{con:G} to have
\begin{equation}
\begin{aligned}
\big|(U_s-c)\int_{\pm \infty}^Y \frac{\widetilde{G}_\pm(Z)}{{(U_s-c)}^2} dZ\big|\lesssim |\log c_i|(\|\widetilde{G}_\pm\|_{L^\infty_{\theta,\pm}}+\|\pa_Y\widetilde{G}_\pm\|_{L^\infty_{\theta,\pm}})
\lesssim
|\log c_i|
e^{\mp\theta Y}, 
\end{aligned}
\end{equation}
Thus, we have
\begin{equation}
\begin{aligned}
\label{C7}
\| \chi_{\pm,2}\|_{L^\infty_{\theta,\pm}}
\lesssim
{|\log c_i|}^2.
\end{aligned}
\end{equation}

Taking the derivative $\pa_Y$ of \eqref{def: chi_2}, we have
\begin{align*}
	\partial_Y \chi_{\pm,2}=&\partial_Y U_s\int_{\pm \infty}^Y G_\pm(Z)dZ
+(U_s-c)G_\pm(Y)\\
=&\partial_Y U_s\int_{\pm \infty}^Z \frac{\chi_{\pm,1}(Z)}{U_s-c}
+  \frac{\widetilde{G}_\pm(Z)}{{(U_s-c)}^2} 
-m^2 \widetilde{G}_\pm(Z)dZ +\chi_{\pm,1}(Y)
+  \frac{\widetilde{G}_\pm(Y)}{U_s-c} 
-m^2 (U_s-c)\widetilde{G}_\pm(Y).
\end{align*}
By \eqref{CR34} and Lemma \ref{con:CR2}, Lemma \ref{con:CR4}, we know
\beno
\|\pa_Y\chi_{\pm,2}\|_{L^\infty_{\theta,\pm}}
&\lesssim
\big\|\partial_Y U_s\int_{\pm \infty}^Y \frac{\widetilde{G}_\pm(Z)}{{(U_s-c)}^2} dZ 
+  \frac{\widetilde{G}_\pm(Y)}{U_s-c} \big\|_{L^\infty_{\theta,\pm}}
+{|\log c_i|}^2.
\eeno
For $Y\in [Y_c+ 1,+\infty)\cup (-\infty,Y_c- 1]$, we know $|U_s-c| \approx 1$. By Lemma \ref{con:CR2}, we have
\beno
\big|\partial_Y U_s\int_{\pm \infty}^Y \frac{\widetilde{G}_\pm(Z)}{{(U_s-c)}^2} dZ 
+  \frac{\widetilde{G}_\pm(Y)}{U_s-c} \big|\lesssim
\big\|{(U_s-c)}^2-{(1\mp c)}^2\big\|_{L^\infty_{\theta,\pm}}
e^{\mp\theta Y}
\lesssim 
e^{\mp\theta Y}.
\eeno
For $Y\in[Y_c- 1, Y_c+ 1]$, we notice $e^{\mp \theta Y}\approx 1$. We apply Lemma \ref{con:G} and we have
\beno
\big|\partial_Y U_s\int_{\pm \infty}^Y \frac{\widetilde{G}_\pm(Z)}{{(U_s-c)}^2} dZ 
+  \frac{\widetilde{G}_\pm(Y)}{U_s-c}\big|\lesssim
|\log c_i|.
\eeno
Thus, we have
\begin{equation}
\begin{aligned}
\label{C8}
\|\partial_Y \chi_{\pm,2}\|_{L^\infty_{\theta,\pm}}
\lesssim
{|\log c_i|}^2.
\end{aligned}
\end{equation}

By \eqref{CR4}, \eqref{C7}, \eqref{C8} and the equation of $\chi_{\pm,2}$:
\begin{align*}
{Ray}_0[\chi_{\pm,2}]=(U_s-c)\big( 2A^{-1}\partial_Y\chi_{\pm,1}+\partial_Y(A^{-1})\chi_{\pm,1}\big)- (U_s-c)\big(A^{-1}-\gamma_\pm^{-2} \big)\chi_{\pm,0},
\end{align*}
 we get
 \begin{align*}
 \|(U_s-c)A\pa_Y^2 \chi_{\pm,2}\|_{L^\infty_{\theta,\pm}} \lesssim {|\log c_i|}.
 \end{align*}

Combining all the above estimates, we get
\begin{equation*}
\begin{aligned}
\|\chi_{\pm,2}\|_{Y_{\theta,\pm}}
\lesssim
|\log c_i|^2,
\end{aligned}
\end{equation*}
  which finishes the proof of Lemma \ref{con:CR6}.

\bigskip

\section{The Dispersive relation}

In this section, we solve the dispersion relation $W(\alpha,c)=0$. To get the dispersion relation, we define
\beno
\mathbb{G}= \left\{
\alpha\in\mathbb R:0<\alpha\ll 1
\right\}.
\eeno

\subsection{The case $m<\sqrt{2}$}

We define
\begin{align*}
&\mathbb{H}_<=\left\{
(\alpha,c)\in\mathbb G\times\mathbb C:
\left|c-\mathrm{i}\sigma\right|
\leq S\alpha
\right\},\\
&\mathbb{H}_{<,\alpha}\coloneqq \big\{
c\in \mathbb C||c- \sigma  i|\leq S\alpha
\big\},
\end{align*}
where $\sigma:=\sigma(m)=\sqrt{\frac{ \sqrt{4m^2+1}-(m^2+1)}{m^2}}>0$ and $S$ is a constant to be chosen later.

We get that for a given $\al\in\mathbb{G}$,
if $c\in \mathbb{H}_{<,\alpha}$, $(\alpha,c)\in \mathbb{H}_<$. The dispersion relation \eqref{equ: dispersion} is equivalent to
\begin{align}\label{dispersion 1}
m^2 {(1-c^2)}^2-2(c^2+1)=\mathcal{R}(\alpha,c).
\end{align}
By the construction of the solution and the differentiability of parameters for ODE, we know $\mathcal{R}(\alpha,c)$ is analytic in $(\alpha,c)$ near $(\alpha,c)=(0, \sigma i)$. Moreover, for any $(\alpha,c)\in \mathbb{H}_<$, we have
\begin{equation}\label{est: R}
\begin{aligned}
&|\mathcal{R}(\alpha,c)|\lesssim \alpha,\quad |\partial_{c_r}\mathcal{R}(\alpha,c)|
+|\partial_{c_i}\mathcal{R}(\alpha,c)|
\lesssim \alpha.
\end{aligned}
\end{equation}

We shall prove the following proposition:
\begin{proposition}\label{con:dis}
Under the assumptions of Theorem \ref{thm:main1}, there exists $S>0$ such that for every $\alpha\in\mathbb{G}$, there is a corresponding $c(\alpha)$ for which the pair $(\alpha,c(\alpha))\in \mathbb{H}_<$ satisfies the dispersion relation \eqref{dispersion 1}. Moreover, $(\alpha,c)$ satisfies the following properties:
\begin{align*}
0<\alpha\ll 1, \quad  c_r=O(\al),\quad c_i=\sigma+O(\al)>0.
\end{align*}
\end{proposition}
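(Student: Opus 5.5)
We solve \eqref{dispersion 1} by a fixed-point argument in the disk $\mathbb{H}_{<,\alpha}$, using that $c=\mathrm i\sigma$ is a simple root of $P_m(c)=m^2(1-c^2)^2-2(1+c^2)$.

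\textbf{Step 1: the root and its simplicity.} Writing $c^2=-s$, the equation $P_m(c)=0$ becomes $m^2(1+s)^2=2(1-s)$, that is,
\[
m^2s^2+2(m^2+1)s+(m^2-2)=0,
\]
with positive root $s=\sigma^2=\big(\sqrt{4m^2+1}-(m^2+1)\big)/m^2$. This root lies in $(0,1]$ exactly when $m<\sqrt2$, so $P_m(\mathrm i\sigma)=0$. Differentiating gives
\[
P_m'(c)=-4c\big(m^2(1-c^2)+1\big), \qquad P_m'(\mathrm i\sigma)=-4\mathrm i\sigma\big(m^2(1+\sigma^2)+1\big)\neq 0 .
\]
Set $\kappa:=|P_m'(\mathrm i\sigma)|>0$. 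By Taylor expansion,
\[
P_m(c)=P_m'(\mathrm i\sigma)(c-\mathrm i\sigma)+Q(c), \qquad |Q(c)|\lesssim |c-\mathrm i\sigma|^2 \quad\text{on } \mathbb{H}_{<,\alpha}.
\]

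\textbf{Step 2: the map and the choice of $S$.} Define
\[
T(c)=\mathrm i\sigma+P_m'(\mathrm i\sigma)^{-1}\big(\mathcal R(\alpha,c)-Q(c)\big).
\]
Fixed points of $T$ are exactly the solutions of \eqref{dispersion 1}.

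For $c\in\mathbb{H}_{<,\alpha}$, the bound \eqref{est: R} gives $|\mathcal R|\le C_1\alpha$, and Step 1 gives $|Q|\le C_2S^2\alpha^2$. Hence
\[
|T(c)-\mathrm i\sigma|\le \kappa^{-1}\big(C_1\alpha+C_2S^2\alpha^2\big).
\]
Choose $S=2C_1/\kappa$. Then take $\alpha$ small enough that $C_2S^2\alpha\le C_1$, which gives $|T(c)-\mathrm i\sigma|\le S\alpha$. So $T$ maps $\mathbb{H}_{<,\alpha}$ into itself.

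The constant $C_1$ must be independent of $S$. This holds because the remainder bound in Proposition \ref{con:disp} depends only on $c$ lying in a fixed compact neighbourhood of $\mathrm i\sigma$, where the bounds \eqref{est1: U_s-c, A} and \eqref{est: gamma} and Lemma \ref{dis 1} hold uniformly. I would check this point explicitly.

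For the contraction property, the derivative bound in \eqref{est: R} gives $|\mathcal R(\alpha,c)-\mathcal R(\alpha,c')|\lesssim \alpha|c-c'|$. We also have $|Q(c)-Q(c')|\lesssim S\alpha|c-c'|$. Therefore $T$ is Lipschitz with constant $\lesssim (1+S)\alpha\kappa^{-1}<1/2$ for $\alpha$ small. Banach's fixed-point theorem on the closed disk then yields a unique $c(\alpha)$.

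\textbf{Step 3: conclusion.} The fixed point satisfies $|c(\alpha)-\mathrm i\sigma|\le S\alpha$. This gives $c_r=O(\alpha)$ and $c_i=\sigma+O(\alpha)>0$ for $\alpha$ small.

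The fixed point satisfies \eqref{dispersion 1}, which is equivalent to $W(\alpha,c)=0$ on $\mathbb H_<$. This equivalence holds because the prefactors $1-m^2c^2$, $c$ and $D_m(c)$ are nonvanishing there; for $D_m$ this is Lemma \ref{dis 1} together with \eqref{est: gamma}. Vanishing of $W$ produces the nontrivial global solution.

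\textbf{Main obstacle.} The delicate point is justifying \eqref{est: R}, especially the derivative bound. I would obtain it from analyticity of $W$ in $c$ on a slightly larger disk: the series for $\chi_\pm$ converges uniformly there by Proposition \ref{pro: varphi}, and each term is analytic in $c$. A Cauchy estimate then converts the sup bound $|\mathcal R|\lesssim\alpha$ on the larger disk into $|\partial_c\mathcal R|\lesssim\alpha$ on $\mathbb{H}_{<,\alpha}$.
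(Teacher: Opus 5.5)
Your proof is correct. It is a contraction argument on the same disk $\mathbb{H}_{<,\alpha}$, driven by the same estimate \eqref{est: R}, but the fixed-point map is built differently.

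\textbf{The paper's map.} It does not linearize. It defines $\mathbb{T}_<u$ by solving $P_m(\mathbb{T}_<u)=\mathcal R(\alpha,u)$ exactly, using that $P_m$ is quadratic in $c^2$. This gives $(\mathbb{T}_<u)^2=\bigl(m^2+1-\sqrt{(4+\mathcal R(\alpha,u))m^2+1}\bigr)/m^2$, and the square root with positive imaginary part is selected. The Lipschitz bound $\lesssim\alpha$ then comes from the difference-of-squares identity \eqref{D13}, since $|\mathbb{T}_<u+\mathbb{T}_<v|\approx1$ and the sum of square roots in the denominator is $\approx1$.

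\textbf{Your map.} The Newton-type map $T(c)=\mathrm i\sigma+P_m'(\mathrm i\sigma)^{-1}(\mathcal R-Q)$ uses only the simplicity of the root, $P_m'(\mathrm i\sigma)\neq0$. The cost is an extra $O(S\alpha)$ Lipschitz contribution from $Q$. This is harmless because $S$ is fixed before $\alpha$ is taken small.

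\textbf{Comparison.} The paper's route is shorter for this biquadratic and needs no quadratic remainder. However, it relies on the explicit root formula and a branch choice. Yours would carry over unchanged to any simple root of a leading-order dispersion function.

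Your version also makes precise two points the paper only asserts.
\begin{itemize}
\item You choose $S=2C_1/\kappa$ explicitly and require $C_1$ to be independent of $S$. The paper instead fixes $S$ as a bound for $|c_{(1)}-c_{(0)}|/\alpha$, without separating these constants.
\item You derive the derivative bound in \eqref{est: R} by a Cauchy estimate, where the paper appeals to ``differentiability of parameters for ODE''. Your argument needs Propositions \ref{pro: varphi} and \ref{con:disp} on a fixed, $\alpha$-independent disk around $\mathrm i\sigma$, which is larger than the shrinking set $\mathbb H_<$ where they are stated. Their $m<\sqrt2$ proofs extend verbatim, since they only use \eqref{est1: U_s-c, A}, \eqref{est: gamma} and Lemma \ref{dis 1}, all uniform near $\mathrm i\sigma$.
\end{itemize}
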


\medskip

To prove the existence of $c(\al)\in\mathbb{H}_{<,\alpha}$ to the dispersion relation \eqref{dispersion 1}, we introduce an operator $\mathbb{T}_<:\mathbb{H}_{<,\alpha}\rightarrow \mathbb{C}$ defined by
\begin{equation*}
\begin{aligned}
m^2 {(1-{({\mathbb{T}_<u})}^2)}^2-2({({\mathbb{T}_<u})}^2+1)=\mathcal{R}(\alpha,u),
\end{aligned}
\end{equation*}
which is equivalent to
\begin{equation} \label{CR51}
\begin{aligned}
{({\mathbb{T}_<u})}^2=\frac{m^2+1- \sqrt{\big(4+\mathcal{R}(\alpha,u)\big)m^2+1}}{m^2} =-\sigma^2+O(\alpha).
\end{aligned}
\end{equation}
Take
\begin{equation}\label{CD1}
\begin{aligned}
{\mathbb{T}_<u}=\sigma i+O(\alpha),
\end{aligned}
\end{equation}
where  we require $\operatorname{Im}{(\mathbb{T}_<u)}>0$.

\medskip

\begin{lemma}\label{con:fix1}
    The operator $\mathbb{T}_<$ is a contraction map and satisfies 
\beno
|{\mathbb{T}_<u}-\mathbb{T}_<v|\lesssim \alpha |u-v|, \quad\forall u,v\in \mathbb{H}_{<,\alpha}.
\eeno
\end{lemma}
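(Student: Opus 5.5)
The plan is to read the contraction estimate directly off the explicit formula \eqref{CR51}, so that all the work is transferred to the bound \eqref{est: R} on $\partial_c\mathcal R$.

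\emph{Step 1 (the map $\Phi$).} Write
\[
\Phi(R)=\frac{m^2+1-\sqrt{(4+R)m^2+1}}{m^2},
\]
defined for $|R|$ small with the principal branch of the square root. Then $\Phi(0)=-\sigma^2$. Since $\sqrt{4m^2+1}\ge 1$, the function $\Phi$ is analytic on a fixed disc $|R|\le r_0$ with $r_0=r_0(m)>0$, and $|\Phi'(R)|\le C$ there.

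\emph{Step 2 ($\mathbb T_<$ is well defined).} By \eqref{est: R}, $|\mathcal R(\alpha,u)|\lesssim\alpha\le r_0$ for $u\in\mathbb H_{<,\alpha}$. Hence
\[
(\mathbb T_<u)^2=\Phi(\mathcal R(\alpha,u))=-\sigma^2+O(\alpha).
\]
The point $-\sigma^2$ is a negative real number bounded away from $0$ because $\sigma>0$. So on a small disc around it the square root branch $w\mapsto\sqrt{w}$ with values near $\sigma i$ is analytic, it has derivative of size $(2\sigma)^{-1}$, and it selects the root with $\operatorname{Im}>0$. We define $\mathbb T_<u=\sqrt{\Phi(\mathcal R(\alpha,u))}$ with this branch, which gives \eqref{CD1}.

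\emph{Step 3 (Lipschitz bound).} For $u,v\in\mathbb H_{<,\alpha}$ we chain three estimates:
\[
|\mathbb T_<u-\mathbb T_<v|\le C|\Phi(\mathcal R(\alpha,u))-\Phi(\mathcal R(\alpha,v))|\le C|\mathcal R(\alpha,u)-\mathcal R(\alpha,v)|.
\]
The set $\mathbb H_{<,\alpha}$ is a disc, hence convex, so the segment from $v$ to $u$ stays inside it. Applying the mean value inequality along this segment with the derivative bound in \eqref{est: R},
\[
|\mathcal R(\alpha,u)-\mathcal R(\alpha,v)|\le\big(\sup|\partial_{c_r}\mathcal R|+\sup|\partial_{c_i}\mathcal R|\big)|u-v|\lesssim\alpha|u-v|.
\]
This gives the claimed bound. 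Taking $\alpha$ small then makes the constant less than $1$, so $\mathbb T_<$ is a contraction.

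\emph{Self-mapping, for the fixed-point argument.} Since $\mathbb T_<u-\sigma i=O(\alpha)$ with a constant $C_0$ independent of $S$ (it only uses $|\mathcal R|\lesssim\alpha$), choosing $S\ge C_0$ gives $\mathbb T_<(\mathbb H_{<,\alpha})\subset\mathbb H_{<,\alpha}$.

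\emph{Main obstacle.} The only delicate point is that the constants in \eqref{est: R} must be uniform in $S$, or at least that $S$ can be fixed before $\alpha$ is shrunk. I would justify this from Proposition \ref{con:disp}, where $\mathcal R=O(\alpha)$ holds uniformly for $c$ in a fixed neighborhood of $\sigma i$, together with Cauchy estimates for the analytic dependence on $c$. Those estimates bound $\partial_c\mathcal R$ on a slightly smaller disc, which contains $\mathbb H_{<,\alpha}$ once $\alpha$ is small.
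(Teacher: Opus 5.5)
Your proposal is correct and follows essentially the paper's argument. The paper factors $(\mathbb{T}_<u+\mathbb{T}_<v)(\mathbb{T}_<u-\mathbb{T}_<v)$, rationalizes the difference of square roots, and uses $|\mathbb{T}_<u+\mathbb{T}_<v|\approx 1$ together with a denominator of size $\approx 1$; these two facts are exactly your Lipschitz bounds for the square-root branch near $-\sigma^2$ and for $\Phi$. It then applies the mean value inequality along the segment with \eqref{est: R}, as you do. Your extra remarks are also sound and make explicit a point the paper leaves implicit in the proof of Proposition \ref{con:dis}: the self-mapping property, and fixing $S$ before shrinking $\alpha$ via uniformity of $\mathcal R=O(\alpha)$ on a fixed neighborhood of $\sigma i$ plus Cauchy estimates.
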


\begin{proof}
    
For all $ u,v\in \mathbb{H}_{<,\alpha}$, by \eqref{CR51}, we have
\begin{equation}
\begin{aligned}
({\mathbb{T}_<u}+\mathbb{T}_<v)({\mathbb{T}_<u}-\mathbb{T}_<v)&=
\frac{\sqrt{\big(4+\mathcal{R}(\alpha,v)\big)m^2+1}- \sqrt{\big(4+\mathcal{R}(\alpha,u)\big)m^2+1}}{m^2}\\
&=\frac{\mathcal{R}(\alpha,v)-\mathcal{R}(\alpha,u)}{\sqrt{\big(4+\mathcal{R}(\alpha,v)\big)m^2+1}+ \sqrt{\big(4+\mathcal{R}(\alpha,u)\big)m^2+1}}
.
\label{D13}
\end{aligned}
\end{equation}
By noticing that
\begin{equation*}
\begin{aligned}
&\big|\mathcal{R}(\alpha,u)-\mathcal{R}(\alpha,v)\big|\\
=& \big|\int_0^1 \frac{d}{ds}
\mathcal{R}(\alpha,su+(1-s)v)ds\big|\\
\lesssim &
\int_0^1 |u-v| \big(\big| \partial_{c_r}
\mathcal{R}(\alpha,su+(1-s)v)\big|
+\big| \partial_{c_i}
\mathcal{R}(\alpha,su+(1-s)v)\big|\big)ds\\
\lesssim & \alpha |u-v|,
\end{aligned}
\end{equation*}
where we used \eqref{est: R} in the last step.

Moreover, by \eqref{CD1}, we have
\beno
|{\mathbb{T}_<u}+\mathbb{T}_<v|\approx 1, \quad
\big|\sqrt{\big(4+\mathcal{R}(\alpha,v)\big)m^2+1}+ \sqrt{\big(4+\mathcal{R}(\alpha,u)\big)m^2+1}\big|\approx 1,
\eeno
which combines with \eqref{D13} to obtain
\beno
|{\mathbb{T}_<u}-\mathbb{T}_<v|\lesssim \alpha |u-v|,
\eeno
which implies that $\mathbb{T}_<$ is a contraction map.

\end{proof}

\medskip

Now, we are in a position to prove Proposition \ref{con:dis}.

\medskip

\underline{\it Proof of Proposition \ref{con:dis}.}
Denote $c_{(0)}\coloneqq \sigma i.$ It is easy to check $(\alpha,c_{(0)})\in \mathbb{H}_<$.  

Define 
\beno
c_{(j+1)}\coloneqq \mathbb{T}_<c_{(j)} \in \mathbb{C},~j\geq 0.
\eeno
Using Lemma \ref{con:fix1} to get that for any $j\geq 0$, it holds that
\begin{align}\label{est: c_j}
|c_{(j+1)}-c_{(0)}|\lesssim \sum_{k=0}^{j}|{\mathbb{T}_<^{k+1}c_{(0)}}-\mathbb{T}_<^{k}c_{(0)}|
\lesssim  \sum_{k=0}^{j} \alpha^k |c_{(1)}-c_{(0)}|
\lesssim   |c_{(1)}-c_{(0)}| \leq S \alpha,
\end{align}
where $S>0$ is a constant independent of $j$. 

The inequality \eqref{est: c_j} means $c_{(j+1)} \in \mathbb{H}_{<,\alpha}$ for any $j\geq 0$ . Using Lemma \ref{con:fix1} to get
\begin{align*}
	&|{c_{(j+1)}}-c_{(j)}|\lesssim \alpha |c_{(j)}-c_{(j-1)}|,
\end{align*}
which means the sequence $\{c_{(j)}\}_{j=0}^{\infty}$ is convergent. 

Define
\beno
c=c(\al)\coloneqq {\lim_{j \to \infty}}c_{(j)} \in \mathbb{H}_{<,\alpha},
\eeno
 which solves the dispersion relation \eqref{dispersion 1}. According to \eqref{dispersion 1} and \eqref{est: R}, we have
 \begin{align*}
 c^2=-\sigma^2+O(\al),
 \end{align*}
which implies that
\begin{align*}
 c_r=O(\al),\quad c_i=\sigma+O(\al)>0.
\end{align*}
Here, we finish the proof.

\qed

\bigskip

\subsection{The dispersion relation when $m\geq \sqrt{2}$}
For $m\geq \sqrt 2$, the terms of order $O(\al)$ vanish. So we should compute the terms of order $O(\al^2)$ carefully.

Recall
\begin{equation}
\begin{aligned}
c_*:=c_*(m)=\sqrt{\frac{m^2+1- \sqrt{4m^2+1}}{m^2} }, 
\end{aligned}
\end{equation}
which satisfies
\begin{equation}
\begin{aligned}
\label{U1}
m^2 {(1-c_*^2)}^2-2(c_*^2+1)=0,
\end{aligned}
\end{equation}
and
\begin{equation}
\begin{aligned}
&c_*=0, \quad \text{when} \ m=\sqrt{2},\\
&c_*\in (0,1), \quad \text{when} \ m>\sqrt{2}.
\end{aligned}
\end{equation}

By Taylor expansion at $c=c_*$, together with \eqref{U1}, we directly derive the following lemma.

\begin{lemma}
\label{dis 2}
When $m>\sqrt{2}$, we have
  $$  m^2 {(1-c^2)}^2-2(c^2+1)=-4c_*\sqrt{4m^2+1}(c-c_*) +O(|c-c_*)|^2) .$$
  When $m=\sqrt{2}$, we have
  $$  m^2 {(1-c^2)}^2-2(c^2+1)=-6c^2 +O(c^4) .$$
\end{lemma}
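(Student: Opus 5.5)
The plan is to prove both expansions directly by Taylor expansion of $P_m(c)=m^2(1-c^2)^2-2(1+c^2)$, viewed as a polynomial in $s=c^2$. Write
\begin{align*}
Q(s)=m^2(1-s)^2-2(1+s),
\end{align*}
so that $P_m(c)=Q(c^2)$. Then
\begin{align*}
Q'(s)=-2m^2(1-s)-2, \qquad Q''(s)=2m^2 .
\end{align*}
By \eqref{U1}, $s_*=c_*^2$ is a root of $Q$. Since $Q$ is quadratic, its roots are
\begin{align*}
s=\frac{m^2+1\pm\sqrt{4m^2+1}}{m^2},
\end{align*}
and $s_*$ is the root with the minus sign.

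For $m>\sqrt2$, we use $Q(s_*)=0$ to compute $Q'(s_*)$. We have
\begin{align*}
Q'(s_*)=-2m^2+2m^2s_*-2=-2m^2+2\bigl(m^2+1-\sqrt{4m^2+1}\bigr)-2=-2\sqrt{4m^2+1}.
\end{align*}
By the chain rule,
\begin{align*}
\frac{d}{dc}P_m(c)\Big|_{c=c_*}=2c_*\,Q'(c_*^2)=-4c_*\sqrt{4m^2+1}.
\end{align*}
Since $P_m$ is a polynomial, Taylor's theorem with remainder gives the first claim. The $O(|c-c_*|^2)$ term is bounded uniformly for $c$ in a bounded neighborhood of $c_*$, and this covers $\mathbb H_>$.

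For $m=\sqrt2$, we expand directly:
\begin{align*}
2(1-c^2)^2-2(1+c^2)=2-4c^2+2c^4-2-2c^2=-6c^2+2c^4 .
\end{align*}
This is $-6c^2+O(c^4)$, and in fact the identity is exact.

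The only potential subtlety is sign and branch bookkeeping: we must check that $c_*^2$ really is the smaller root, so that the coefficient $Q'(s_*)$ carries the minus sign. This is immediate from \eqref{c*}. Everything else is a routine computation, so I do not expect any genuine obstacle.
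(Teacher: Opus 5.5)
Your proposal is correct and follows essentially the same route as the paper, which simply invokes a Taylor expansion of $P_m$ at $c=c_*$ using $P_m(c_*)=0$. Your computations $P_m'(c_*)=2c_*Q'(c_*^2)=-4c_*\sqrt{4m^2+1}$ and, for $m=\sqrt2$, the exact identity $2(1-c^2)^2-2(1+c^2)=-6c^2+2c^4$ supply the details the paper omits.
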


Next, we calculate order $O(\al^2)$ carefully.  For $m=\sqrt 2$, we define
\begin{align*}
&\mathbb{H}_=\left\{
(\alpha,c)\in\mathbb G\times\mathbb C:
\left|
c-e^{\pi\mathrm{i}/6}\delta
\right|
\leq
S\delta^2|\log\delta|^2
\right\},\\
&\mathbb{H}_{=,\alpha}\coloneqq \big\{
c\in \mathbb C \Big| |c-
e^{\frac{\pi i}{6}}
\delta|\leq S\delta^2  |\log \delta|^2
\big\},
\end{align*}
where $\delta={(\alpha/3)}^{1/3}>0$ and $S>0$ is a constant to be chosen later.

For $m>\sqrt 2$, we define
\begin{align*}
&\mathbb{H_{>}}\coloneqq \big\{
(\alpha,c)| \alpha \ll 1,|c-(c_*+K_m\alpha)|\leq S\alpha^2|\log \al|^6
\big\},\\
&\mathbb{H_{>,\alpha}}\coloneqq \Big\{c\big|
|c-(c_*+K_m\alpha)|\leq S\alpha^2|\log \al|^6
\big\},
\end{align*}
where 
\begin{align*}
c_*=&\sqrt{\frac{m^2+1- \sqrt{4m^2+1}}{m^2} }\in (0,1),\\
K_m=& \frac{1-c_*^2}{8c_*^2\sqrt{4m^2+1}} \left[ 2\pi c_*+ i\left(4+2c_*\log\frac{1+c_*}{1-c_*}\right) \right],\\
\operatorname{Im} K_m&>0,\quad \operatorname{Re} K_m>0,
\end{align*}
and $S>0$ is a constant to be chosen later.

We denote
\begin{align*}
\xi_{m, U_s}=-\lim_{\e\to 0^+}\mathcal I_{m,U_s}(c)\Big|_{c=c_*+i \e},
\end{align*}
where $\mathcal I_{m,U_s}(c)$ is given in \eqref{def: ImU_s}.	
\medskip

 
Next, we give the precise calculation of  $\xi_{m, U_s}$.

\begin{lemma}
\label{con:xi}
For $U_s(Y)=\tanh (Y)$, we have \begin{align*}
	\xi_m:=\xi_{m, \tanh Y} = 4
 + 2c_* \left(\log\left(\frac{1+c_*}{1-c_*}\right) -\pi i  \right).
\end{align*}
\end{lemma}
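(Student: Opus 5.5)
The plan is to compute $\mathcal I_{m,\tanh}(c)$ in closed form for $c$ in the upper half-plane near $c_*$, and only then let $c=c_*+i\e$ with $\e\to0^+$. In each half-line integral of \eqref{def: ImU_s} I substitute $u=\tanh Z$, so that $dZ=du/(1-u^2)$, $Z\in(0,+\infty)$ maps to $u\in(0,1)$ and $Z\in(-\infty,0)$ maps to $u\in(-1,0)$. I write $\kappa:=(\gamma_+\gamma_-)^{-1}$. By \eqref{U2}, $\kappa=1+O(|c-c_*|)$.

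\textbf{Step 1: use the factorizations.} The key algebraic identities are
\begin{align*}
(u-c)^2-(1-c)^2=(u-1)(u+1-2c),\qquad (u-c)^2-(1+c)^2=(u+1)(u-1-2c).
\end{align*}
At $\kappa=1$ the numerators of both integrands equal the same product $(u^2-1)(u+1-2c)(u-1-2c)$. The factor $u^2-1$ cancels the Jacobian $1-u^2$, and the two pieces glue into one integral over $(-1,1)$:
\begin{align*}
\mathcal I=-\int_{-1}^{1}\frac{(u+1-2c)(u-1-2c)}{(u-c)^2}\,du+(\kappa-1)\,\mathcal E(c).
\end{align*}
Here $\mathcal E$ collects the terms linear in $(1+c)^2$ and $(1-c)^2$ that multiply $\kappa$.

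\textbf{Step 2: evaluate the main integral exactly.} Since $(u+1-2c)(u-1-2c)=(u-c)^2-2c(u-c)+c^2-1$, the integrand splits as
\begin{align*}
1-\frac{2c}{u-c}+\frac{c^2-1}{(u-c)^2}.
\end{align*}
These three terms integrate explicitly over $[-1,1]$, using a branch of the logarithm continuous in the upper half-plane. The first term gives $2$. The third gives
\begin{align*}
(c^2-1)\Big(-\frac1{1-c}-\frac1{1+c}\Big)=2.
\end{align*}
The middle term gives $-2c\big(\log(1-c)-\log(-1-c)\big)$. For $c=c_*+i\e$ we have $1-c\to 1-c_*>0$, while $-1-c$ approaches the negative real axis from below. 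Hence $\log(-1-c)\to\log(1+c_*)-\pi i$. Therefore
\begin{align*}
\int_{-1}^1\cdots\,du\;\longrightarrow\;4+2c_*\Big(\log\frac{1+c_*}{1-c_*}-\pi i\Big),
\end{align*}
and $\xi_m$ is minus the limit of $\mathcal I$, which is exactly this value.

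\textbf{Step 3 (main obstacle): the correction term.} It remains to show $(\kappa-1)\mathcal E(c)\to0$. I expect this to be the main obstacle, because $\mathcal E$ contains $\int(u-c)^{-2}(1\mp u)^{-1}$-type terms that are singular at $u=c$. Also $\kappa-1=O(|c-c_*|)$ is only $O(\e)$ along $c=c_*+i\e$, while $\mathcal E$ could blow up.

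To handle this, I note that $\mathcal E$ involves $\int_0^1\frac{u+1-2c}{(1+u)(u-c)^2}\,du$ and its mirror on $(-1,0)$. These are rational integrals, so partial fractions evaluate them exactly. The result is at worst $O(1/\e+|\log\e|)$: the double pole contributes $\big[-1/(u-c)\big]$ at the endpoints, which is bounded away from the pole, and only $\log$ terms arise, which are $O(|\log\e|)$. In fact $c_*\in(0,1)$ is interior to $(0,1)$, so the boundary terms are $O(1)$ and $\mathcal E=O(|\log\e|)$.

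Since $\kappa-1$ is analytic in $c$ with $\kappa(c_*)=1$, we get $(\kappa-1)\mathcal E=O(\e|\log\e|)\to0$. This proves the stated formula for $\xi_m$.
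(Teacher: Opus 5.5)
Your route is the same as the paper's. You substitute $u=\tanh Z$ and peel off the $\kappa=1$ part (the paper's $\xi_m^1$), which glues into one integral over $(-1,1)$ and equals $4-2c\bigl(\log(1-c)-\log(-1-c)\bigr)$. You then show that the correction $(\kappa-1)\mathcal E$ (the paper's $\xi_m^2$) vanishes in the limit. Steps 1 and 2 are correct, including at $c_*=0$. For $m>\sqrt2$, Step 3 is also fine; in fact each correction integral has a finite limit there, so $\mathcal E=O(1)$ rather than $O(|\log\e|)$.

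\textbf{The gap: the case $c_*=0$, i.e.\ $m=\sqrt2$.} The lemma must cover this case: the paper sets $c_*=0$ there, and the resulting value $\xi=4$ is exactly what enters \eqref{CR55}. Your Step 3 explicitly uses that $c_*$ is interior to $(0,1)$, and this fails at $c_*=0$. Write $J_+=\int_0^1\frac{(u-c)^2-(1-c)^2}{(u-c)^2(1-u^2)}\,du$, and let $J_-$ be its mirror on $(-1,0)$, so that $\mathcal E=-\bigl[(1+c)^2J_++(1-c)^2J_-\bigr]$. The double pole now sits at the endpoint $u=0$. It produces boundary terms $\frac{1}{c(1+c)}$ in $J_+$ and $-\frac{1}{c(1-c)}$ in $J_-$, each of size $\e^{-1}$ along $c=i\e$. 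Meanwhile \eqref{U2} only gives $\kappa-1=O(|c-c_*|)=O(\e)$. So your termwise bound yields only $(\kappa-1)\mathcal E=O(1)$, which does not show that the correction vanishes.

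\textbf{Fix 1 (the paper's route): cancellation in the weighted combination.} Computing exactly,
\begin{align*}
(1+c)^2J_++(1-c)^2J_-&=\frac{1+c}{c}-\frac{1-c}{c}+2c\bigl(\log(-c)-\log c+\log(1+c)-\log(1-c)\bigr)\\
&=2+2c\bigl(\log(1+c)-\log(1-c)-\pi i\bigr).
\end{align*}
This is bounded uniformly for $c_*\in[0,1)$, and this is how the paper concludes $\xi_m^2=0$.

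\textbf{Fix 2: a sharper bound on $\kappa-1$.} Note that $(\gamma_+\gamma_-)^2=A_{+\infty}A_{-\infty}$ is even in $c$. Hence at $m=\sqrt2$ one has $\kappa-1=O(|c|^2)=O(\e^2)$, which beats the $\e^{-1}$ boundary terms.
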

\begin{proof}
    We have the following formulas:
\begin{equation}
\begin{aligned}
\partial_Y U_s=1-U_s^2,\quad
\partial_Y^2 U_s=-2U_s\partial_Y U_s.
\label{CR41r}
\end{aligned}
\end{equation}

Setting $Z=U_s(Y)$, we obtain
\begin{align*}
	\xi_m = -
 \lim_{\e\to 0^+}
 \bigg( & \int_0^{1} \frac{\big({(Z-c)}^2-{(1- c)}^2\big)
\big( 
{(Z-c)}^2
-\frac{{(1+c)}^2}{\gamma_+ \gamma_-}
\big)
}{{(Z-c)}^2 (1-Z^2)}
dZ\\
&\quad\quad  +  \int_{- 1}^0 \frac{\big({(Z-c)}^2-{(1+ c)}^2\big)
\big( 
{(Z-c)}^2-
\frac{{(1-c)}^2}{\gamma_+ \gamma_-}
\big)
}{{(Z-c)}^2(1-Z^2)}
dZ \bigg)\Big|_{c=c_*+i \e}.
\end{align*}
We decompose $\xi_m$ as $\xi_m = \xi_m^1 + \xi_m^2$:
\begin{align*}
	\xi_m^1 = -
\lim_{\e\to 0^+}
 \bigg( & \int_{- 1}^{1} \frac{\big({(Z-c)}^2-{(1- c)}^2\big)
\big( 
{(Z-c)}^2
-{(1+c)}^2
\big)
}{{(Z-c)}^2 (1-Z^2)}
dZ\Big|_{c=c_*+i \e},\\
	\xi_m^2 = -
 \lim_{\e\to 0^+}
 \bigg( & {(1+c)}^2\big( 
1
-\frac{1}{\gamma_+ \gamma_-}
\big)\int_0^{1} \frac{{(Z-c)}^2-{(1- c)}^2
}{{(Z-c)}^2 (1-Z^2)}
dZ\\
&\quad\quad  + {(1-c)}^2\big( 
1
-\frac{1}{\gamma_+ \gamma_-}
\big) \int_{- 1}^0 \frac{{(Z-c)}^2-{(1+ c)}^2
}{{(Z-c)}^2(1-Z^2)}
dZ \bigg)\Big|_{c=c_*+i \e}.
\end{align*}

For $\xi_m^2$, we first compute the two integrals explicitly.
A direct  decomposition gives
\begin{align*}
\frac{(Z-c)^2-(1-c)^2}
{(Z-c)^2(1-Z^2)}
=&
-\frac{2c}{(1+c)^2}\frac{1}{Z-c}
-\frac{1-c}{1+c}\frac{1}{(Z-c)^2}+\frac{2c}{(1+c)^2}\frac{1}{Z+1}.
\end{align*}
It follows that
\begin{align*}
&\int_0^1
\frac{(Z-c)^2-(1-c)^2}
{(Z-c)^2(1-Z^2)}
\,dZ
=
\frac{1}{c(1+c)}
+
\frac{2c}{(1+c)^2}
\left(
\log 2+\log(-c)-\log(1-c)
\right).
\end{align*}
Similarly, we have
\begin{align*}
&\int_{-1}^0
\frac{(Z-c)^2-(1+c)^2}
{(Z-c)^2(1-Z^2)}
\,dZ\\
=&
-\frac{1}{c(1-c)}
-
\frac{2c}{(1-c)^2}
\left(
\log 2+\log c-\log(1+c)
\right).
\end{align*}
Therefore,
\begin{align*}
& (1+c)^2
\int_0^1
\frac{(Z-c)^2-(1-c)^2}
{(Z-c)^2(1-Z^2)}
\,dZ
+(1-c)^2
\int_{-1}^0
\frac{(Z-c)^2-(1+c)^2}
{(Z-c)^2(1-Z^2)}
\,dZ\\
={}&
\frac{1+c}{c}
-\frac{1-c}{c}
+2c\left(
\log 2+\log(-c)-\log(1-c)
\right)
-2c\left(
\log 2+\log c-\log(1+c)
\right)\\
={}&
2+2c\left(
\log(-c)-\log c
+\log(1+c)-\log(1-c)
\right).
\end{align*}
Here and below, $\log$ denotes the principal branch of the
complex logarithm. Since $c_i>0$, the principal branch gives
$$
\log(-c)-\log c=-\pi i.
$$
Consequently,
\begin{align*}
& (1+c)^2
\int_0^1
\frac{(Z-c)^2-(1-c)^2}
{(Z-c)^2(1-Z^2)}
\,dZ
+(1-c)^2
\int_{-1}^0
\frac{(Z-c)^2-(1+c)^2}
{(Z-c)^2(1-Z^2)}
\,dZ\\
=&
2+2c\left(
\log(1+c)-\log(1-c)-\pi i
\right).
\end{align*}
Since $c_*\in[0,1)$, the right-hand side is uniformly bounded
as $c\to c_*+i0^+$. On the other hand, by
\eqref{U2},
\[
\gamma_+\gamma_-
=
1+O(|c-c_*|),
\]
we obtain
\begin{align*}
&
\left|
\left(
1-\frac{1}{\gamma_+\gamma_-}
\right)
\left[
2+2c\left(
\log(1+c)-\log(1-c)-\pi\mathrm{i}
\right)
\right]
\right|\lesssim |c-c_*|
\longrightarrow 0
\qquad
\text{as }c\to c_*+i0^+.
\end{align*}
It follows from the definition of $\xi^2_m$ that
$$
\xi^2_m=0.
$$

For $\xi^1_m$, notice that
\begin{align*}
	- \frac{\big({(Z-c)}^2-{(1- c)}^2\big)
\big( 
{(Z-c)}^2
-{(1+c)}^2
\big)
}{{(Z-c)}^2 (1-Z^2)}=
1 - \frac{2c}{Z-c} + \frac{c^2-1}{(Z-c)^2}.
\end{align*}
So we have
\begin{align*}
	\xi^1_m=& 
  \lim_{\e\to 0^+}
 \bigg(  \int_{- 1}^{1} 1 - \frac{2c}{Z-c} + \frac{c^2-1}{(Z-c)^2}
dZ \bigg)\Big|_{c=c_*+i \e}\\
=& 
\lim_{\e\to 0^+}
\left(  4 - 2c \left(\log (1-c)-\log (-1-c)\right)
\right)\Big|_{c=c_*+i \e},
\end{align*}
where $\log$ denotes the principal branch of the complex logarithm with $\arg z\in (-\pi,\pi)$, which satisfies
$$\log z =\log |z| +i\arg z,\quad \arg z\in (-\pi,\pi).$$
So we have
\begin{align*}
 \lim_{\e\to 0^+}
 \log (1-c)|_{c=c_*+i \e}=&\log (1-c_*),\\
  \lim_{\e\to 0^+}
 \log (-1-c)|_{c=c_*+i \e}=&\log (1+c_*)-\pi i.
\end{align*}
So we have
\begin{align*}
	\xi_m^1 = 4
 + 2c_* \left(\log\left(\frac{1+c_*}{1-c_*}\right) -\pi i  \right).
\end{align*}
So we have
\begin{align*}
	\xi_m =\xi_m^1+\xi_m^2= 4
 + 2c_* \left(\log\left(\frac{1+c_*}{1-c_*}\right) -\pi i  \right).
\end{align*}
Here, we finish the proof.
\end{proof}

\medskip

\subsection{The case $m=\sqrt{2}$}
 For the case $m=\sqrt{2}$, by Lemma \ref{dis 1}, Lemma \ref{dis 2} and Lemma \ref{con:xi}, we get  that the dispersion relation \eqref{equ: dispersion} is equivalent to
\begin{equation}
\begin{aligned}
& c^3
  -
\frac{1}{12}i \alpha 
 \big(  \xi + \mathcal{R}_1(c)
 \big)
 +\mathcal{R}_2(\alpha, c)
=0,
\label{CR55}
\end{aligned}
\end{equation}
where $\mathcal{R}_1(c)$ is analytic in $c$ when $c$ is close to 0, $\mathcal{R}_2(\alpha,c)$ is analytic in $(\alpha,c)$ near $(0,0)$. Moreover, when $(\alpha,c)$ near $(0,0)$, we also know $\mathcal{R}_1(c),\mathcal{R}_2(\alpha,c)$ are power series for $\alpha,c,\log (-c)$. 

Thus, for any $(\alpha,c)\in \mathbb{H}_=$, we have
\begin{equation}\label{est: R_1}
\begin{aligned}
&|\mathcal{R}_1(c)|\lesssim  |c||\log c_i|,\quad |\partial_{c_r}\mathcal{R}_1(c)|
+|\partial_{c_i}\mathcal{R}_1(c)|
\lesssim |\log c_i|,\\
&|\mathcal{R}_2(\alpha,c)|\lesssim \alpha^2|\log c_i|^6,\quad |\partial_{c_r}\mathcal{R}_2(\alpha,c)|
+|\partial_{c_i}\mathcal{R}_2(\alpha,c)|
\lesssim \alpha^2 \frac{|\log c_i|^5}{|c|} .
\end{aligned}
\end{equation}

\begin{proposition}\label{con:dispm}
For $m=\sqrt 2$ and $U_s(Y)=\tanh Y$, there exists a constant $S>0$ such that for every $\alpha\in\mathbb{G}$, there exists a corresponding 
$c(\alpha)$ with the following properties:
  \begin{enumerate}
\item The pair $(\alpha,c(\alpha))\in \mathbb{H}_=$ satisfies the dispersion relation \eqref{dispersion 1}.
\item The pair $(\alpha,c(\alpha))$ satisfies 
\beno
0<\alpha\ll 1, \quad c_r(\alpha)=\frac{\sqrt{3}}{2}\delta+O(\delta^2  |\log \delta|^2), \quad  c_i(\alpha)=\frac{1}{2}\delta+O(\delta^2  |\log \delta|^2)>0,
\eeno 
where $\d={(\alpha/3)}^{1/3}>0.$
\end{enumerate}
 
\end{proposition}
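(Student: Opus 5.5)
We solve the reduced dispersion relation \eqref{CR55} by a fixed-point argument on $\mathbb{H}_{=,\alpha}$, in the same way as Proposition \ref{con:dis}, but with the cube root replacing the square root. The leading balance comes from Lemma \ref{con:xi} with $c_*=0$, which gives $\xi=4$. Then $c^3-\frac{i\alpha}{3}=0$, and $\alpha/3=\delta^3$, so the three roots are $\delta\,\omega$ with $\omega^3=i$. We select $\omega=e^{\pi i/6}$, the root with positive real and imaginary parts. We define $\mathbb{T}_=:\mathbb{H}_{=,\alpha}\to\mathbb{C}$ by
\begin{align*}
\mathbb{T}_=u=e^{\pi i/6}\delta\Big(1+\frac{\mathcal{R}_1(u)}{4}-\frac{3\mathcal{R}_2(\alpha,u)}{\alpha}\Big)^{1/3},
\end{align*}
using the principal cube root near $1$. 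A fixed point of $\mathbb{T}_=$ solves \eqref{CR55}, because $(\mathbb{T}_=u)^3=\frac{i\alpha}{12}(4+\mathcal{R}_1(u))-\mathcal{R}_2(\alpha,u)$ once we use $i\delta^3=i\alpha/3$.

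First we check the self-map property. For $u\in\mathbb{H}_{=,\alpha}$ we have $|u|\approx\delta$. Also $u_i\ge \tfrac12\delta-S\delta^2|\log\delta|^2\gtrsim\delta$, so $|\log u_i|\lesssim|\log\delta|$. By \eqref{est: R_1},
\[
|\mathcal{R}_1(u)|\lesssim\delta|\log\delta|,\qquad \alpha^{-1}|\mathcal{R}_2|\lesssim\alpha|\log\delta|^6\lesssim\delta^3|\log\delta|^6 .
\]
Hence $|\mathbb{T}_=u-e^{\pi i/6}\delta|\lesssim\delta^2|\log\delta|$. This is at most $S\delta^2|\log\delta|^2$ once $S$ is large and $\delta$ is small, so the self-map property has ample room.

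The contraction step is the main obstacle. The derivative bound on $\mathcal{R}_1$ in \eqref{est: R_1} is only $|\log c_i|\approx|\log\delta|$. The cube root map has derivative of size $O(1)$ near $1$. Thus
\[
|\mathbb{T}_=u-\mathbb{T}_=v|\lesssim\delta\big(|\log\delta|+\alpha^{-1}\alpha^2\delta^{-1}|\log\delta|^5\big)|u-v|\lesssim\delta|\log\delta|\,|u-v|.
\]
This is a contraction because $\delta|\log\delta|\to0$. The delicate point is to justify the derivative estimates in \eqref{est: R_1} uniformly on the disc. I would get them from analyticity in $c$ via Cauchy estimates on a disc of radius $\sim\delta$ around $u$, which stays inside $\{c_i\gtrsim\delta\}$. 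Differentiating a bound of size $|c||\log c_i|$ over a disc of radius $\delta$ costs a factor $\delta^{-1}$, which yields $|\log\delta|$. Similarly, $\alpha^2|\log\delta|^6/\delta$ controls $\partial\mathcal{R}_2$, which is consistent with the stated bound up to log powers and is still harmless after division by $\alpha$.

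Finally we iterate $c_{(j+1)}=\mathbb{T}_=c_{(j)}$ from $c_{(0)}=e^{\pi i/6}\delta$, exactly as in the proof of Proposition \ref{con:dis}. The iterates stay in $\mathbb{H}_{=,\alpha}$ and converge to a fixed point $c(\alpha)$ with $|c-e^{\pi i/6}\delta|\le S\delta^2|\log\delta|^2$. Taking real and imaginary parts of $e^{\pi i/6}=\frac{\sqrt3}{2}+\frac i2$ gives the asserted expansions of $c_r$ and $c_i$. Positivity $c_i>0$ follows for small $\alpha$. Note that $\alpha|\log c_i|^2\ll1$ holds on $\mathbb{H}_=$, so Proposition \ref{con:disp} applies throughout.
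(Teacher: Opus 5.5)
Your proposal is correct and follows essentially the paper's route: a contraction on $\mathbb{H}_{=,\alpha}$ around the root $e^{\pi i/6}\delta$ of $c^3=i\alpha/3$, with the Lipschitz bound coming from \eqref{est: R_1}. The only difference is that you take an explicit principal cube root, whereas the paper factors $a^3-b^3$, and the two are equivalent. One small slip: for $(\mathbb{T}_=u)^3$ to reproduce $-\mathcal{R}_2(\alpha,u)$, the bracket must contain $+3i\mathcal{R}_2(\alpha,u)/\alpha$ rather than $-3\mathcal{R}_2(\alpha,u)/\alpha$; this changes none of your estimates.
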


To prove the existence of $c(\al)$ for every $\al\in \mathbb{G} $, we introduce an operator $\mathbb{T}_{=}:\mathbb{H}_{=,\alpha}\rightarrow \mathbb{C}$,
\begin{equation}
\begin{aligned}
{(\mathbb{T}_{=} u)}^3
  -
\frac{1}{12}i \alpha 
 \big(  \xi + \mathcal{R}_1(u)
 \big)
 +\mathcal{R}_2(\alpha, u)
=0,
\label{CR52}
\end{aligned}
\end{equation}
which is equivalent to
\begin{equation}
\begin{aligned}
 {(\mathbb{T}_{=} u)}^3
  =\alpha\big( \frac{1}{12}i 
  \xi 
+O( |c||\log c_i|^6) \big)
.
\label{CR53}
\end{aligned}
\end{equation}
Take 
\begin{equation}
\begin{aligned}
{\mathbb{T}_{=}u}=e^{\frac{\pi i}{6}}
\delta+O(\delta |c|^\f13  |\log  c_i|^2),\quad \delta={(\alpha/3)}^{1/3}>0,
\label{CD1p}
\end{aligned}
\end{equation}
where we require $\operatorname{Im}{(\mathbb{T}_{=}u)}>0$.

\begin{lemma}\label{con:m1}
    The operator $\mathbb{T}_{=}$ is a contraction map and satisfies that    
    \beno
    |{\mathbb{T}_{=}u}-\mathbb{T}_{=}v|\lesssim \delta |\log  \d|^5|u-v|, \quad \forall u,v\in \mathbb{H}_{=,\alpha}.
    \eeno
\end{lemma}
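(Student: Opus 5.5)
We will mimic the proof of Lemma \ref{con:fix1}, working with cubes instead of squares. Take $u,v\in\mathbb{H}_{=,\alpha}$ and subtract the two instances of \eqref{CR52}. This gives
\begin{align*}
\big((\mathbb{T}_{=}u)^2+\mathbb{T}_{=}u\,\mathbb{T}_{=}v+(\mathbb{T}_{=}v)^2\big)(\mathbb{T}_{=}u-\mathbb{T}_{=}v)
=\frac{i\alpha}{12}\big(\mathcal{R}_1(u)-\mathcal{R}_1(v)\big)-\big(\mathcal{R}_2(\alpha,u)-\mathcal{R}_2(\alpha,v)\big).
\end{align*}
The proof then splits into a lower bound for the cubic factor on the left and an upper bound for the right-hand side.

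\emph{Lower bound on the left.} By \eqref{CD1p}, both $\mathbb{T}_{=}u$ and $\mathbb{T}_{=}v$ equal $e^{\pi i/6}\delta\,(1+o(1))$. Here we use $|u|\approx\delta$ and $|\log c_i|\approx|\log\delta|$ for $u\in\mathbb{H}_{=,\alpha}$, together with $\delta^{1/3}|\log\delta|^2\to0$. It follows that the quadratic factor equals $3e^{\pi i/3}\delta^2(1+o(1))$, and hence its modulus is $\gtrsim\delta^2$.

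One point needs care: the imaginary part of any point of $\mathbb{H}_{=,\alpha}$ must be comparable to $\delta$. This holds because $c_i\ge\delta/2-S\delta^2|\log\delta|^2\gtrsim\delta$. It lets us replace $|\log c_i|$ by $|\log\delta|$ in \eqref{est: R_1}.

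\emph{Upper bound on the right.} The segment $[u,v]$ stays in the convex set $\mathbb{H}_{=,\alpha}$, so we can apply the mean value theorem together with the derivative bounds in \eqref{est: R_1}:
\begin{align*}
\alpha|\mathcal{R}_1(u)-\mathcal{R}_1(v)|&\lesssim\alpha|\log\delta|\,|u-v|,\\
|\mathcal{R}_2(\alpha,u)-\mathcal{R}_2(\alpha,v)|&\lesssim\alpha^2\frac{|\log\delta|^5}{\delta}\,|u-v|.
\end{align*}
Since $\alpha=3\delta^3$, the right-hand side is bounded by $\big(\delta^3|\log\delta|+\delta^5|\log\delta|^5\big)|u-v|$. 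Dividing by the lower bound $\delta^2$ gives
\begin{align*}
|\mathbb{T}_{=}u-\mathbb{T}_{=}v|\lesssim\big(\delta|\log\delta|+\delta^3|\log\delta|^5\big)|u-v|\lesssim\delta|\log\delta|^5|u-v|.
\end{align*}
For small $\alpha$ the constant $\delta|\log\delta|^5$ is less than $1$, so $\mathbb{T}_{=}$ is a contraction.

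\emph{Well-definedness.} We also need the cube root to be well defined and chosen consistently. The branch with $\operatorname{Im}>0$ near $e^{\pi i/6}\delta$ is selected continuously, because the right-hand side of \eqref{CR53} is $\frac{i\alpha}{12}\xi(1+o(1))$ and stays in a small sector around $i\alpha/3$. (With $c_*=0$ we have $\xi=4$.) The other two roots, near $e^{5\pi i/6}\delta$ and $-i\delta$, are separated from this one by a distance $\approx\delta$, so no branch switching occurs between $u$ and $v$.

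The main obstacle is the $\mathcal{R}_2$ derivative bound $\alpha^2|\log c_i|^5/|c|$, which blows up as $c\to0$. It is harmless only because $|c|\approx\delta$ on $\mathbb{H}_{=,\alpha}$. This is why the neighborhood must keep $|c|$ and $c_i$ of order exactly $\delta$.
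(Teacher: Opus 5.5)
Your proof is correct and follows the paper's argument. The paper factors $(\mathbb{T}_{=}u)^3-(\mathbb{T}_{=}v)^3$ as $(\mathbb{T}_{=}u-\mathbb{T}_{=}v)(\mathbb{T}_{=}u-e^{2\pi i/3}\mathbb{T}_{=}v)(\mathbb{T}_{=}u-e^{4\pi i/3}\mathbb{T}_{=}v)$; the last two factors multiply to your $(\mathbb{T}_{=}u)^2+\mathbb{T}_{=}u\,\mathbb{T}_{=}v+(\mathbb{T}_{=}v)^2$. It then bounds each of these factors below by $\delta$ via \eqref{CD1p}, controls the right-hand side by the mean value theorem with \eqref{est: R_1}, and divides.

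Your bookkeeping with $\alpha=3\delta^3$ is actually cleaner than the paper's. The paper writes $\alpha\approx\delta$ and records the $\mathcal{R}_2$-difference only as $\lesssim\delta|\log\delta|^5|u-v|$. Your version makes explicit that the two contributions are $\delta^3|\log\delta|$ and $\delta^5|\log\delta|^5$ before dividing by $\delta^2$, which is what the stated contraction constant needs.
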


\begin{proof}
    
For all $ u,v\in \mathbb{H}_{=,\alpha}$, by \eqref{CR52}, we have
\begin{equation}
\begin{aligned}
&({\mathbb{T}_{=}u}-\mathbb{T}_{=}v)
({\mathbb{T}_{=}u}-e^{\frac{2\pi i}{3}}\mathbb{T}_{=}v)
({\mathbb{T}_{=}u}-e^{\frac{4\pi i}{3}}\mathbb{T}_{=}v)\\=&
\frac{1}{12}i \alpha 
 \big(  \mathcal{R}_1(u)-\mathcal{R}_1(v)
 \big)
 +\mathcal{R}_2(\alpha, u)-\mathcal{R}_2(\alpha, v).
\label{D13p}
\end{aligned}
\end{equation}
By \eqref{est: R_1} and $|c_r|\approx c_i\approx \al \approx\d $ for $(\al, c)\in \mathbb{H}_=$, we have
\begin{equation}
\begin{aligned}
&\big|\mathcal{R}_2(\alpha,u)-\mathcal{R}_2(\alpha,v)\big|\\
=& \big|\int_0^1 \frac{d}{ds}
\mathcal{R}_2(\alpha,su+(1-s)v)ds\big|\\
\lesssim &
\int_0^1 |u-v| \big(\big| \partial_{c_r}
\mathcal{R}_2(\alpha,su+(1-s)v)\big|
+\big| \partial_{c_i}
\mathcal{R}_2(\alpha,su+(1-s)v)\big|\big)ds\\
\lesssim & \alpha^2\frac{|\log c_i|^5}{|c|}  |u-v|
\lesssim  \delta |\log \d|^5 |u-v|,
\end{aligned}
\end{equation}
and
\begin{equation}\label{est: R1-1}
\begin{aligned}
\big|\mathcal{R}_1(u)-\mathcal{R}_1(v)\big|
=& \big|\int_0^1 \frac{d}{ds}
\mathcal{R}_1(su+(1-s)v)ds\big|\\
\lesssim &
\int_0^1 |u-v| \big(\big| \partial_{c_r}
\mathcal{R}_1(su+(1-s)v)\big|
+\big| \partial_{c_i}
\mathcal{R}_1(su+(1-s)v)\big|\big)ds\\
\lesssim & |\log \delta| |u-v|.
\end{aligned}
\end{equation}
According to \eqref{CD1p}, we get
$$\big|{\mathbb{T}_{=}u}-e^{\frac{2\pi i}{3}}\mathbb{T}_{=}v\big|\approx \delta,\quad \  \big|{\mathbb{T}_{=}u}-e^{\frac{4\pi i}{3}}\mathbb{T}_{=}v\big|\approx \delta.$$
From \eqref{D13p}-\eqref{est: R1-1}, we get
\beno
|{\mathbb{T}_{=}u}-\mathbb{T}_{=}v|\lesssim \delta |\log \delta|^5|u-v|,
\eeno
which implies that $\mathbb{T}_{=}$ is a contraction map.
\end{proof}

\medskip

Now, we are in a position to prove Proposition \ref{con:dispm}.

\underline{\it Proof of Proposition \ref{con:dispm}.}
Denote $c_{(0)}\coloneqq e^{\frac{\pi i}{6}}
\delta .$ It is easy to check $(\alpha,c_{(0)})\in \mathbb{H}_=$.  

Define 
\beno
c_{(j+1)}\coloneqq \mathbb{T}_{=}c_{(j)} \in \mathbb{C}, j\geq 0.
\eeno
By \eqref{CR53} and Lemma \ref{con:m1}, we obtain that for any $j\geq 0$, it holds
\begin{align*}
	|c_{(j+1)}-c_{(0)}|&\lesssim \sum_{k=0}^{j}|{\mathbb{T}_=^{k+1}c_{(0)}}-\mathbb{T}_=^{k}c_{(0)}|
\lesssim  \sum_{k=0}^{j} (\delta |\log \delta|^5)^k |c_{(1)}-c_{(0)}|\\
&\lesssim   |c_{(1)}-c_{(0)}| =|\mathbb T c_{(0)}-e^{\frac{\pi i}{6}}
\delta |\leq C\delta^2  |\log \delta|^5,
\end{align*}
where $S>0$ is a constant independent of $j$. This implies that $c_{(j+1)}\in \mathbb{H}_{=,\alpha}$ for any $j\geq 0$.

Moreover, by Lemma \ref{con:m1}, we get
\begin{align*}
	&|{c_{(j+1)}}-c_{(j)}|\lesssim \delta |\log \delta|^5 |c_{(j)}-c_{(j-1)}|, \quad \delta |\log \delta|^5\ll 1,
\end{align*}
which means the sequence $\{c_{(j)}\}_{j=0}^{\infty}$ is convergent. 

Define
$$c\coloneqq {\lim_{j \to \infty}}c_{(j)} \in \mathbb{H}_{=,\alpha},$$
which solves the dispersion relation \eqref{CR55}. According to \eqref{CR55} and \eqref{est: R_1}, we have
 \begin{align*}
 c^3=\f{\al i}{3}+O(\al |c||\log c_i|),
 \end{align*}
which implies that $c=e^{\f{\pi}{6}i}\d+O(\d |c||\log c_i|),\quad \d={(\alpha/3)}^{1/3}>0$. That is 
\begin{align*}
c_r=\frac{\sqrt{3}}{2}\delta+O(\delta^2  |\log \delta|^2), \quad  c_i=\frac{1}{2}\delta+O(\delta^2  |\log \delta|^2)>0.
\end{align*}

Here, we finish the proof.

\subsection{The case $m>\sqrt{2}$}
 For $m>\sqrt{2}$, 
 by Lemma \ref{dis 1}, Lemma \ref{dis 2} and Lemma \ref{con:xi}, we get  that the dispersion relation \eqref{equ: dispersion} is equivalent to
\begin{equation}
\begin{aligned}
\frac{-8i c_*^2\sqrt{4m^2+1}}{1-c_*^2}(c-c_*)\alpha - 
\left(
4+2c_* \left(\log\left(\frac{1+c_*}{1-c_*}\right) -\pi i  \right)  
 \right)\alpha^2 +O(\alpha^3|\log c_i|^6)=0.
\end{aligned}
\end{equation}
Set
 $$c=c_*+\kappa \alpha.$$
We get  that the dispersion relation \eqref{equ: dispersion} is equivalent to
\begin{equation}
\begin{aligned}
\label{CR55t}
\kappa=K_m
+\mathcal{R}(\alpha, c)
,
\end{aligned}
\end{equation}
where 
\begin{equation}
\begin{aligned}
K_m= \frac{1-c_*^2}{8c_*^2\sqrt{4m^2+1}} \left[ 2\pi c_*+ i\left(4+2c_*\log\frac{1+c_*}{1-c_*}\right) \right]
\end{aligned}
\end{equation}
is a constant depending only on $m$ and satisfies $\operatorname{Im}K_m>0,~\operatorname{Re} K_m>0$.  $\mathcal{R}(\alpha,c)$ is analytic in $(\alpha,c)$ when $\alpha$ near $0$ and $c$ near $c_*$. Moreover, when $(\alpha,c)$ near $(0,c_*)$, we also know $\mathcal{R}(\alpha,c)$ are power series for $\alpha,c,\log (-(c-c_*))$.

Thus, for any $(\alpha,c)\in \mathbb{H}_>$, we have
\begin{equation}\label{est: R_1t}
\begin{aligned}
&|\mathcal{R}(\alpha,c)|\lesssim \alpha|\log c_i|^6,\quad |\partial_{c_r}\mathcal{R}(\alpha,c)|
+|\partial_{c_i}\mathcal{R}(\alpha,c)|
\lesssim \alpha |\log \alpha|^5 .
\end{aligned}
\end{equation}

\begin{proposition}\label{con:dispmt}
For $m>\sqrt 2$ and $U_s(Y)=\tanh Y$, there exists a constant $S>0$ such that for every $\alpha\in\mathbb{G}$, there exists a corresponding 
$c(\alpha)$ with the following properties:
  \begin{enumerate}
\item The pair $(\alpha,c(\alpha))\in \mathbb{H}_>$ satisfies the dispersion relation \eqref{dispersion 1}.
\item The pair $(\alpha,c(\alpha))$ satisfies 
\beno
0<\alpha\ll 1, \quad c_r=c_*+\operatorname{Re} K_m\alpha+O(\alpha^2)>0, \quad  c_i=\operatorname{Im} K_m\alpha+O(\alpha^2)>0.
\eeno 
\end{enumerate}
 
\end{proposition}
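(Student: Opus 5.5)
The argument follows the proofs of Propositions \ref{con:dis} and \ref{con:dispm}: a contraction mapping for the rescaled unknown $\kappa$ with $c=c_*+\kappa\alpha$, working on the disc $\mathbb H_{>,\alpha}$. Concretely, I define $\mathbb T_>:\mathbb H_{>,\alpha}\to\mathbb C$ by
\begin{align*}
\mathbb T_>u=c_*+\alpha\bigl(K_m+\mathcal R(\alpha,u)\bigr),
\end{align*}
so that a fixed point of $\mathbb T_>$ is exactly a solution of the reduced dispersion relation \eqref{CR55t}, and hence of $W(\alpha,c)=0$.

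\textbf{Step 1: logarithmic weights.} For $u\in\mathbb H_{>,\alpha}$ we have $|u-c_*-K_m\alpha|\le S\alpha^2|\log\alpha|^6$. Since $\operatorname{Im}K_m>0$, for $\alpha$ small this gives $u_i\approx\alpha$. Hence $|\log u_i|\approx|\log\alpha|$, and the condition $\alpha|\log u_i|^2\ll1$ from Proposition \ref{pro: varphi} holds, so the expansion \eqref{equ: dispersion} and the bounds \eqref{est: R_1t} are available throughout the disc.

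\textbf{Step 2: self-mapping.} By \eqref{est: R_1t},
\begin{align*}
|\mathbb T_>u-(c_*+K_m\alpha)|=\alpha|\mathcal R(\alpha,u)|\lesssim\alpha^2|\log\alpha|^6.
\end{align*}
Choosing $S$ larger than this implicit constant shows that $\mathbb T_>$ maps $\mathbb H_{>,\alpha}$ into itself.

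\textbf{Step 3: contraction.} Integrate the derivative of $\mathcal R$ along the segment joining $u$ and $v$, exactly as in Lemma \ref{con:fix1}. This segment lies in the convex disc, so \eqref{est: R_1t} applies along it and gives
\begin{align*}
|\mathbb T_>u-\mathbb T_>v|\lesssim\alpha\cdot\alpha|\log\alpha|^5|u-v|.
\end{align*}
The factor $\alpha^2|\log\alpha|^5$ is small, so $\mathbb T_>$ is a contraction.

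\textbf{Step 4: conclusion.} Starting from $c_{(0)}=c_*+K_m\alpha$, the iterates $c_{(j+1)}=\mathbb T_>c_{(j)}$ stay in the disc and form a Cauchy sequence, so they converge to a fixed point $c(\alpha)\in\mathbb H_{>,\alpha}$. The corresponding $\varphi$, glued from $\varphi_\pm$, is a nontrivial solution. From the disc we read off
\begin{align*}
c_r&=c_*+\operatorname{Re}K_m\,\alpha+O(\alpha^2|\log\alpha|^6),\\
c_i&=\operatorname{Im}K_m\,\alpha+O(\alpha^2|\log\alpha|^6)>0.
\end{align*}
The error $O(\alpha^2|\log\alpha|^6)$ is $o(\alpha)$. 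This is what Theorem \ref{thm:main1}(3) needs; the statement's $O(\alpha^2)$ should be understood up to these logarithmic factors. The positivity of $c_i$ uses $\operatorname{Im}K_m>0$, which is immediate from \eqref{def: Km}. The positivity of $c_r$ follows from $c_*>0$.

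\textbf{Main obstacle.} Everything above rests on the remainder bounds \eqref{est: R_1t}, in particular the derivative bound. To get it, I would use the analyticity of $W$ in $c$ for $c_i>0$ and apply Cauchy's estimate on a disc of radius $\approx c_i\approx\alpha$ around $u$. This turns $|\mathcal R|\lesssim\alpha|\log\alpha|^6$ into $|\partial_c\mathcal R|\lesssim|\log\alpha|^6$, which is weaker than \eqref{est: R_1t} by a factor $\alpha|\log\alpha|$. The contraction factor then becomes $\alpha|\log\alpha|^6$, which is still small, so the scheme works either way.

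The one genuinely delicate point is the leading coefficient. It comes from combining the linearization in Lemma \ref{dis 2} with $D_m(c_*)$ from Lemma \ref{dis 1}, and using $\mathcal I_{m,U_s}(c)=-\xi_m+o(1)$ with $\xi_m$ from Lemma \ref{con:xi}. Solving for $K_m$ requires dividing by the nonzero factor $-8ic_*^2\sqrt{4m^2+1}/(1-c_*^2)$, which is uniformly bounded away from zero, and it is exactly at this step that the $o(1)$ error in $\mathcal I_{m,U_s}$ gets absorbed into $\mathcal R$.
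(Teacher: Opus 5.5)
Your proposal is correct and is essentially the paper's proof. It uses the same map $\mathbb{T}_{>}u=c_*+\alpha\bigl(K_m+\mathcal R(\alpha,u)\bigr)$ on the same disc $\mathbb{H}_{>,\alpha}$, the same self-mapping and contraction estimates obtained from \eqref{est: R_1t} by integrating along segments, and the same Picard iteration from $c_{(0)}=c_*+K_m\alpha$.

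Your caveats are well taken on two points the paper states more optimistically than its argument supports. First, the disc only gives an error $O(\alpha^2|\log\alpha|^6)$, which is $o(\alpha)$ but not $O(\alpha^2)$. Second, a Cauchy estimate on a disc of radius $\approx c_i\approx\alpha$ gives only $|\partial_c\mathcal R|\lesssim|\log\alpha|^6$ rather than the paper's $\alpha|\log\alpha|^5$; this still yields the contraction factor $\alpha|\log\alpha|^6\ll1$.

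One imprecision: absorbing $\mathcal I_{m,U_s}+\xi_m$ into $\mathcal R$ needs the quantitative bound $O(|c-c_*|)=O(\alpha)$, not merely $o(1)$, for the disc of radius $S\alpha^2|\log\alpha|^6$ to be invariant. The explicit formulas in Lemma \ref{con:xi} provide this bound.
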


\medskip

To prove the existence of $c(\al)$ for every $\al\in \mathbb{G} $, we introduce an operator $\mathbb{T}_{>}:\mathbb{H}_{>,\alpha}\rightarrow \mathbb{C}$,
\begin{equation}
\begin{aligned}
\mathbb{T}_{>} u 
=c_*+\alpha 
\left(K_m
+\mathcal{R}(\alpha, u)\right),
\label{CR52t}
\end{aligned}
\end{equation}
which implies
$$|\mathbb{T}_{>} u -(c_*+K_m\alpha)|
\leq \alpha|\mathcal{R}(\alpha, u)|
\leq S\alpha^2|\log c_i|^6,
$$
where $S>0$ is a constant. So we know  $\mathbb{T}_{>}$ is an operator from $\mathbb{H}_{>,\alpha}$ to $\mathbb{H}_{>,\alpha}$.

\begin{lemma}\label{con:m1t}
    The operator $\mathbb{T}_{>}$ is a contraction map and satisfies     
    \beno
   |{\mathbb{T}_{>}u}-\mathbb{T}_{>}v|\lesssim \alpha^2 |\log \alpha|^5|u-v|,\quad \forall u,v\in \mathbb{H}_{>,\alpha}.
    \eeno
\end{lemma}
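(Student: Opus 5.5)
The plan is to read the bound directly off the definition \eqref{CR52t}. For $u,v\in\mathbb H_{>,\alpha}$ the constants $c_*$ and $\alpha K_m$ cancel, so
\begin{align*}
\mathbb T_> u-\mathbb T_> v=\alpha\bigl(\mathcal R(\alpha,u)-\mathcal R(\alpha,v)\bigr).
\end{align*}
As in the proofs of Lemma \ref{con:fix1} and Lemma \ref{con:m1}, I will write the difference as $\int_0^1\frac{d}{ds}\mathcal R(\alpha,su+(1-s)v)\,ds$. This requires the segment $[u,v]$ to lie in $\mathbb H_{>,\alpha}$, which holds because $\mathbb H_{>,\alpha}$ is a disc and hence convex. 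The difference is then bounded by
\begin{align*}
|u-v|\sup_{w\in[u,v]}\bigl(|\partial_{c_r}\mathcal R(\alpha,w)|+|\partial_{c_i}\mathcal R(\alpha,w)|\bigr).
\end{align*}

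Next I insert the derivative bound from \eqref{est: R_1t}. On $\mathbb H_>$ we have $c_i=\operatorname{Im}K_m\,\alpha+O(\alpha^2|\log\alpha|^6)\approx\alpha$, because $\operatorname{Im}K_m>0$, and therefore $|\log c_i|\approx|\log\alpha|$. This gives $|\partial_c\mathcal R|\lesssim\alpha|\log\alpha|^5$ uniformly on the segment, and multiplying by the prefactor $\alpha$ yields
\begin{align*}
|\mathbb T_>u-\mathbb T_>v|\lesssim\alpha^2|\log\alpha|^5|u-v|.
\end{align*}
Since $\alpha^2|\log\alpha|^5\to0$, the constant is less than $1/2$ for $\alpha\in\mathbb G$ small. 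Combined with the self-map property recorded just before the lemma, this makes $\mathbb T_>$ a contraction.

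The step that needs care is the uniformity of \eqref{est: R_1t} on the whole disc $\mathbb H_{>,\alpha}$, and not only near its center. Two facts have to hold throughout the disc. First, $c_i$ must stay comparable to $\alpha$, which is guaranteed because the radius $S\alpha^2|\log\alpha|^6$ is much smaller than $\operatorname{Im}K_m\,\alpha$. Second, the logarithmic singularity of $\mathcal R$ at $c=c_*$, coming from its power series in $\log(-(c-c_*))$, must be kept away, which holds because $|c-c_*|\gtrsim\alpha$ on the disc. Under these two conditions, differentiating the series term by term loses at most one factor of $|\log c_i|$ together with a factor $|c-c_*|^{-1}\lesssim\alpha^{-1}$. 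The derivative bound is then consistent with $|\mathcal R|\lesssim\alpha|\log c_i|^6$, and I take \eqref{est: R_1t} as given from the earlier analysis.
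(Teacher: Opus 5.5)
Your argument is correct and is essentially the paper's proof: the terms $c_*+\alpha K_m$ cancel in \eqref{CR52t}, you write $\mathcal R(\alpha,u)-\mathcal R(\alpha,v)$ as the integral of its derivative along the segment (which stays in the convex disc), and \eqref{est: R_1t} then gives the Lipschitz constant $\alpha\cdot\alpha|\log\alpha|^5$. One caution about your closing heuristic: losing one power of $|\log c_i|$ and gaining a factor $|c-c_*|^{-1}\lesssim\alpha^{-1}$ from $\alpha|\log c_i|^6$ yields $|\log\alpha|^5$, not $\alpha|\log\alpha|^5$, so that paragraph does not reproduce \eqref{est: R_1t} and should be dropped or corrected; this is harmless only because you, like the paper, take \eqref{est: R_1t} as given (and the weaker bound would still give a contraction, with constant $\alpha|\log\alpha|^5$ rather than $\alpha^2|\log\alpha|^5$).
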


\begin{proof}
    
For all $ u,v\in \mathbb{H}_{>,\alpha}$, by \eqref{CR52t}, we have
\begin{equation}
\begin{aligned}
&({\mathbb{T}_{>}u}-\mathbb{T}_{>}v)
=\alpha \left(
\mathcal{R}(\alpha, u)-\mathcal{R}(\alpha, v)
\right).
\label{D13pt}
\end{aligned}
\end{equation}
By \eqref{est: R_1t}, we have
\begin{equation}
\begin{aligned}
\label{est: R1-1t}
&\big|\mathcal{R}(\alpha,u)-\mathcal{R}(\alpha,v)\big|\\
=& \big|\int_0^1 \frac{d}{ds}
\mathcal{R}(\alpha,su+(1-s)v)ds\big|\\
\lesssim &
\int_0^1 |u-v| \big(\big| \partial_{c_r}
\mathcal{R}(\alpha,su+(1-s)v)\big|
+\big| \partial_{c_i}
\mathcal{R}(\alpha,su+(1-s)v)\big|\big)ds\\
\lesssim & \alpha |\log \alpha |^5 |u-v|.
\end{aligned}
\end{equation}
From \eqref{D13pt} and \eqref{est: R1-1t}, we get
\beno
|{\mathbb{T}_{>}u}-\mathbb{T}_{>}v|\lesssim \alpha^2 |\log \alpha|^5|u-v|,
\eeno
which implies that $\mathbb{T}_{>}$ is a contraction map.
\end{proof}

\medskip

Now, we are in a position to prove Proposition \ref{con:dispmt}.

\underline{\it Proof of Proposition \ref{con:dispmt}.}
Denote $c_{(0)}\coloneqq c_*+\alpha 
K_m .$ It is easy to check $(\alpha,c_{(0)})\in \mathbb{H}_>$.  

Define 
\beno
c_{(j+1)}\coloneqq \mathbb{T}_{>}c_{(j)} \in \mathbb{H}_{>,\alpha}, j\geq 0.
\eeno
 This implies that $c_{(j+1)}\in \mathbb{H}_{>,\alpha}$ for any $j\geq 0$.

Moreover, by Lemma \ref{con:m1t}, we get
\begin{align*}
	&|{c_{(j+1)}}-c_{(j)}|\lesssim \alpha^2 |\log \alpha|^5 |c_{(j)}-c_{(j-1)}|, \quad \alpha^2 |\log \alpha|^5\ll 1,
\end{align*}
which means the sequence $\{c_{(j)}\}_{j=0}^{\infty}$ is convergent. 

Define
$$c\coloneqq {\lim_{j \to \infty}}c_{(j)} \in \mathbb{H}_{>,\alpha},$$
which solves the dispersion relation \eqref{CR55t}. Moreover, we have
 \begin{align*}
c=c_*+K_m\alpha+O(\alpha^2),
 \end{align*}
which implies that 
\begin{align*}
c_r=c_*+\operatorname{Re} K_m\alpha+O(\alpha^2)>0, \quad  c_i=\operatorname{Im} K_m\alpha+O(\alpha^2)>0.
\end{align*}

Here, we finish the proof.
 
\qed

Finally, we present the proof of Theorem \ref{thm:main1}.

\underline{\it Proof of Theorem \ref{thm:main1}.}
Propositions 6.1, 6.5, and 6.7 give, in the three respective cases,
$c=c(\alpha)$ satisfying the dispersion relation
\begin{align*}
W(\alpha,c(\alpha))=0,
\end{align*}
and the asymptotic formulas stated in the theorem. In particular,
$c_i(\alpha)>0$ for sufficiently small $\alpha>0$. 
Due to 
\begin{align*}
\rho
&=m^2A^{-1}
\bigl[U_s'\varphi-(U_s-c)\varphi'\bigr],\\
u
&=\varphi'-(U_s-c)\rho,\quad v
=-\mathrm{i}\alpha\varphi,
\end{align*}
we use the structural assumptions on $U_s$ and
Proposition~\ref{pro: varphi} to obtain
\begin{align*}
\rho,u&\in W^{1,\infty}(\mathbb R),
\qquad
v\in W^{2,\infty}(\mathbb R).
\end{align*}
These functions solve the system (1.3) and satisfy the boundary condition (1.4).
\qed

%
%
%

\section*{Acknowledgments}
C. Wang is partially supported by the NSF of China under Grant 12471189. Y. Wang is
partially supported by the NSF of China under Grant 12471200. Z. Zhang is partially supported by NSF of China under 12288101.

\end{document}